\theoremstyle{definition}
\newtheorem{definition}{Definition}[section]
\newtheorem{axiom}[definition]{Assumptions}
\newtheorem{remark}[definition]{Remark}
\newtheorem{notation}[definition]{Notation}
\theoremstyle{plain}
\newtheorem{theorem}[definition]{Theorem}
\newtheorem{proposition}[definition]{Proposition}
\newtheorem{lemma}[definition]{Lemma}
\numberwithin{equation}{section}
\def \alt96 {`}
\def \RN {\mathds{R}^N}
\def \R {\mathds{R}}
\def \N {\mathds{N}}
\def \G {\mathbb{G}}
\def \LL {{\mathcal{L}}}
\def \Ht {\mathcal{H}}
\def \d {\mathrm{d}}
\def \de {\partial}
\def \Lie {\mathrm{Lie}}
\def \LL {\mathcal{L}}
\begin{document}
 \title{Global Gaussian estimates for the heat kernel
 \\ of homogeneous sums of squares.}
 
 \author[S.\,Biagi]{Stefano Biagi}
 \author[M.\,Bramanti]{Marco Bramanti}
 
 \address{Dipartimento di Matematica
 \newline\indent Politecnico di Milano \newline\indent
 Via Bonardi 9, 20133 Milano, Italy}
 \bigskip
 
 \email{stefano.biagi@polimi.it}
 \email{marco.bramanti@polimi.it}

\subjclass[2010]
{ 35K65, 
  35K08, 
  35H10
  (primary);
  35C15, 
  35R03, 
  35K15.
}

\keywords{Heat kernel; Gaussian estimates; Homogeneous H\"{o}rmander
vector fields; Carnot\--Ca\-ra\-th\'{e}odory spaces; Cauchy problem; Harnack
inequality.}

\date{\today}

%%%%%%%%%%%%%%%%%%%%%%%%%%%%%%%%%%%%%%%%%%%%%%%%%%%%55
 \begin{abstract}
Let $\mathcal{H}=\sum_{j=1}^{m}X_{j}^{2}-\partial_{t}$
be a heat-type operator in $\mathbb{R}^{n+1}$, where 
$X=\{X_{1},\ldots,X_{m}\}$ is a system of smooth H\"{o}rmander's vector fields in
$\mathbb{R}^{n}$, and every $X_{j}$ is homogeneous of degree $1$ with respect
to a family of non-isotropic dilations in $\mathbb{R}^{n}$, while no
underlying group structure is assumed. In this paper we prove
global (in space and time) upper and lower Gaussian estimates for the heat
kernel $\Gamma(t,x;s,y)$ of $\mathcal{H}$, in terms of the
Carnot-Carath\'{e}odory distance induced by $X$ on $\mathbb{R}^{n}$, as well
as global upper Gaussian estimates for the $t$- or $X$-derivatives of any
order of $\Gamma$. 
From the Gaussian bounds we derive the unique 
solvability of the Cauchy problem for a possibly unbounded continuous initial datum satisfying exponential growth at infinity. Also, we study the sol\-va\-bi\-li\-ty of the $\Ht$-Dirichlet problem 
on an arbitrary bounded domain. 
Finally, we establish a global scale-invariant 
Harnack inequality for non-negative solutions of
$\mathcal{H}u=0$.
\end{abstract}
\maketitle

\section{Introduction}
Let us consider a family $X=\left\{  X_{1},\ldots,X_{m}\right\}  $ of smooth
H\"{o}rmander's vector fields in $\mathbb{R}^{n}$ (precise definitions will be
given later). The study of the cor\-re\-spon\-ding heat-type operator
$$
\mathcal{H}:=\sum_{j=1}^{m}X_{j}^{2}-\partial_{t}\qquad\text{on $\mathbb{R}^{n+1}$}
$$
and its fundamental solution (heat kernel) has a long history and, by now, a
vast literature. The study of operators of the kind \alt96
sum of squares of H\"{o}rmander's vector fields',
$\mathcal{L}=\sum_{j=1}^{m}X_{j}^{2}$, as well as their evolutive counterpart,
$\mathcal{H}=\mathcal{L}-\partial_{t}$, is usually characterized by the
following dichotomy: \medskip

-\,\,\emph{local} properties of H\"{o}rmander operators of the kind $\mathcal{L}$
or $\mathcal{H}$ have been established for \emph{general }families of
H\"{o}rmander's vector fields $X_{1},\ldots,X_{m}$ (some cornerstones in this
context are \cite{Hor}, \cite{RS}, \cite{NSW}, \cite{SC}, \cite{J},
\cite{JSC}), while \medskip

-\,\,\emph{global }properties of $\mathcal{L}$ or $\mathcal{H}$ have been
established almost exclusively when the vector fields $X_{1},\ldots,X_{m}$ are
left invariant on some Lie group. \medskip

\noindent In particular, starting with the famous paper \cite{Folland} by Folland, a
rich theory exists under the as\-sump\-tion that $X_{1},\ldots,X_{m}$ be both left
invariant with respect to some group of translations, and homogeneous with
respect to some family of dilations (hence, $X_{1},\ldots,X_{m}$ are the
generators of a Carnot group $\mathbb{G}$ in $\mathbb{R}^{n}$). In that
context, the heat kernel has the form
\begin{equation}
\Gamma(t,x;s,y)=\gamma(y^{-1}\ast x,t-s)  \label{Gamma gamma}
\end{equation}
with $\gamma$ satisfying a two-sided Gaussian bound:%
$$
\frac{1}{Ct^{Q/2}}\exp\left(  -\frac{C\left\Vert x\right\Vert ^{2}}{t}\right)
\leq\gamma\left(  x,t\right)  \leq\frac{C}{t^{Q/2}}\cdot\exp\left(
-\frac{\left\Vert x\right\Vert ^{2}}{Ct}\right)
$$
for every $x\in\mathbb{G}$, $t>0$. Here $Q$ is the homogeneous dimension of
the group, and $\left\Vert \cdot\right\Vert $ is a homogeneous norm in
$\mathbb{G}$. Analogous upper bounds hold for the derivatives of every order:%
$$
\left\vert \partial_{t}^{m}X_{I}\gamma\left(  x,t\right)  \right\vert
\leq\frac{C}{t^{\left(  Q+\left\vert I\right\vert +2m\right)  /2}}\cdot
\exp\left(  -\frac{\left\Vert x\right\Vert ^{2}}{Ct}\right)
$$
where $X_{I}=X_{i_{1}}X_{i_{2}}...X_{i_{k}}$ with $i_{1},...,i_{k}\in\left\{
1,2,...,m\right\}  $, and $\left\vert I\right\vert =k$. 
The above Gaussian
bounds on Carnot groups are a special case of the more general results proved
for heat kernels corresponding to left invariant, but not necessarily
homogeneous, H\"{o}rmander's vector fields, by Varopoulos, Saloff-Coste,
Coulhon in \cite{VSCC}. They proved that for heat kernels on
\emph{nilpotent }Lie groups, a context where one still has
\eqref{Gamma gamma}, the function $\gamma$ satisfies a two sided bound
\begin{equation}%
\begin{split}
\frac{1}{C\big\vert B_{X}(0,\sqrt{t})\big\vert} &  \exp\bigg(-\frac{Cd_{X}%
^{2}(x,0)}{t}\bigg)\leq\gamma(x,t)\\[0.1cm]
&  \qquad\leq\frac{C}{\big\vert B_{X}(0,\sqrt{t})\big\vert}\exp\bigg(-\frac
{d_{X}^{2}(x,0)}{Ct}\bigg),
\end{split}
\label{gaussian SCSS}%
\end{equation}
and an upper bound on derivatives of every order:%
\begin{equation}
|\partial_{t}^{m}X_{I}\gamma(x,t)|\leq\frac{C}{\big\vert B_{X}(0,\sqrt
{t})\big\vert\,t^{(|I|+2m)/2}}\exp\bigg(-\frac{d_{X}^{2}(x,0)}{Ct}%
\bigg),\label{upper der SCSS}%
\end{equation}
where $d_{X}$ is the control distance induced by $X_{1},...,X_{m}$ and
$B_{X}(0,r)$ the cor\-re\-spon\-ding balls (see \cite[Thm. IV.4.2, Thm.
IV.4.3]{VSCC}). Also, they proved that on \emph{unimodular Lie groups with
polynomial volume growth}, that is satisfying
\[
c_{1}t^{D}\leq|B_{X}(0,\sqrt{t})|\leq c_{2}t^{D}\qquad\text{for $t\geq1$}%
\]
and some $D>0$, the above results \eqref{gaussian SCSS} and
\eqref{upper der SCSS} still hold (see \cite[Thm. VIII.2.7, Thm. 8.2.9]%
{VSCC}). For a different approach to Gaussian estimates in the context of Lie
groups with polynomial growth, see also the monograph \cite{DER} by Dungey,
ter Elst, Robinson. For the special case of Gaussian estimates on Carnot
groups, that we will explicitly exploit in this paper, we refer to the more
recent paper \cite{BLUpaper} by Bonfiglioli, Lanconelli, Uguzzoni. 

For a general system of H\"{o}rmander's vector fields, i.e., with \emph{no
un\-der\-ly\-ing group structure}, Gaussian bounds for the heat kernel
$$
\Gamma(t,x;s,y)=\gamma(t-s,x,y)
$$
have been proved by Jerison-Sanchez Calle \cite[Thms. 2, 3, 4]{JSC} in the
form:
\begin{align}
&  \frac{1}{C\big\vert B_{X}(x,\sqrt{t})\big\vert}
\exp\bigg(-\frac{Cd_{X}^{2}(x,y)}{t}\bigg) \leq
\gamma(t,x,y)
\leq\frac{C}{\vert B_{X}(x,\sqrt{t})\big\vert}%
\exp\bigg(-\frac{d_{X}^{2}(x,y)}{Ct}\bigg)  \label{Gaussian JS} \\[0.2cm]
&  \left\vert \partial_{t}^{m}X_{I}^{x}X_{J}^{y}\gamma(t,x,y)\right\vert
\leq\frac{C}{\left\vert B_{X}\left(  x,\sqrt{t}\right)  \right\vert t^{\left(
\left\vert I\right\vert +\left\vert J\right\vert +2m\right)  /2}}\exp\left(
-\frac{d_{X}\left(  x,y\right)  ^{2}}{Ct}\right)  \label{upper der JS}%
\end{align}
for every multiindices $I,J$, with $x,y$ ranging in a compact set and
$t\in(0,T)$. Using probabilistic techniques, Kusuoka-Stroock
have extended the above re\-sults to $x,y$ in $\mathbb{R}^{N}$ and $t\in\left(
0,T\right)  $, in \cite{KS1}, and later to $x,y$ in $\mathbb{R}^{N}$ and $t>0$
in \cite{KS2}. However, Kusuoka-Stroock require that the coefficients of the
vector fields belong to $C_{b}^{\infty}\left(  \mathbb{R}^{N}\right)  $. For
instance, vector fields with polynomial coefficients are not covered by their
theory (at least as far as global results are concerned). Related results
(under the same $C_{b}^{\infty}\left(  \mathbb{R}^{N}\right)  $ assumptions on
the vector fields) have been proved by L\'{e}andre in \cite{Le1}, \cite{Le2}.
Davies in \cite{D} has improved the constant in the exponent of the upper
bound in \eqref{Gaussian JS}, for a system of H\"{o}rmander's vector fields on
a compact manifold. \medskip

On the other hand, a general setting which allows to develop an interesting \emph{global theory},
with\-out assuming the existence of a group of translations, and allowing
un\-bound\-ed\-ness of the coefficients of $X_{1},...,X_{m}$ and their derivatives,
is that of H\"{o}rmander vector fields which are only assumed to be
$1$-homogeneous with respect to a family of non-isotropic dilations of the
form
$$
\delta_{\lambda}(x):=(\lambda^{\sigma_{1}}x_{1},\ldots,\lambda^{\sigma_{n}}x_{n}),
$$
where $1=\sigma_{1}\leq\ldots\leq\sigma_{n}$ are positive integers. In other
words,
$$
X_{j}(u\circ\delta_{\lambda})=(X_{j}u)\circ\delta_{\lambda}%
$$
for every $j=1,\ldots,m$, every $u\in C^{\infty}(\mathbb{R}^{n})
$ and every $\lambda>0$. Under this assumption (without any underlying group
structure), Biagi-Bonfiglioli in \cite{BiagiBonfiglioliLast} have built a
global homogeneous fundamental solution for $\mathcal{L}=\sum_{j=1}^{m}%
X_{j}^{2}$ and have studied some of its properties. The idea of this
construction is that, ac\-cor\-ding to a procedure originally devised by Folland
in \cite{Folland2} and adapted in \cite{BiagiBonfiglioliLast}, a system of
$1$\--ho\-mo\-ge\-neous H\"{o}rmander's vector fields can always be lifted to a higher
dimensional Carnot group where the corresponding sum of squares is known to
possess a global, left invariant, homogeneous fundamental solution. Saturating
this fundamental solution with respect to the added variables, in
\cite{BiagiBonfiglioliLast} a homogeneous fundamental solution for the
original operator is produced. More e\-xpli\-cit e\-sti\-ma\-tes for this kernel have
been established in \cite{BiBoBra1}, in terms of the distance
induced by the vector fields. The general strategy of
\cite{BiagiBonfiglioliLast} has been later 
implemented in \cite{BBHeat} for heat
operators corresponding to $1$-homogeneous vector fields, showing the
existence of a global, homogeneous, heat kernel, obtained by saturating the
heat kernel of a higher dimensional operator living on a Carnot group.

The aim of this paper is to prove sharp global explicit Gaussian estimates for
this heat kernel, in terms of the intrinsic distance induced by the vector
fields. More precisely, we will prove Gaussian estimates 
\eqref{Gaussian JS}-\eqref{upper der JS} for every $x,y\in\mathbb{R}^{n}$ and $t>0$, for heat
operators corresponding to $1$\--ho\-mo\-ge\-neous (but not left invariant)
H\"{o}rmander's vector fields (see Theorem \ref{thm.mainGaussianGamma}). 

Our global Gaussian bounds in particular allow to improve known results about
the Cauchy problem for this heat operator. In \cite[Thm. 4.1]{BBHeat} it is
proved that for every bounded continuous initial datum $f$ there exists one
and only one bounded solution to the Cauchy problem. We will prove that a
solution to the Cauchy problem actually exists, at least for small times, as
soon as the initial datum $f$ satisfies a growth condition of the kind
$$
\int_{\mathbb{R}^{n}}\vert f(y)\vert\exp
\big(-\mu\,d_{X}^{2}(y,0)\big)\,\d y<+\infty
$$
for some constant $\mu>0$. The solution is unique in the class of functions
sa\-ti\-sfy\-ing a condition
$$
\int_{0}^\tau\!\!\!\int_{\R^n}\exp\big(-\delta d_{X}^{2}(x,0)\big)
|u(t,x)|\,\d t\,\d x<+\infty
$$
for some $\delta>0$. Moreover, if $f$ satisfies a stronger bound of the kind
$$
 \int_{\mathbb{R}^{n}}\vert f(y)\vert 
 \exp\big(-\mu d_{X}^{\alpha}(y,0)\big)\,\d y < +\infty
 \qquad\text{for some $\alpha\in(0,2)$},
$$
 then the solution exists for all $t>0$
 (see Theorem
 \ref{thm.mainExistenceCauchy} and Proposition \ref{prop.growthfless2}).
 In Section \ref{sec.appDirichlet} we shall present an application
 of our global Gaussian estimates to the study of the
 $\Ht$-Dirichlet problem. In fact, by crucially exploiting these estimates,
 we shall show that it is possible to apply to our operators $\Ht$ the axiomatic
 approach developed in the series of papers
 \cite{Kogoj, LancoUguz, LancTralUguz, TralliUguz}; this will lead to
 some necessary and sufficient conditions
 for the regularity of boundary points of \emph{any} bounded open set $\Omega$. 
 Finally, in the last part of the paper we will prove a scale-invariant
 parabolic Harnack inequality for non-negative solutions of $\mathcal{H}u=0$
 (see Theorem \ref{thm.Harnack} in Section \ref{sec.Harnack}).  \medskip
 
We close this introduction with a few remarks about some related fields of
research. Gaus\-sian bounds for heat kernels have been studied, besides the
Eu\-cli\-dean setting, in the context of Riemannian manifolds. We can quote under
this respect the well-known paper \cite{LY} by Li-Yau where Gaussian bounds
are proved on manifolds with nonnegative curvature (see also the monograph
\cite{Gri} by Gri\-go\-r'yan and the references therein). Some extensions of these
geometric techniques to sub-Riemannian manifolds have been done, see e.g. the
paper \cite{BBG} by Baudoin, Bonnefont, Garofalo. Gaussian bounds have been
studied also in the abstract context of Dirichlet forms, see e.g. the papers
\cite{St2}, \cite{St3} by Sturm. These researches have made apparent a general
relation existing between the validity of Gaussian bounds for the heat kernel,
the validiy of global forms of Poincar\'{e}'s inequality and doubling
condition, and the validity of a parabolic Harnack inequality. For a
discussion of these general relations see also the monograph \cite{SalC} by
Saloff-Coste. In the context of homogeneous H\"{o}rmander vector fields
studied in the present paper, global forms of Poincar\'{e}'s inequality and
doubling condition are known, after \cite{BiBoBra1}. Therefore, our results
about Gaussian bounds and Harnack inequality are not unexpected. 
Nevertheless, we have not been able to find in the literature a precise theorem, directly
applicable to our context, implying our results. As far as we know, this is
the first case of global (in space and time) Gaussian estimates explicitly
proved, for
both the heat kernel and its derivatives of every order, in the context of
H\"{o}rmander's vector fields (with possibly unbounded coefficients) in
absense of an underlying group structure. 

\section{Assumptions and statements  of Gaussian bounds}
We denote by $\mathcal{X}(\mathbb{R}^{n})$ the Lie algebra of the smooth
vector fields on $\mathbb{R}^{n}$ (with $n\geq2$). Given a set $X\subseteq
\mathcal{X}(\mathbb{R}^{n})$, we indicate by $\mathrm{Lie}(X)$ the smallest
Lie sub-algebra of $\mathcal{X}(\mathbb{R}^{n})$ containing $X$. Finally, if
$Z\in\mathcal{X}(\mathbb{R}^{n})$ is a smooth vector field of the form
$$
Z=\sum_{j=1}^{n}a_{j}(x)\frac{\partial}{\partial x_{j}}\qquad
\text{ for some $a_{1},\ldots,a_{n}\in C^{\infty}(\mathbb{R}^{n})$}
$$
and if $x\in\mathbb{R}^{n}$, we denote by $Z(x)$ the
vector $(a_{1}(x), \ldots, a_n(x))\in\mathbb{R}^n$.
\begin{axiom}
\label{Assumptions}
Let $X=\{X_{1},\ldots,X_{m}\}$ {(}with $m\geq2${)} be a fixed
family of \emph{linearly independent} smooth vector fields in 
Euclidean space $\mathbb{R}^{n}$
sa\-ti\-sfy\-ing the fol\-lowing struc\-tu\-ral assumptions:

\begin{description}
\item[(H1)] there exists a family $\{\delta_{\lambda}\}_{\lambda>0}$ of
non-isotropic dilations of the form
\begin{equation} \label{eq.defidela}%
\delta_{\lambda}(x):=(\lambda^{\sigma_{1}}x_{1},\ldots,\lambda^{\sigma_{n}}x_{n}), 
\end{equation}
where $1=\sigma_{1}\leq\ldots\leq\sigma_{n}$ are positive integers, with
respect to which $X_{1},\ldots,X_{m}$ are homogeneous of degree $1$. This
means that
\begin{equation} \label{1-homoge}%
X_{j}(u\circ\delta_{\lambda})=(X_{j}u)\circ\delta_{\lambda} 
\end{equation}
for every $j=1,\ldots,m$, every $u\in C^{\infty}\left(  \mathbb{R}^{n}\right)
$ and every $\lambda>0$. We define the $\delta_{\lambda}$\--\emph{ho\-mo\-ge\-neous
dimension }of $\mathbb{R}^{n}$ as
\begin{equation} \label{eq.defiqH1}%
q:=\sum_{j=1}^{m}\sigma_{j}. 
\end{equation}
Note that $q\geq n.$

\item[(H2)] $X_{1},\ldots,X_{m}$ satisfy H\"{o}rmander's rank condition at
$x=0$, that is,
\begin{equation} \label{eq.assumptionHormander}%
\mathrm{dim}\big\{  Y(0):\,Y\in\mathrm{Lie}(X)\big\}  =n.
\end{equation}
\end{description}
\end{axiom}

\begin{remark} \label{rem.generalpropHt}
By combining assumptions {(H1)} and 
{(H2)}, it is not
difficult to recognize that H\"{o}r\-man\-der's rank condition is actually
satisfied \emph{at every point $x\in\mathbb{R}^{n}$}, that is,
$$
\dim\big\{Y(x):\,Y\in\operatorname{Lie}(X)\big\}
=n \qquad \text{for all $x\in\mathbb{R}^{n}$}
$$
(this is proved in \cite[Remark 3.2]{BiBoBra1}). Thus, by H\"{o}rmander's
Hypoellipticity Theo\-rem (see \cite{Hor}), both the operators $\mathcal{L}$ and
$\mathcal{H}$ are $C^{\infty}$-hypoelliptic in every open subset of
$\mathbb{R}^{n}$.
\end{remark}

In order to state our result, we first recall the following standard

\begin{definition}
[Carnot-Carath\'{e}odory distance]Let $Y=\{Y_{1},\ldots,Y_{h}\}$ be a family
of \emph{smooth} vector fields defined on some space $\mathbb{R}%
^{k}$. We assume that the $Y_{j}$'s satisfy H\"{o}rmander's rank condition at
every point of $\mathbb{R}^{k}$. The \emph{Car\-not\--Ca\-ra\-th\'{e}odory}
{(}CC, shortly{)} 
\emph{distance associated with $Y$} is defined as
$$
d_{Y}(x,y)=\inf\big\{r>0:\,\text{there exists $\gamma\in C(r)$ with
$\gamma(0)=x$ and $\gamma(1)=$}y\big\}  ,
$$
where $C(r)$ is the set of the absolutely continuous curves $\gamma
:[0,1]\rightarrow\mathbb{R}^{k}$ sa\-ti\-sfy\-ing 
{(}a.e.\,on $[0,1]${)}
$$
\gamma^{\prime}(t)=\sum_{j=1}^{h}a_{j}(t)\,Y_{j}(\gamma(t)),
\qquad\text{with $|a_{j}(t)| \leq r$ for all $j=1,\ldots,h$}.$$
We will denote by $B_{Y}\left(  x,\rho\right)  $ the metric ball $\left\{
y\in\mathbb{R}^{k}:d_{Y}(x,y)<\rho\right\}  .$
\end{definition}

Well-known results assure that under the above assumptions $d_{Y}(x,y)$ is
finite for every couple of points in $\mathbb{R}^{k}$ and that $(\mathbb{R}^{k},d_{Y})$ 
is a metric space; moreover, $d_{Y}$ is topologically, but not metrically,
equivalent to the Euclidean distance. \medskip

We can now state our main result:

\begin{theorem} \label{thm.mainGaussianGamma}
Let $X=\{X_{1},\ldots,X_{m}\}$ be a family of smooth
vector fields in $\mathbb{R}^{n}$ satisfying As\-sump\-tions \ref{Assumptions},
and
let $\mathcal{H}$ the heat-type operator
\begin{equation} \label{eq.defiHt}%
\mathcal{H}:=\mathcal{L}-\partial_{t}=
\sum_{j=1}^{m}X_{j}^{2}-\partial_{t}
\qquad\text{on $\mathbb{R}^{1+n}=\mathbb{R}_{t}\times\mathbb{R}_{x}^{n}$}.
\end{equation}
Moreover, let $\Gamma(t,x;s,y):=\gamma(t-s,x,y)$ be the global heat kernel of
$\mathcal{H}$, that will be precisely defined in
\eqref{defiGammaesplicitinaPRE1}. Then, the following facts hold.
\begin{itemize}
 \item[\emph{(i)}] 
 There exists a constant $\varrho>1$ such that
 \begin{equation} \label{eq.GuassianGamma}%
\begin{split}
&  \frac{1}{\varrho\,\vert B_{X}(x,\sqrt{t})\vert}\,\exp\bigg(
-\frac{\varrho\,d_{X}^{2}(x,y)}{t}\bigg)\leq\gamma(t,x,y)\\[0.2cm]
&  \qquad\quad\leq\frac{\varrho}{\vert B_{X}(x,\sqrt{t})\vert}\,
\exp\bigg(-\frac{d_{X}^{2}(x,y)}{\varrho\,t}\bigg),
\end{split}
\end{equation}
for every $x,y\in
\mathbb{R}^{n}$ and every $t>0$.

\item[\emph{(ii)}] For any nonnegative integers $k,r$ there exists $C
= C_{k,r}>0$ such
that%
\begin{equation} \label{Gassian_derivatives}%
\bigg\vert \bigg(\frac{\partial}{\partial t}\bigg)^{k}Y_{1}\cdots
Y_{r}\gamma(t,x,y)\bigg\vert \leq C\,\frac{t^{-(k+r/2)}}
{\vert B_{X}(x,\sqrt{t})\vert }\,\exp\bigg(-\frac{d_{X}^{2}(x,y)}{Ct}\bigg),
\end{equation}
for every choice of vector fields $Y_{1},\ldots,Y_{r}\in\{  X_{1}^{x},\ldots,X_{m}%
^{x},X_{1}^{y},\ldots,X_{m}^{y}\}$, and every choice of $x,y\in\mathbb{R}^{n}, t>0$.
\end{itemize}
\end{theorem}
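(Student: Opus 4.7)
The plan is to exploit the explicit construction of $\gamma$ recalled in \eqref{defiGammaesplicitinaPRE1} and due to \cite{BBHeat}: the operator $\mathcal{H}$ lifts to a heat-type operator $\widetilde{\mathcal H}=\sum_{j=1}^{m}\widetilde X_{j}^{\,2}-\partial_{t}$ on a Carnot group $\widetilde{\mathbb G}$ of dimension $N>n$, whose defining vector fields $\widetilde X_{j}$ project onto $X_{j}$. The heat kernel $\widetilde\gamma$ of $\widetilde{\mathcal H}$ is left-translation invariant and $\delta_{\lambda}$-homogeneous on $\widetilde{\mathbb G}$; consequently, by the Carnot-group theory recalled in the introduction (in particular \cite{BLUpaper}) it satisfies two-sided Gaussian bounds in terms of $d_{\widetilde X}$ and $|B_{\widetilde X}|$, together with upper bounds of the same form on all of its $\partial_{t}$- and $\widetilde X$-derivatives. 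Formula \eqref{defiGammaesplicitinaPRE1} expresses $\gamma(t,x,y)$ as the saturation integral of $\widetilde\gamma(t,(x,0),(y,h))$ against the added fiber variables $h$. Every estimate in Theorem~\ref{thm.mainGaussianGamma} is obtained by inserting the Carnot-group bounds into this saturation formula.

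For the upper bound in (i) I would plug the Gaussian upper bound for $\widetilde\gamma$ into the saturation integral. Two comparison results are then needed. The first is a distance inequality of the form $d_{\widetilde X}((x,0),(y,h))^{2}\geq d_{X}(x,y)^{2}+c\,\rho(h,h_{x,y})^{2}$, where $h_{x,y}$ is a distinguished fiber point and $\rho$ is a fiber quasi-distance; it allows $\exp(-d_{X}^{2}(x,y)/(\varrho t))$ to be factored outside the integral and follows from the fact that the projection $(z,h)\mapsto z$ is a contraction for the CC-distances. The second is a volume identity linking $|B_{\widetilde X}((x,0),\sqrt t)|$, $|B_{X}(x,\sqrt t)|$ and a fiber Jacobian of scale $\sqrt t$; this is the analytic content of the lifting/saturation procedure and is available in our setting thanks to the global doubling of $|B_{X}(x,\cdot)|$ established in \cite{BiBoBra1}. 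Combining the two, the remaining $h$-integral reproduces exactly the fiber volume, which cancels the Carnot-group denominator and leaves $\varrho/|B_{X}(x,\sqrt t)|$ multiplied by the desired Gaussian factor.

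The lower bound in (i) is the main obstacle, because integrating pointwise lower bounds in general does not preserve the correct volume normalization. I would proceed by restricting the saturation integral to the set of fibers $h$ such that $(y,h)$ lies in the $d_{\widetilde X}$-ball of radius $C\sqrt t$ around the distinguished lift $h_{x,y}$. On this set the triangle inequality gives $d_{\widetilde X}((x,0),(y,h))\leq C(d_{X}(x,y)+\sqrt t)$, so that the Carnot-group lower Gaussian bound for $\widetilde\gamma$ is active and produces a uniform Gaussian in $d_{X}(x,y)/\sqrt t$. The Lebesgue measure of this admissible $h$-slice, divided by $|B_{\widetilde X}((x,0),\sqrt t)|$, yields $c/|B_{X}(x,\sqrt t)|$ via the same volume identity used in the upper bound, together with the global doubling of $|B_{X}(x,\cdot)|$ from \cite{BiBoBra1}. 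Homogeneity (H1) is crucial here, as it forces all the objects involved to scale consistently for every $t>0$ and every $x,y\in\mathbb R^{n}$.

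Part (ii) follows from the same scheme. Since $\widetilde X_{j}$ projects to $X_{j}$ and $\partial_{t}$ commutes with the $h$-integration, the derivative $\partial_{t}^{k}Y_{1}\cdots Y_{r}\gamma$ coincides with the saturation of the corresponding derivative of $\widetilde\gamma$, which by the Carnot-group theory satisfies a Gaussian upper bound with the extra factor $t^{-(k+r/2)}$. Repeating verbatim the integration argument of the upper bound in (i) then produces \eqref{Gassian_derivatives}. I expect the distance/volume comparison on the lift, and in particular the construction of the distinguished fiber point $h_{x,y}$ with the required two-sided properties, to be the technical core of the whole proof; once it is in place, both (i) and (ii) reduce to essentially mechanical integrations against Gaussian densities.
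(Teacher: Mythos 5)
Your plan captures the overall architecture the paper actually uses: saturate the Carnot-group heat kernel $\gamma_{\mathbb{G}}$ over the added fiber variables, feed in the known two-sided Gaussian bounds and derivative bounds from \cite{BLUpaper}, and convert the resulting expression into $|B_X(x,\sqrt t)|^{-1}\exp(-d_X^2(x,y)/(\varrho t))$ by a volume-comparison argument. That is indeed how the paper proceeds: it first reduces (via Theorem~\ref{ThmC} and Remark~\ref{Remark switched Gaussian}) to proving the two-sided bound
\[
t^{-Q/2}\int_{\mathbb{R}^p}\exp\Big(-\tfrac{\|(x,0)^{-1}\ast(y,\eta)\|^2}{t}\Big)\,\d\eta
\;\approx\;\frac{1}{|B_X(x,\sqrt t)|}\,\exp\Big(-\tfrac{d_X^2(x,y)}{t}\Big),
\]
in Propositions~\ref{prop.upperestimateGamma} and~\ref{prop.lowerestimateGamma}. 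So far so good.

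However, your execution has three concrete gaps. First, your proposed distance decomposition $d_{\mathcal{Z}}((x,0),(y,\eta))^2 \geq d_X(x,y)^2 + c\,\rho(\eta,\eta_{x,y})^2$ is neither established in the paper nor obviously true; the paper uses only the much weaker projection inequality $d_X(x,y)\leq d_{\mathcal{Z}}((x,\xi),(y,\eta))$ from Proposition~\ref{Prop properties distances}(iii). The actual mechanism for the upper bound is a dyadic decomposition of the $\eta$-integral into annuli $A_n=\{\eta:2^n a\leq\|(x,0)^{-1}\ast(y,\eta)\|<2^{n+1}a\}$ (with $a=\max\{d_X(x,y),\sqrt t\}$), followed by a term-by-term estimate of $|A_n|$ via the \emph{global Sanchez-Calle fiber-slice estimate} (Theorem~\ref{thm.globalSanchez}), and then a convergent geometric sum. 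Your ``volume identity'' is essentially this Sanchez-Calle comparison, but it does not follow from doubling of $|B_X(x,\cdot)|$ alone; it is the separate deep input from \cite{BiBoBra1} that makes the cancellation work. Without replacing your hypothetical distance inequality by the annuli decomposition plus the Sanchez-Calle estimate, the ``remaining $h$-integral reproduces exactly the fiber volume'' step is not justified.

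Second, for the lower bound you correctly identify the construction of a distinguished fiber point as the technical core, but you leave it unaddressed. The paper supplies it in Lemma~\ref{lem.metricoReverseSanchez}: given $b>a>d_X(x,y)$, one finds $\overline{\eta}\in\mathbb{R}^p\setminus\{0\}$ with $d_{\mathcal{Z}}((x,\xi),(y,\overline{\eta}))=\tfrac{a+b}{2}$, so that the annulus $\{a\leq d_{\mathcal{Z}}((x,\xi),(y,\eta))<b\}$ contains the entire slice $B_{\mathcal{Z}}((y,\overline\eta),\tfrac{b-a}{2})$. This uses a continuity-plus-dilations argument: one lifts $y$ into the ball, rescales the fiber component via $\delta^{\ast}_\lambda$, and invokes $1$-homogeneity to show $d_{\mathcal{Z}}((x,\xi),(y,\delta^{\ast}_\lambda(\eta_0)))\to\infty$ as $\lambda\to\infty$, then applies the intermediate value theorem. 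Your sketch does not give a recipe for $\eta_{x,y}$ with the two-sided property you need.

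Third, for part (ii) your claim that ``the derivative $\partial_t^k Y_1\cdots Y_r\gamma$ coincides with the saturation of the corresponding derivative of $\widetilde\gamma$'' is correct for pure $x$-derivatives and pure $y$-derivatives, but the mixed case $X^y_{j_1}\cdots X^y_{j_k}X^x_{i_1}\cdots X^x_{i_h}\gamma$ is subtler. The representation formula (Theorem~\ref{Thm bbHeat}, equation~\eqref{derGammatutte}) involves the quantity $Z_{j_1}\cdots Z_{j_k}\big((Z_{i_1}\cdots Z_{i_h}\gamma_{\mathbb{G}})\circ\widetilde\iota\big)$, and composition with the inversion $\iota(u)=u^{-1}$ turns the left-invariant $Z_i$'s into vector fields $\widetilde Z_i$ that are only $1$-homogeneous, not left-invariant. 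The BLU-type derivative bounds from \cite{BLUpaper} apply only to left-invariant vector fields, so the paper has to establish an extension (Lemma~\ref{Lemma der homog} and Propositions~\ref{BLU estimates homogeneous},~\ref{Prop homogeneous BLU}) showing that these Gaussian derivative bounds continue to hold for $1$-homogeneous, non-left-invariant vector fields. Your proposal does not notice this issue, and the ``verbatim'' repetition of the integration argument would fail for mixed derivatives without this additional step.
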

 The results about the Cauchy problem for $\mathcal{H}$
 will be stated and proved in Section
 \ref{sec.Cauchy}, while our scale-invariant Harnack inequality 
 will be stated
 and proved in Section \ref{sec.Harnack}.

\section{Preliminaries and known results\label{sec.preliminari}}
\subsection{Carnot groups, lifting and construction of the heat kernel for $\mathcal{H}$}
We begin by recalling the definition of homogeneous Carnot group and some related notions
(see, e.g., \cite{BLUlibro} for an exhaustive treatment of this topic).\medskip

We say that $\mathbb{G}=(\mathbb{R}^{N},\ast,D_{\lambda})$ is a
\emph{homogeneous group }if $(\mathbb{R}^{N},\ast)$ is a Lie group (with group identity $e = 0$)
and if there exists a one-parameter family of group automorphisms $\{  D_{\lambda}\}_{\lambda>0}$ 
acting as in (H1).  We shall call the Lie group operation $\ast$ \emph{\alt96
translation'} and the automorphisms $D_{\lambda}$
\emph{\alt96 dilations'}. 

We say that a smooth vector field $X$ is \emph{left invariant} if, for every
$f\in C^\infty(\R^N)$, we have 
$$X\big(x\mapsto f(y\ast x)\big)
=(Xf)(y\ast x)\qquad\text{for all $x,y\in\mathbb{G}$}.$$
For $i=1,2,\ldots,n$, let $X_{i}\ $be the only left invariant vector field which
agrees at the origin with $\partial_{x_{i}}$. Assume that for some positive
integer $m<N$ we have that $X_{1},...,X_{m}$ are $1$-homogeneous (in the sense
of \eqref{1-homoge}) and that $X_{1},\ldots,X_{m}$ satisfy H\"{o}rmander's condition
as in (H2) (at the origin and then, by left invariance, at every point).
Then we say that 
$$\text{$\mathbb{G}$ is a \emph{Carnot group} and 
$X_{1},...,X_{m}$
are its \emph{generators}}.$$
A continuous function $\|\cdot\|:
\mathbb{G}\rightarrow [0,+\infty)$ is called a \emph{homogeneous norm} on $\mathbb{G}$ if there
exists $c>0$ such that, for every $u,v\in\mathbb{G}$, the following hold:
\begin{itemize}
 \item[(i)] $\|u\| =0$ if and only if $u = 0$;
 \item[(ii)] $\|D_{\lambda}(u)\|=\lambda\| u\|$ for every $\lambda>0$;
 \item[(iii)] $\|u\ast v\| \leq c\big(\|u\| +\|v\|\big)$;
 \item[(iv)] $\|u^{-1}\| \leq c\|u\|.$
\end{itemize}
If $X=\{X_{1},\ldots,X_{m}\}$ are the generators of a Carnot group
$\mathbb{G}$ and $d_{X}$ the Carnot-Carath\'{e}odory distance associated with
$X$, then $\|u\| =d_{X}(u,0)  $ is a
homogeneous norm on $\mathbb{G}$, further satisfying properties (iii)-(iv) with $c=1$.
\bigskip

A key information for the study of the operator $\mathcal{H}$ (and of its
associated heat kernel) is the dimension of the Lie algebra $\mathfrak{a}%
:=\mathrm{Lie}(X)$. Under our assumptions (H1)-(H2), 
it is easy to see that $\mathfrak{a}$ has finite
dimension: in fact, using \cite[Theorem A.11]{BBBook} and 
\cite[Proposition 1.3.10]{BLUlibro}), 
one has
$$
\mathfrak{a}=\bigoplus\limits_{k=1}^{\sigma_{n}}
\mathfrak{a}_{k}
$$
where $\mathfrak{a}_{1}:=\mathrm{span}\{X\} = \mathrm{span}\big\{X_{1},\ldots,X_{m}\big\}$
and
$$
\mathfrak{a}_{k}:=\mathrm{span}\{[Y,Z]:\,Y\in\mathfrak{a}_{1},\,Z\in\mathfrak{a}_{k-1}\}
\qquad (\text{for $k\geq 2$}).
$$
In particular, we obtain
\begin{equation} \label{eq.generalNgeqn}%
N=\mathrm{dim}(\mathfrak{a})\geq\mathrm{dim}\big\{Y(0):\,Y\mathfrak{a}\big\} =n. 
\end{equation}
As a consequence of \eqref{eq.generalNgeqn}, only the following two cases can occur.
\begin{itemize}
\item[(i)] $N=n$. In this case, by taking into account the $\delta_{\lambda}%
$\--ho\-mo\-ge\-nei\-ty of $X_{1},\ldots,X_{m}$, we can apply some
results in \cite{BonfLanc}, ensuring the existence of an operation $\ast$ on
$\mathbb{R}^{n}$ such that%
$$
\text{$\mathbb{F}=(\mathbb{R}^{n},\ast,\delta_{\lambda})$ is a homogeneous
Carnot group with $\operatorname{Lie}(\mathbb{F})=\mathfrak{a}$.}
$$
Hence, the vector fields $X_{1},\ldots,X_{m}$ are left invariant on
$\mathbb{F}$, and the operator $\mathcal{H}$ becomes the \emph{canonical heat
operator} on $\mathbb{R}\times\mathbb{F}$. This is a well-studied scenario, in
which all the results of this paper are well-known (see, for example, \cite{BLUpaper}).

\item[(ii)] $N>n$. In this case, instead, we derive from 
\cite[Theorem 1.4]{BBConstruction} that \emph{there cannot exist} any Lie-group structure in
$\mathbb{R}^{n}$ with respect to which $X_{1},\ldots,X_{m}$ are left
invariant. In particular, the operator $\mathcal{H}$ \emph{is not} a canonical
heat operator on some Carnot group.
\end{itemize}

In view of the above discussion, throughout the sequel, in the proof of our
results, we also make the following `dimensional' assumption.

\begin{description}
\item[(H3)] Using the notation $\mathfrak{a}=\operatorname{Lie}(X)$ and
$N=\dim(\mathfrak{a})$, we assume that
\begin{equation} \label{eq.assDima}%
p:=N-n\geq1. 
\end{equation}
\end{description}

\begin{remark}
Note that condition {(H3)} is not a further assumption that we require in order
for our results to be true. It is a further condition that is not restrictive
to assume within the proofs, because if our Assumptions \ref{Assumptions} hold
and {(H3)} is \emph{not} true, then our Theorem \ref{thm.mainGaussianGamma} is
already known.
\end{remark}

Even if assumption (H3) implies that $X_{1},\ldots,X_{m}$ cannot be left
invariant with respect to any Lie-group structure in $\mathbb{R}^{n}$, it is
proved in \cite{BiagiBonfiglioliLast} that the $X_{j}$'s can be lifted (in a
suitable sense) to vector fields $Z_{1},\ldots,Z_{m}$ which are left invariant
on a \emph{higher-dimensional} Carnot group:

\begin{theorem} [{Lifting, see {\cite[Theorem 3.1]{BiagiBonfiglioliLast}}}]\label{ThmA}
Let us suppose that as\-sump\-tions \emph{(H1)}-to-\emph{(H3)} are satisfied. Then, it is possible to
construct a homogeneous Carnot group 
$\mathbb{G}=(\mathbb{R}^{N},\ast,D_{\lambda})$ satisfying the following properties:

\begin{enumerate}
\item $\mathbb{G}$ has $m$ generators;

\item denoting the points of $\mathbb{R}^{N}$ as 
$u=(x,\xi)\in\mathbb{R}^{n}\times\mathbb{R}^{p}$, the family 
of dilations
$\{D_{\lambda}\}_{\lambda>0}$ takes the following `lifted' form:
\begin{equation}
D_{\lambda}(u)=D_{\lambda}(x,\xi)=\left(  \delta_{\lambda}(x),\delta_{\lambda
}^{\ast}(\xi)\right)  , \label{eq.expressionDlambda}%
\end{equation}
where $\delta_{\lambda}^{\ast}(\xi)=(\lambda^{\tau_{1}}\xi_{1},\ldots
,\lambda^{\tau_{p}}\xi_{p})$ for some integers $1\leq\tau_{1}\leq
\ldots\leq\tau_{p}$;

\item there exists a system of Lie-generators 
$\mathcal{Z}=\{Z_{1},\ldots,Z_{m}\}$ of $\operatorname{Lie}(\mathbb{G})$ s.t.
\begin{equation}
Z_{j}(x,\xi)=X_{j}(x)+R_{j}(x,\xi), \label{eq.liftinZjXj}%
\end{equation}
where the $R_i$'s are smooth vector fields operating only in the va\-ria\-bles
$\xi\in\mathbb{R}^{p}$, but with coefficient possibly depending on $(x,\xi)$. In
particular, $R_1,\ldots,R_m$ are $D_{\lambda}$-homogeneous of degree $1$.
\medskip
\end{enumerate}
\end{theorem}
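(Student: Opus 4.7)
The plan is to realize $\mathbb{G}$ as the abstract simply-connected nilpotent Lie group associated with the finite-dimensional stratified Lie algebra $\mathfrak{a}=\mathrm{Lie}(X)=\bigoplus_{k=1}^{\sigma_n}\mathfrak{a}_k$ that is already produced in the excerpt. Since $\mathfrak{a}$ is nilpotent of step at most $\sigma_n$, the Baker-Campbell-Hausdorff series truncates to a polynomial formula and defines a Lie-group law $\ast$ directly on the vector space underlying $\mathfrak{a}$, with exponential map equal to the identity and $\mathrm{Lie}(\mathbb{G})=\mathfrak{a}$. The grading endows $(\mathfrak{a},\ast)$ with canonical dilations multiplying the $k$-th layer by $\lambda^k$, making $\mathbb{G}$ a homogeneous Carnot group; because $\mathfrak{a}_1=\mathrm{span}\{X_1,\ldots,X_m\}$ Lie-generates $\mathfrak{a}$, the group has exactly $m$ generators, giving item~(1).

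For item~(2) I must realize $\mathbb{G}$ concretely as $\mathbb{R}^N=\mathbb{R}^n_x\times\mathbb{R}^p_\xi$ in coordinates compatible with the given $\delta_\lambda$ on $\mathbb{R}^n$. Using the fact that the evaluation map $\mathrm{ev}_0\colon\mathfrak{a}\to\mathbb{R}^n$, $Y\mapsto Y(0)$, is surjective (Remark \ref{rem.generalpropHt}) and respects the grading (each $V\in\mathfrak{a}_k$ being $\delta_\lambda$-homogeneous of degree $k$, its evaluation at $0$ lies in the span of those $\partial_{x_j}|_0$ with $\sigma_j=k$), I would pick, for each $j=1,\ldots,n$, an element $V_j\in\mathfrak{a}_{\sigma_j}$ with $V_j(0)=\partial_{x_j}|_0$, and then complete $\{V_j\}$ to a graded basis of $\mathfrak{a}$ by selecting $W_1,\ldots,W_p\in\ker(\mathrm{ev}_0)$, each homogeneous of some single degree $\tau_i\in\{1,\ldots,\sigma_n\}$, sorted so that $\tau_1\leq\cdots\leq\tau_p$. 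Writing a generic point as $u=\sum_j x_j V_j+\sum_i\xi_i W_i$ identifies $\mathbb{G}$ with $\mathbb{R}^n\times\mathbb{R}^p$, and in these coordinates the dilations take exactly the form \eqref{eq.expressionDlambda}.

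For item~(3) I set $Z_j$ to be the left-invariant vector field on $\mathbb{G}$ with $Z_j(0)=X_j(0)=\partial_{x_j}|_0$; then $\mathcal{Z}=\{Z_1,\ldots,Z_m\}$ Lie-generates $\mathrm{Lie}(\mathbb{G})=\mathfrak{a}$, and each $Z_j$ is $D_\lambda$-homogeneous of degree $1$. The decomposition \eqref{eq.liftinZjXj} amounts to the statement that the coordinate projection $\pi\colon(x,\xi)\mapsto x$ intertwines $Z_j$ with $X_j$, i.e.\ $\pi_\ast Z_j=X_j$. I would prove this by identifying $\pi$ with the flow-orbit map of the base point: for $u\in\mathfrak{a}\cong\mathbb{G}$, set $\pi(u):=\Phi_1^u(0)$, where $\Phi_t^u$ is the time-$t$ flow of $u$ seen as a vector field on $\mathbb{R}^n$. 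The identity $\Phi_1^u\circ\Phi_1^v=\Phi_1^{u\ast v}$, which is built into the BCH construction of $\ast$, makes $\pi$ intertwine left translation on $\mathbb{G}$ with the diffeomorphism action on $\mathbb{R}^n$; differentiating at the identity yields $\pi_\ast Z_j=X_j$ at every point. Once $\pi$ is shown to coincide with $(x,\xi)\mapsto x$ in the chosen coordinates, the splitting $Z_j(x,\xi)=X_j(x)+R_j(x,\xi)$ with $R_j$ tangent to the fibres $\{x\}\times\mathbb{R}^p$, i.e.\ acting only in $\xi$, is immediate.

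The main obstacle is precisely this coordinate identification: showing that the intrinsically defined orbit map $\pi$ coincides with the projection onto the first $n$ components. This requires an inductive argument on the layers of $\mathfrak{a}$, during which one may have to adjust the $W_i$ within their respective graded pieces so that the flow-orbit coordinates and the linear exponential coordinates agree on the $\mathbb{R}^n$-factor. This is the technical heart of the lifting constructions of Folland \cite{Folland2} and Biagi-Bonfiglioli \cite{BiagiBonfiglioliLast}; once it is in place, the $D_\lambda$-homogeneity of degree $1$ for each $R_j$ is forced by the corresponding homogeneity of $Z_j$ and $X_j$.
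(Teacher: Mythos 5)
First, a point of comparison: the paper does not prove Theorem \ref{ThmA} at all — it is quoted from \cite[Theorem 3.1]{BiagiBonfiglioliLast} — so what you are really offering is a reconstruction of the lifting of \cite{Folland2} as globalized by Biagi--Bonfiglioli. Your general plan (build the Carnot group on $\mathfrak{a}=\mathrm{Lie}(X)$ via the truncated BCH series, choose a graded basis adapted to the evaluation map $\mathrm{ev}_0$, and show that the projection onto the first $n$ coordinates pushes the left-invariant generators $Z_j$ onto $X_j$) is indeed the right strategy, and the preliminary observations (layerwise surjectivity of $\mathrm{ev}_0$, the form of $D_\lambda$ in the adapted exponential coordinates) are correct.

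There is, however, a genuine gap: the step you yourself label ``the technical heart'' — proving that the flow-orbit map coincides with the coordinate projection, i.e.\ $\pi_*Z_j=X_j$, which is exactly the content of \eqref{eq.liftinZjXj} — is not carried out, and the one identity you invoke to bridge it is neither free nor correct as stated. Composition of flows reverses the BCH order, $\Phi^u_1\circ\Phi^v_1=\Phi^{\mathrm{BCH}(v,u)}_1$, so with $\ast$ the usual BCH product your map $u\mapsto\Phi^u_1(0)$ is an orbit map for the \emph{opposite} group law; moreover left-invariant fields generate \emph{right} translations, while orbit maps of left actions intertwine \emph{right}-invariant fields with the infinitesimal generators, so ``differentiating at the identity'' does not by itself yield $\pi_*Z_j=X_j$ at every point — sorting out these conventions (or adjusting the basis layer by layer) is precisely where the work lies. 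In addition, the identity presupposes that every $u\in\mathfrak{a}$ is a complete vector field on $\mathbb{R}^n$ and that the time-one flows assemble into a global action of the simply connected group; this is a nontrivial theorem resting on the $\delta_\lambda$-homogeneity (it is the completeness result of \cite{BBConstruction} that \cite{BiagiBonfiglioliLast} relies on), not something ``built into'' the BCH construction. As written, the proposal is a sound plan whose decisive steps are deferred to the very references the theorem cites.
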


\begin{notation}
Throughout the paper, we will handle points in the \alt96
original' space $\mathbb{R}^{n}$, and points in the
\alt96 lifted' space $\mathbb{R}^{N}$, according
to Theorem \ref{ThmA}. 
To this end, we shall use the notation
\begin{itemize}
\item $x,y,z,\ldots$ for points in $\mathbb{R}^{n}$;
\item $u  = (x,\xi), v= (y,\eta),\ldots$ for points in 
$\mathbb{R}^{N}\equiv\mathbb{R}^{n}\times\mathbb{R}^{p}$,
\end{itemize}
denoting by Greek letters the added variables in the lifting procedure. The
sca\-lar time variables will be denoted by letters $t,s,\tau$.
Moreover, we shall indicate by $d_{X}$ and $d_{\mathcal{Z}}$ the
Carnot-Carath\'{e}odory distances associated with $X$ and $\mathcal{Z}$,
respectively, and with $B_{X}(x,\rho),$ $B_{\mathcal{Z}}(u,\rho)$ the $d_{X}%
$-ball, $d_{\mathcal{Z}}$-ball, respectively, with centre $x\in\mathbb{R}^{n}%
$, $u\in\mathbb{R}^{N}$, and radius $\rho>0$.
\end{notation}
Since the lifted vector fields $Z_{1},\ldots,Z_{m}$ in Theorem \ref{ThmA} are
left invariant on $\mathbb{G}$, many properties of $\mathcal{H}_{\mathbb{G}}$
and its associated heat kernel are well-known. In fact, the following theorem holds.

\begin{theorem} [{{\cite[Theorems 2.1, 2.5]{BLUpaper}}}]\label{exTheoremB}
There exists a
function
$$
\gamma_{\mathbb{G}}:\mathbb{R}^{1+N}\rightarrow\mathbb{R},
$$
smooth away from the origin, such that
\begin{equation}
\Gamma_{\mathbb{G}}(t,u;s,v):=\gamma_{\mathbb{G}}\left(  t-s,v^{-1}\ast
u\right)  \label{eq.defiGammaGHtG}%
\end{equation}
is the global heat kernel of
$\mathcal{H}_{\mathbb{G}}=\mathcal{L}_{\mathbb{G}}-\partial_{t}$; 
this means, precisely, that
\begin{itemize}
\item for every fixed $(t,z)\in\mathbb{R}^{1+N}$, one has 
$\Gamma_{\mathbb{G}}(t,z;\cdot)\in L_{\operatorname{loc}}^{1}(\mathbb{R}^{1+N})$;

\item for every $\varphi\in C_{0}^{\infty}(\mathbb{R}^{1+N})$
and every $(t,u)\in\mathbb{R}^{1+N}$, one has
\begin{align*}
&  \mathcal{H}_{\mathbb{G}}\bigg(
 \int_{\mathbb{R}^{1+N}}\Gamma_{\mathbb{G}}(t,u;s,v)\varphi(s,v)\,\d s\,\d v\bigg) 
 \\[0.1cm]
 & \qquad =\int_{\mathbb{R}^{1+N}}\Gamma_{\mathbb{G}}(t,u;s,v)
 \mathcal{H}_{\mathbb{G}}\varphi(s,v)\,\d s\,\d v = -\varphi(t,u).
\end{align*}
\end{itemize}
Furthermore, $\gamma_{\mathbb{G}}$ satisfies the following properties:
\begin{itemize}
\item[\emph{(i)}] $\gamma_{\mathbb{G}}\geq0$ and $\gamma_{\mathbb{G}}(t,u)=0$
if and only if $t\leq0$;

\item[\emph{(ii)}] $\gamma_{\mathbb{G}}(t,u)=\gamma_{\mathbb{G}}(t,u^{{-1}})$
for every $(t,u)\in\mathbb{R}^{1+N}$;

\item[\emph{(iii)}] for every $\lambda>0$ and every $(t,u)$, we have
$$
\gamma_{\mathbb{G}}(\lambda^{2}t,D_{\lambda}(u)) =
 \lambda^{-Q}\,\gamma_{\mathbb{G}}(t,u),
$$
where $Q$ is the homogeneous dimension of the group $\mathbb{G}$, that is,
\begin{equation} \label{eq.dimQGlifting}
Q:=q+q^{\ast},\qquad\text{with $q$ as in \eqref{eq.defiqH1} and 
$q^{\ast}:=\textstyle\sum_{k=1}^{p}\tau_{k}$}; 
\end{equation}

\item[\emph{(iv)}] $\gamma_{\mathbb{G}}$ vanishes at infinity, that is,
$\gamma_{\mathbb{G}}(t,u)\rightarrow0$ as $|(t,u)|\rightarrow+\infty$;

\item[\emph{(v)}] for every $t>0$, we have
$$
\int_{\mathbb{R}^{N}}\gamma_{\mathbb{G}}(t,u)\,\d u=1.
$$
\end{itemize}
Finally, the following Gaussian estimates for $\gamma_{\mathbb{G}}$ hold:
\begin{itemize}
\item[\emph{(a)}] there exists a constant $\mathbf{c}\geq1$, only depending on
$\mathbb{G}$ and $\mathcal{Z}$, s.t.
\begin{equation} \label{eq.Gaussest}%
\mathbf{c}^{-1}\,t^{-Q/2}\,\exp\bigg(-\frac{\mathbf{c}\,\|u\|^{2})}{t}\bigg)
 \leq\gamma_{\mathbb{G}}(t,u)\leq\mathbf{c}\,t^{-Q/2}\,
 \exp\bigg(-\frac{\|u\|^{2}}{\mathbf{c}\,t}\bigg), 
\end{equation}
for every $u\in\mathbb{R}^{N}$ and every $t>0$.

\item[\emph{(b)}] for every nonnegative integers $h,k$ there exists a constant
$\widehat{\mathbf{c}}>0$ s.t.
\begin{equation}  \label{eq.Gaussestder}%
\bigg\vert Z_{i_{1}}...Z_{i_{h}}
\bigg(\frac{\partial}{\partial {t}}\bigg)^{k}\gamma_{\mathbb{G}}(t,u)\bigg\vert 
\leq\widehat{\mathbf{c}}\,t^{-(Q+h+2k)/2}\,\exp
\bigg(-\frac{\|u\|^{2}}{\widehat{\mathbf{c}}\,t}\bigg)
\end{equation}
for any $u\in\mathbb{R}^{N}$, any $t>0$ and every choice of 
$i_1,\ldots, i_{h}\in \{  1,\ldots,m\}  .$
\end{itemize}
\end{theorem}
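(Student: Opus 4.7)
The plan is to descend from the lifted Gaussian bounds \eqref{eq.Gaussest}--\eqref{eq.Gaussestder} for $\gamma_{\mathbb{G}}$ on the Carnot group $\mathbb{G}$ to the stated bounds for $\gamma$ on $\mathbb{R}^n$ via the saturation representation \eqref{defiGammaesplicitinaPRE1}, schematically
\[
 \gamma(t,x,y)=\int_{\mathbb{R}^p}\gamma_{\mathbb{G}}\bigl(t,(y,0)^{-1}\ast(x,\eta)\bigr)\,\d\eta.
\]
Two structural ingredients will be used throughout. First, since $Z_j=X_j+R_j$ with $R_j$ acting only in the fiber variables $\xi$, the projection $\pi:\mathbb{R}^N\to\mathbb{R}^n$ onto the first $n$ coordinates is $1$-Lipschitz from $(\mathbb{R}^N,d_{\mathcal{Z}})$ onto $(\mathbb{R}^n,d_X)$; this yields $d_X(x,y)\leq d_{\mathcal{Z}}(u,v)$ for any lifts $u,v$ of $x,y$, as well as a converse lifting inequality $d_{\mathcal{Z}}((x,0),(y,\eta))\leq C\,d_X(x,y)$ for $\eta$ in a suitable neighborhood of the corresponding fiber (obtained by lifting horizontal curves). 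Second, a volume--fiber relation comparing $|B_{\mathcal{Z}}((x,0),r)|$ with $|B_X(x,r)|$ times the measure of the $\eta$-fiber of $B_{\mathcal{Z}}((x,0),r)$, which follows from the $D_\lambda$-homogeneity of Theorem \ref{ThmA}(2) and the global doubling of the $d_X$-balls proved in \cite{BiBoBra1}.

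For the upper bound in (i), I would insert the right inequality of \eqref{eq.Gaussest} into the saturation integral and split the $\eta$-domain into a ``near'' region, where $\|(y,0)^{-1}\ast(x,\eta)\|$ is comparable to $d_X(x,y)+\sqrt{t}$, and a ``far'' region. On the near region, the projection comparison $d_X(x,y)\leq d_{\mathcal{Z}}$ extracts the desired exponential factor $\exp\bigl(-d_X^2(x,y)/(\varrho t)\bigr)$, while the $\eta$-volume is controlled through the volume--fiber relation to produce the prefactor $1/|B_X(x,\sqrt{t})|$. On the far region the additional Gaussian decay of $\gamma_{\mathbb{G}}$ absorbs the growing $\eta$-volume at the price of enlarging $\varrho$. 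For the lower bound, I would instead \emph{restrict} the integration to a well-chosen sub-domain of $\eta$ on which $\|(y,0)^{-1}\ast(x,\eta)\|\leq C\bigl(d_X(x,y)+\sqrt{t}\bigr)$ (non-empty by the converse lifting inequality), apply the lower Gaussian bound in \eqref{eq.Gaussest}, and use the volume--fiber relation in the opposite direction to produce the matching prefactor.

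For part (ii), the strategy is first to commute all derivatives with the saturation integral. Since $R_j$ in \eqref{eq.liftinZjXj} involves only $\partial_{\eta_i}$-derivatives, each $X_j^x$ applied to $\gamma_{\mathbb{G}}(t,(y,0)^{-1}\ast(x,\eta))$ differs from the corresponding $Z_j^u$-derivative of $\gamma_{\mathbb{G}}$ only by pure $\partial_{\eta}$-terms, which can be discarded by integration by parts in $\eta$ against the constant weight $1$ (boundary contributions vanishing by the rapid decay in property (iv)). Derivatives in $y$ are handled analogously, exploiting the symmetry property (ii) of $\gamma_{\mathbb{G}}$ to transfer them to the $v$-variable. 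Iterating this reduction, an arbitrary word $(\partial_t)^kY_1\cdots Y_r$ with $Y_i\in\{X_j^x,X_j^y\}$ acting on $\gamma(t,x,y)$ becomes an integral of $(\partial_t)^k$ and a $\mathcal{Z}$-word of total length $r$ applied to $\gamma_{\mathbb{G}}$; inserting \eqref{eq.Gaussestder} and repeating the near/far splitting of part (i) then yields \eqref{Gassian_derivatives}, the extra factor $t^{-(k+r/2)}$ being inherited directly from \eqref{eq.Gaussestder}.

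The main obstacle I expect is obtaining the \emph{sharp} form of the near/far splitting of the lifted norm $\|(y,0)^{-1}\ast(x,\eta)\|$: unlike the standard Carnot-group situation on $\mathbb{R}^n$ itself, here the lifted $\ast$-product mixes the $x$- and $\eta$-coordinates, so the $d_X(x,y)$-contribution to the exponent and the $\eta$-integration do not decouple algebraically. Achieving the precise exponent $d_X^2(x,y)/(\varrho t)$ and, especially, the precise prefactor $1/|B_X(x,\sqrt{t})|$ requires uniform-in-$x$ geometric control of the $\eta$-fibers of the $d_{\mathcal{Z}}$-balls; the $D_\lambda$-homogeneity of Theorem \ref{ThmA}, together with the global doubling and volume estimates for $d_X$-balls of \cite{BiBoBra1}, are the key quantitative inputs that make this splitting effective simultaneously in $x$, $y$ and $t$.
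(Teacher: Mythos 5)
Your proposal does not prove the statement in question. The statement is Theorem \ref{exTheoremB}: the existence, basic properties (i)--(v), and the two-sided Gaussian bound \eqref{eq.Gaussest} together with the derivative bounds \eqref{eq.Gaussestder} for the heat kernel $\gamma_{\mathbb{G}}$ of the \emph{lifted} operator $\mathcal{H}_{\mathbb{G}}=\sum_j Z_j^2-\partial_t$ on the Carnot group $\mathbb{G}=(\mathbb{R}^N,\ast,D_\lambda)$. Your argument takes precisely \eqref{eq.Gaussest} and \eqref{eq.Gaussestder} as given inputs and then descends, via the saturation formula \eqref{defiGammaesplicitinaPRE1}, to Gaussian bounds for $\gamma$ on $\mathbb{R}^n$; as a proof of Theorem \ref{exTheoremB} this is circular, and what it actually sketches is (an outline of) a different result, namely Theorem \ref{thm.mainGaussianGamma}. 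In the paper Theorem \ref{exTheoremB} is not proved at all: it is quoted verbatim from Bonfiglioli--Lanconelli--Uguzzoni \cite{BLUpaper}, and a genuine proof would have to work entirely on $\mathbb{G}$ with the left-invariant fields $Z_1,\ldots,Z_m$ --- existence and smoothness of the kernel from hypoellipticity and the heat semigroup, the convolution form $\Gamma_{\mathbb{G}}(t,u;s,v)=\gamma_{\mathbb{G}}(t-s,v^{-1}\ast u)$ from left invariance, the scaling law (iii) from the $D_\lambda$-homogeneity of degree $2$ of $\mathcal{H}_{\mathbb{G}}$ plus uniqueness, symmetry (ii) from self-adjointness of $\mathcal{L}_{\mathbb{G}}$, the normalization (v) from stochastic completeness, and, as the deep analytic core, the uniform two-sided Gaussian estimates and derivative estimates on a homogeneous nilpotent group. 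None of these ingredients appears in your proposal.

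For what it is worth, the descent you describe is essentially the strategy the paper uses later for its main theorem: the fiber-volume comparison you invoke is Theorem \ref{thm.globalSanchez}, the near/far splitting and the converse lifting step for the lower bound are carried out in Propositions \ref{prop.upperestimateGamma}, \ref{prop.lowerestimateGamma} and Lemma \ref{lem.metricoReverseSanchez}, and the derivative bounds \eqref{Gassian_derivatives} are obtained not by integration by parts in $\eta$ but through the exact representation formulas of Theorem \ref{Thm bbHeat} combined with Proposition \ref{Prop homogeneous BLU} (which handles the mixed $x$/$y$ derivatives via the inversion map and homogeneous, non-left-invariant fields). But to answer the actual question you were asked, you would need to supply (or at least correctly attribute and outline) the Carnot-group construction and estimates of \cite{BLUpaper}, not consume them.
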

Now, the `lifting property' \eqref{eq.liftinZjXj} contained in Theorem
\ref{ThmA} easily implies that
\begin{equation} \label{eq.liftinHHG}%
\mathcal{H}_{\mathbb{G}}
\big((t,x)\mapsto u(t,\pi(x))\big)=
(\mathcal{H}u)(t,\pi(x)), \qquad\text{for all $u\in C^{2}(\mathbb{R}^{n})$}
\end{equation}
where $\pi$ is the projection of $\mathbb{R}^{N}=\mathbb{R}^{n}\times
\mathbb{R}^{p}$ on $\mathbb{R}^{n}$. By combining \eqref{eq.liftinHHG} with
Theorem \ref{exTheoremB}, it is proved in \cite{BBHeat} the following result.

\begin{theorem} [{{\cite[Theorem 1.4]{BBHeat}}}]\label{ThmC}
Let $X=\{X_{1},\ldots,X_{m}\}$ be
a set of smooth vector fields on $\mathbb{R}^{n}$ satisfying
axioms \emph{(H1)}-to-\emph{(H3)}, and let $\mathcal{H}$ be
the heat-type operator defined in \eqref{eq.defiHt}.
Moreover, let $\mathbb{G}=(\mathbb{R}^{N},\ast,D_{\lambda})$ and
$\mathcal{Z}=\{Z_{1},\ldots,Z_{m}\}$ be as in Theorem \ref{ThmA}. \medskip

Then, if
$\gamma_{\mathbb{G}}$ is as in Theorem \ref{exTheoremB}, the following facts hold.

\begin{itemize}
\item[\emph{(i)}] The function $\Gamma$ defined by
\begin{equation}\label{defiGammaesplicitinaPRE1}%
\Gamma(t,x;s,y):=\gamma(t-s,x,y):=\int_{\mathbb{R}^{p}}\gamma_{\mathbb{G}}
\big(t-s,(y,0)^{-1}\ast(x,\eta)\big)\,\d\eta, 
\end{equation}
is the global heat kernel of $\mathcal{H}$. This means, precisely, that
\begin{itemize}
\item[$\mathrm{(i)_{1}}$] for any fixed $(t,x)\in\mathbb{R}^{1+n}$, we have
$\Gamma(t,x;\cdot)\in L_{\operatorname{loc}}^{1}(\mathbb{R}^{1+n})$;

\item[$\mathrm{(i)_{2}}$] for every $\varphi\in C_{0}^{\infty}(\mathbb{R}^{1+n})$
and every $(t,x)\in\mathbb{R}^{1+n}$, we have
\begin{equation*}
\begin{split}
& \mathcal{H}\bigg(\int_{\mathbb{R}^{1+n}}\gamma(t-s,x,y)\varphi(s,y)\,\d s\,\d y\bigg) \\[0.1cm]
& \qquad
= \int_{\mathbb{R}^{1+n}}\gamma(t-s,x,y)\,\mathcal{H}\varphi(s,y)\,\d s\,\d y = 
-\varphi(t,x).
\end{split}
\end{equation*}
\end{itemize}

\item[\emph{(ii)}] There exists a constant $\mathbf{c}\geq1$ such that
\begin{equation}\label{eq.estimGammaquasiGauss}%
\begin{split}
& \mathbf{c}^{-1}\,t^{-Q/2}\,\int_{\mathbb{R}^{p}}\exp\bigg(
-\frac{\mathbf{c}\,\|(y,0)^{-1}\ast(x,\eta)\|^{2}}{t}\bigg)\,\d\eta
\leq\gamma(t,x,y)\\[0.2cm]
&  \qquad\qquad\leq\mathbf{c}\,t^{-Q/2}\,\int_{\mathbb{R}^{p}}\exp\bigg(
-\frac{\|(y,0)^{-1}\ast(x,\eta)\|^{2}}{\mathbf{c}\,t}\bigg)\,\d\eta,
\end{split}
\end{equation}
for
every $x,y\in\mathbb{R}^{n}$ and every $t>0$.

\item[\emph{(iii)}] $\gamma\geq0$ and
$$
\text{$\gamma(t,x,y)=0$ if and only if $t\leq 0$}.
$$

\item[\emph{(iv)}] $\gamma$ is symmetric in the space variables, i.e.%
$$
\gamma(t,x,y)=\gamma(t,y,x)\qquad \text{for every $x,y\in\mathbb{R}^{n}$ and every $t>0$}.
$$

\item[\emph{(v)}] $\Gamma$ is smooth out of the diagonal of $\mathbb{R}^{1+n}\times\mathbb{R}^{1+n}$.

\item[\emph{(vi)}] For every fixed $(t,x)\in\mathbb{R}^{1+n},$ with $t>0$, we
have
$$
\int_{\mathbb{R}^{n}}\gamma(t,x,y)\,\d y=1.
$$

\item[\emph{(vii)}] If $\varphi\in C_{b}^{0}(\mathbb{R}^{n})$, then the
function
$$
u(t,x):=\int_{\mathbb{R}^{n}}\gamma(t,x,y)\,\varphi(y)\,\d y
$$
defined for $(t,x)\in\Omega=(0,+\infty)\times\mathbb{R}^{n}$ is the unique
bounded classical solution of the ho\-mo\-ge\-neo\-us Cauchy problem
for $\mathcal{H}$, that is,
$$
\begin{cases}
\mathcal{H}u=0 & \text{in $\Omega$}\\
u(0,x)=\varphi(x) & \text{for $x\in\mathbb{R}^{n}$.}%
\end{cases}
$$

\item[\emph{(viii)}] The function $\Gamma^{\ast}(t,u;s,v)=\Gamma
(s,v;t,u)$ is the global heat kernel of the \emph{(}formal\emph{)} adjoint operator
$\mathcal{H}^{\ast}:=\mathcal{L}+\partial_{t}$, and satisfies dual statements
with respect to \emph{(i)}.
\end{itemize}
\end{theorem}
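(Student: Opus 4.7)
The plan is to take the formula \eqref{defiGammaesplicitinaPRE1} as the \emph{definition} of $\gamma$ and systematically transfer the listed properties from the lifted heat kernel $\gamma_{\mathbb{G}}$ (Theorem \ref{exTheoremB}) down to $\gamma$, using the key observation that each vector field $R_j$ in the lifting \eqref{eq.liftinZjXj} acts only on the added variables $\xi$, so that \eqref{eq.liftinHHG} holds and, more generally, a function of $x$ alone is \emph{saturated} with respect to the $\eta$-direction for the operator $\mathcal{H}_{\mathbb{G}}$.

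First I would use the upper bound in \eqref{eq.Gaussest} to check that the integral defining $\gamma$ converges, with a majorant uniform on compact subsets of $\{t>0\}\times\mathbb{R}^n\times\mathbb{R}^n$: the crucial estimate is that $\|(y,0)^{-1}\ast(x,\eta)\|$ grows at least like a positive power of $|\eta|$ for large $|\eta|$, because the homogeneous norm is equivalent to a weighted sum-type seminorm and the group law $\ast$ is polynomial. This same domination justifies differentiating under the integral to any order away from the diagonal, which yields (v), and specialising to $t\le 0$ yields (iii) via property (i) of Theorem \ref{exTheoremB}. For (ii), substituting the two-sided estimate \eqref{eq.Gaussest} directly under the $\eta$-integral gives the stated form; for (vi), the change of variable $w=(y,0)^{-1}\ast(x,\eta)$ (a polynomial measure-preserving map of $\mathbb{R}^N$) combined with property (v) of Theorem \ref{exTheoremB} gives $\int_{\mathbb{R}^n}\gamma(t,x,y)\,\d y=\int_{\mathbb{R}^N}\gamma_{\mathbb{G}}(t,w)\,\d w=1$.

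The symmetry in (iv) is where one uses $\gamma_{\mathbb{G}}(t,u)=\gamma_{\mathbb{G}}(t,u^{-1})$ from Theorem \ref{exTheoremB}(ii): writing $\gamma(t,x,y)$ as $\int \gamma_{\mathbb{G}}(t,(x,\eta)^{-1}\ast(y,0))\,\d\eta$ and performing a translation in the $\eta$-fibre (again a measure-preserving polynomial map) one casts it into the canonical form defining $\gamma(t,y,x)$. The \emph{main technical obstacle} is the distributional identity in $(\mathrm{i})_2$. Since the constant function on $\mathbb{R}^p_\eta$ is not integrable, one cannot directly lift a test function $\varphi\in C_0^\infty(\mathbb{R}^{1+n})$ to a test function on $\mathbb{R}^{1+N}$; my plan is a truncation argument. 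Choose $\psi_R\in C_0^\infty(\mathbb{R}^p)$ with $\psi_R\equiv 1$ on $\{|\eta|\le R\}$, set $\Phi_R(s,y,\eta):=\varphi(s,y)\,\psi_R(\eta)$, and apply the defining identity of $\Gamma_{\mathbb{G}}$ from Theorem \ref{exTheoremB} to $\Phi_R$. Using \eqref{eq.liftinHHG} together with the fact that $R_j$ acts only on $\xi$, one finds $\mathcal{H}_{\mathbb{G}}\Phi_R=\psi_R\cdot(\mathcal{H}\varphi)+\mathcal{E}_R$, where $\mathcal{E}_R$ collects derivatives falling on $\psi_R$ and is supported in $\{|\eta|\ge R\}$; the Gaussian domination of \eqref{eq.Gaussest} forces $\mathcal{E}_R$ to contribute vanishingly as $R\to\infty$, and Fubini turns the lifted convolution identity into the required one for $\gamma$. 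The local integrability in $(\mathrm{i})_1$ then follows a posteriori from (ii).

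With (i) in hand, (vii) follows by a standard approximate-identity argument: existence and attainment of the initial datum come from (vi) together with the Gaussian upper bound (which forces $\gamma(t,x,\cdot)$ to concentrate at $x$ as $t\to 0^+$), while uniqueness among \emph{bounded} solutions is precisely the content of the uniqueness result of \cite[Thm.~4.1]{BBHeat} quoted in the introduction. Statement (viii) is then immediate from (i) and (iv) via the time-reversal $\Gamma^{\ast}(t,u;s,v)=\Gamma(s,v;t,u)$, since $\mathcal{H}^{\ast}=\mathcal{L}+\partial_t$ and the spatial operator $\mathcal{L}=\sum X_j^2$ is formally self-adjoint, so that testing the identity in (i) against a test function in the swapped variables yields the dual statement with no further work.
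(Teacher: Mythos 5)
First, a point of comparison: the paper does not prove this theorem at all --- it is imported verbatim from \cite[Theorem 1.4]{BBHeat}, as the attribution in the statement indicates --- so there is no internal proof to measure your argument against. What you have written is a reconstruction of the saturation strategy of \cite{BBHeat} (and of \cite{BiagiBonfiglioliLast} in the elliptic case): take \eqref{defiGammaesplicitinaPRE1} as the definition and push the properties of $\gamma_{\mathbb{G}}$ from Theorem \ref{exTheoremB} down through the lifting of Theorem \ref{ThmA}. As an architecture this is the right (indeed the known) route, and items (ii), (iii), (vi), (viii) are handled essentially correctly, modulo the caveat below.

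The genuine gap is that the fibre manipulations you treat as routine are precisely the technical heart of the quoted result, and under (H3) they are not routine. Since no group structure on $\mathbb{R}^n$ makes the $X_j$ left invariant, $\pi$ is not a homomorphism and the fibres $\{x\}\times\mathbb{R}^p$ are not cosets of a normal subgroup of $\mathbb{G}$; hence the map $\eta\mapsto(x,0)\ast(x,\eta)^{-1}\ast(y,0)$ underlying your proof of (iv) is \emph{not} a ``translation in the $\eta$-fibre''. One must first show that its first $n$ components are identically $y$ (this can be deduced from the fact that, by \eqref{eq.liftinZjXj}, every left-invariant vector field on $\mathbb{G}$ is $\pi$-related to an element of $\mathrm{Lie}(X)$), and then that its $\xi$-components give a polynomial bijection of $\mathbb{R}^p$ with Jacobian of modulus one, which requires the graded triangular structure of the group law, not mere polynomiality; the same caveat applies to the full-space change of variables you invoke for (vi). The issue reappears, hidden, in your truncation proof of $(\mathrm{i})_2$: letting $R\to\infty$ produces the saturation of $\Gamma_{\mathbb{G}}\big(t,(x,\xi);s,(y,\eta)\big)$ over the \emph{source} fibre in $\eta$, and identifying this with the defining field-fibre saturation \eqref{defiGammaesplicitinaPRE1} requires exactly the independence of $\xi$ of $\int_{\mathbb{R}^p}\gamma_{\mathbb{G}}\big(t-s,(x,\xi)^{-1}\ast(y,\eta)\big)\,\d\eta$, i.e.\ (iv) again or a separate fibre-constancy lemma; without it your argument proves the reproduction identity for a kernel that is only afterwards identified with $\gamma$. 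These facts are true but need proofs; asserting them as obvious measure-preserving changes of variable is where the proposal is incomplete. Two smaller points: the differentiations under the integral sign needed for (v), for $\mathcal{H}u_2=0$-type computations and for (vii) require bounds on derivatives of $\gamma_{\mathbb{G}}$ in arbitrary directions, whereas \eqref{eq.Gaussestder} controls only $Z$-derivatives, so one must first rewrite Euclidean derivatives as combinations of left-invariant ones with polynomial coefficients (the device used in Section \ref{sec.GaussianDer}); and for the uniqueness half of (vii) you simply cite \cite[Theorem 4.1]{BBHeat}, so that part of the statement is quoted rather than proved.
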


In the above theorem, $\|\cdot\| $ is \emph{any} homogeneous norm
on $\mathbb{G}$. 
\begin{remark} \label{Remark switched Gaussian}
Points {(ii)} and 
{(iv)} in the above theorem also
imply that, with the same constant $\mathbf{c}\geq1$ as in {(ii)}, for all
$x,y\in\mathbb{R}^{n}$ and any $t>0$ one has
\begin{equation}\label{eq.estimGammaquasiGauss2}%
\begin{split}
& \mathbf{c}^{-1}\,t^{-Q/2}\,\int_{\mathbb{R}^{p}}\exp\bigg(
-\frac{\mathbf{c}\,\|(x,0)^{-1}\ast(y,\eta)\|^{2}}{t}\bigg)\,\d\eta
\leq\gamma(t,x,y)\\[0.2cm]
&  \qquad\qquad\leq\mathbf{c}\,t^{-Q/2}\,\int_{\mathbb{R}^{p}}\exp\bigg(
-\frac{\|(x,0)^{-1}\ast(y,\eta)\|^{2}}{\mathbf{c}\,t}\bigg)\,\d\eta,
\end{split}
\end{equation}
{(}with the switched roles of $x,y$ in the Gaussians{)}. It will be sometimes
con\-ve\-nient to use \eqref{eq.estimGammaquasiGauss} in this alternative form.
\end{remark}

\subsection{Review of known results on the CC distance} \label{app.CCdistanceremind}

Throughout the sequel, we will handle two distinct families of
H\"{o}rmander's vector fields, each one inducing a Carnot-Carath\'{e}odory
 distance:
 \begin{itemize}
 \item the original
family of vector fields $X_{1},...,X_{m}$, defined in $\mathbb{R}^{n}$, and
satisfying (H1)-to-(H3);
\item the lifted vector fields
$Z_{1},...,Z_{m}$, defined on the higher dimensional Carnot group $\mathbb{G}$
in $\mathbb{R}^{N}$.
\end{itemize}
 Both the $X_{i}$'s and the $Z_{i}$'s are $1$-homogeneous
with respect to suitable dilations, which implies some properties of the
distances and the corresponding balls. The $Z_{i}$'s are also left invariant,
which implies more properties for the corresponding distance. Finally, the
$Z_{i}$'s are a lifting of the $X_{i}$'s. The next proposition collects the
basic properties which follow from these facts. 

\begin{proposition} \label{Prop properties distances}
With the previous notation and assumptions
about the sy\-stems of vector fields $X$ and $\mathcal{Z}$, the following properties
hold.
\medskip

\emph{(i)}\,\,Homogeneity:
$$
\begin{array}{ll}
d_{X}(\delta_{\lambda}(x),\delta_{\lambda}(y))=\lambda\,d_{X}(x,y) &
\text{for all $x,y\in \mathbb{R}^{n}$ and $\lambda>0$} \\[0.15cm]
d_{\mathcal{Z}}(D_{\lambda}(u),D_{\lambda}(v))=\lambda\,d_{\mathcal{Z}}(u,v)
& \text{for all $u,v\in \mathbb{R}^{N}$ and $\lambda>0$} \\[0.15cm]
\delta_{\lambda}\big(B_{X}(x,\rho)\big)
= B_{X}\big(\delta_{\lambda}(x),\lambda\rho\big) & 
\text{for all $x\in\mathbb{R}^{n}$ and $\lambda,\rho>0$} \\[0.15cm]
D_{\lambda}\big(B_{\mathcal{Z}}(u,\rho)\big)  = 
B_{\mathcal{Z}}\big(D_{\lambda}(u),\lambda\rho\big) & 
\text{for all $u\in\mathbb{R}^{N}$ and $\lambda,\rho>0$}
\end{array}
$$

\emph{(ii)}\,\,Left invariance:
$$
\begin{array}{ll}
d_{\mathcal{Z}}(u,v)=d_{\mathcal{Z}}(u\ast w,v\ast w) & \text{for all
$u,v,w\in\mathbb{R}^{N}$} \\[0.15cm]
u\ast B_{\mathcal{Z}}(v,\rho)  =B_{\mathcal{Z}}(u\ast v,\rho)
& \text{for all $u,v\in\mathbb{R}^{N}$ and $\rho>0$}
\end{array}
$$

\emph{(iii)}\,\,Projection:
$$
\begin{array}{ll}
d_{X}(x,y)\leq d_{\mathcal{Z}}((x,\xi),(y,\eta)) & \text{for
all $(x,\xi),(y,\eta)\in\mathbb{R}^{N}=\mathbb{R}^{n}\times\mathbb{R}^{p}$} \\[0.15cm]
\pi\big(B_{\mathcal{Z}}((x,\xi),\rho)\big)  = 
B_{X}(x,\rho) & \text{for all $(x,\xi)\in\mathbb{R}^{N}$ and $\rho>0$}
\end{array}
$$
where $\pi$ is the projection from $\mathbb{R}^{N}=\mathbb{R}^{n}%
\times\mathbb{R}^{p}$ into $\mathbb{R}^{n}$. In particular, since $\pi$ is
surjective, the last equality in \emph{(iii)} means that
\begin{equation}\label{item.pjectionBY}%
\text{$\forall\,\,y\in B_{X}(x,\rho),\,\,\xi\in\mathbb{R}^{p}\,\,\,\exists\,\,\,
\eta
\in\mathbb{R}^{p}$ s.t.\,$(y,\eta)\in B_{\mathcal{Z}}(x,\xi) ,\rho)$}.
\end{equation}

\emph{(iv)} Volume of $\mathcal{Z}$-balls: setting $\omega_{Q}= |B_{\mathcal{Z}}(0,1)|$,
we have
\begin{equation} \label{eq.measureBYCarnot}
\begin{array}{c}
\text{$|B_{\mathcal{Z}}(u,\rho)| = |B_{\mathcal{Z}}(0,\rho)| =
\omega_{Q}\,\rho^{Q}$ for all $u\in\mathbb{R}^{N}$ and $\rho>0$}. 
\end{array}
\end{equation}

\emph{(v)} Homogeneous norm: if we let
$$
\| u\| =d_{\mathcal{Z}}(u,0) \qquad \text{ for every $u\in\mathbb{R}^{N}$},
$$
then $\| \cdot\| $ is a homogeneous norm, and we also have
$$
d_{\mathcal{Z}}(u,v)  =\| v^{-1}\ast u\|
=\| u^{-1}\ast v\|\qquad \text{ for every $u,v\in\mathbb{R}^{N}$}.
$$
Throughout the following, the symbol $\|\cdot\|$ in 
$\mathbb{R}^{N}$ will always denote this special norm.
\end{proposition}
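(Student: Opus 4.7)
The plan is to establish each of the five items separately, using in turn the dilation homogeneity of the vector fields, the left-invariance of the $Z_j$'s, and the lifting identity $Z_j=X_j+R_j$ from Theorem \ref{ThmA}. For item \emph{(i)}, I would start with a subunit curve $\gamma\in C(r)$ for $X$ connecting $x$ to $y$ and compute the velocity of $\tilde\gamma=\delta_{\lambda}\circ\gamma$: because the Jacobian of $\delta_{\lambda}$ is the diagonal matrix $\operatorname{diag}(\lambda^{\sigma_1},\ldots,\lambda^{\sigma_n})$ and because the $X_j$'s are $1$\--homogeneous, a direct calculation yields $\tilde\gamma'(t)=\sum_j(\lambda\,a_j(t))\,X_j(\tilde\gamma(t))$, so $\tilde\gamma\in C(\lambda r)$ connects $\delta_{\lambda}x$ to $\delta_{\lambda}y$. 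Taking infima and repeating with $\lambda^{-1}$ in place of $\lambda$ gives $d_X(\delta_\lambda x,\delta_\lambda y)=\lambda\,d_X(x,y)$; the statement for $d_{\mathcal Z}$ is proved identically using the $D_\lambda$\--homogeneity of the $Z_j$'s. The ball identities then follow by rewriting $y\in B_X(x,\rho)\iff \delta_\lambda y\in B_X(\delta_\lambda x,\lambda\rho)$, and similarly on $\mathbb{G}$. For item \emph{(ii)}, left-invariance of the $Z_j$'s gives $dL_w\cdot Z_j(u)=Z_j(w\ast u)$, so left-translating any $\mathcal Z$\--subunit curve by $w$ produces another $\mathcal Z$\--subunit curve with the \emph{same} coefficients, yielding the invariance of $d_{\mathcal Z}$ together with the ball identity.

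For item \emph{(iii)}, the projection inequality $d_X\le d_{\mathcal Z}$ is obtained by projecting a $\mathcal Z$\--subunit curve $\Gamma=(\gamma,\tilde\gamma)$ through $\pi$: by the decomposition \eqref{eq.liftinZjXj}, the first $n$ components of $Z_j$ coincide with $X_j$, so the projected curve $\gamma$ satisfies $\gamma'(t)=\sum_j a_j(t)\,X_j(\gamma(t))$ with the same coefficients, proving both the inequality and the inclusion $\pi(B_{\mathcal Z}((x,\xi),\rho))\subseteq B_X(x,\rho)$. For the converse, given an $X$\--subunit curve $\gamma$ from $x$ to $y$ with coefficients $a_j$, I would lift it by solving the Cauchy problem
\[
\Gamma'(t)=\sum_{j=1}^m a_j(t)\,Z_j(\Gamma(t)),\qquad \Gamma(0)=(x,\xi),
\]
on $[0,1]$. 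Uniqueness of the $\R^n$-component and the decomposition $Z_j=X_j+R_j$ force $\pi\circ\Gamma=\gamma$, so $\Gamma(1)=(y,\eta)$ for some $\eta$ and $\Gamma$ is $\mathcal Z$\--subunit with the same $r$, yielding \eqref{item.pjectionBY} and completing the ball equality.

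Items \emph{(iv)} and \emph{(v)} then follow quickly: from \emph{(i)} one has $B_{\mathcal Z}(0,\rho)=D_\rho(B_{\mathcal Z}(0,1))$, and since $D_\rho$ has Lebesgue Jacobian $\rho^Q$ (with $Q$ as in \eqref{eq.dimQGlifting}) this gives $|B_{\mathcal Z}(0,\rho)|=\omega_Q\,\rho^Q$; combined with the translation-invariance of Lebesgue measure on the Carnot group $\mathbb{G}$ (a standard consequence of the polynomial group law, which has Jacobian one), one obtains \eqref{eq.measureBYCarnot}. For \emph{(v)}, positive definiteness and $\|D_\lambda u\|=\lambda\|u\|$ are immediate from \emph{(i)} and the fact that $d_{\mathcal Z}$ is a metric; the subadditivity and symmetry properties, as well as $d_{\mathcal Z}(u,v)=\|v^{-1}\ast u\|=\|u^{-1}\ast v\|$, reduce via \emph{(ii)} to the triangle inequality for $d_{\mathcal Z}$. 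The main obstacle, in my view, is the global solvability of the lifting ODE in \emph{(iii)}: one must verify that $\Gamma$ exists on the full interval $[0,1]$, not merely locally. This relies on the completeness of left-invariant vector fields on the Lie group $\mathbb{G}$, which ensures that any time-dependent combination $\sum_j a_j(t)\,Z_j$ with bounded measurable $a_j$ generates a flow defined for all $t\in[0,1]$; once this is in place, the remaining verifications are routine.
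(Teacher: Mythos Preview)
Your proof is correct and follows the standard route; the paper itself does not give a proof of this proposition but simply refers the reader to \cite{BiagiBonfiglioliLast} with the remark that the statements are ``immediate''. The arguments you sketch---rescaling subunit curves for (i), left-translating them for (ii), projecting and lifting them for (iii), and then reading off (iv)--(v) from the Jacobian of $D_\rho$ and the translation invariance of Lebesgue measure on $\mathbb{G}$---are exactly the expected ones, and your identification of the one nontrivial point (global existence of the lifted curve in (iii), guaranteed by completeness of left-invariant vector fields on $\mathbb{G}$) is apt.
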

The proof of Proposition \ref{Prop properties distances} can
be found in \cite{BiagiBonfiglioliLast}, or is immediate. \medskip

A much deeper result describes the \emph{volume of }$X$\emph{-balls}. The
following theo\-rem specializes a celebrated result by Nagel, Stein and Wainger
\cite{NSW} to the case of our $1$-homogeneous vector fields $X$ (for a proof
see \cite[Theorem B]{BiBoBra1}):

\begin{theorem}
\label{thm.NSWglobal} Let $X=\{X_{1},\ldots,X_{m}\},\,n$ and $q$ be as before. Then,
there exist constants $\gamma_{1},\gamma_{2}>0$
such that, for every $x\in\mathbb{R}^{n}$ and every $\rho>0$, one has the estimates
\begin{equation} \label{eq.NSWmodificata}%
\gamma_{1}\,\sum_{j=n}^{q}f_{j}(x)\,\rho^{j}\leq\vert {B_{X}(x,\rho
)}\vert \leq\gamma_{2}\,\sum_{j=n}^{q}f_{j}(x)\,\rho^{j}.
\end{equation}
Here, 
the functions $f_{k},\ldots,f_{q}:\mathbb{R}^{n}\rightarrow\mathbb{R}$ satisfy the following properties:

\begin{enumerate}
\item $f_{k},\ldots,f_{q}$ are continuous and non-negative on $\mathbb{R}^{n}$;

\item for every $j\in\{n,\ldots,q\}$, the function $f_{j}$ is 
$\delta_{\lambda}$-homogeneous of degree $q-j$.
\end{enumerate}
In particular, $f_{q}(x)$ is constant in $x$ and strictly positive.
\end{theorem}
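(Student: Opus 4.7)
The plan is to specialize the classical local Nagel--Stein--Wainger ball-box theorem to this $\delta_\lambda$-homogeneous setting via a scaling argument. As a first step I would fix a compact set $K = \overline{B_X(0,R_0)} \subset \mathbb{R}^n$ for some $R_0 > 0$ and apply the local NSW ball-box theorem: since H\"ormander's condition holds at every point of $\mathbb{R}^n$ by Remark~\ref{rem.generalpropHt}, there exist constants $c_1, c_2, \rho_0 > 0$ and a finite family of $n$-tuples $I = (I_1, \ldots, I_n)$ of iterated-commutator multi-indices of $X_1, \ldots, X_m$ such that
\begin{equation*}
c_1 \sum_I |\lambda_I(x)|\,\rho^{d(I)}
\le |B_X(x,\rho)|
\le c_2 \sum_I |\lambda_I(x)|\,\rho^{d(I)}
\qquad \text{for all $x \in K$ and $0 < \rho \le \rho_0$,}
\end{equation*}
where $Y_{I_k}$ is the iterated commutator indexed by $I_k$ of length $\ell(I_k)$, $d(I) := \sum_k \ell(I_k)$, and $\lambda_I(x) := \det[Y_{I_1}(x), \ldots, Y_{I_n}(x)]$.

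Next I would globalize via the dilations. A direct induction using \eqref{1-homoge} shows that each commutator $Y_I$ is $\delta_\lambda$-homogeneous of degree $\ell(I)$, so its coefficient in the $\partial_{x_i}$-direction scales by $\lambda^{\sigma_i - \ell(I)}$; expanding the determinant then gives
\begin{equation*}
\lambda_I(\delta_\lambda(x)) = \lambda^{q - d(I)}\,\lambda_I(x),
\end{equation*}
the factor $q = \sum_i \sigma_i$ arising from the total scaling weight of the coordinate basis. Together with the ball-scaling $\delta_\lambda(B_X(x,\rho)) = B_X(\delta_\lambda(x), \lambda\rho)$ from Proposition~\ref{Prop properties distances} and $|\det(\delta_\lambda)| = \lambda^q$, one checks that both sides of the local estimate transform with the same weight $\lambda^q$ under $(x,\rho)\mapsto (\delta_\lambda(x),\lambda\rho)$. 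Setting $\mu := \max\{1,\,d_X(x,0)/R_0,\,\rho/\rho_0\}$, the rescaled pair $(\delta_{1/\mu}(x), \rho/\mu)$ lies in $K \times (0, \rho_0]$ and so falls within the hypotheses of the local NSW estimate; applying the estimate there and dilating back by $\delta_\mu$ yields the same two-sided inequality, with the same constants $c_1,c_2$, valid for every $x \in \mathbb{R}^n$ and every $\rho > 0$.

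To match the stated form I would regroup the sum by the value of $d(I)$ and set
\begin{equation*}
f_j(x) := \sum_{I\,:\,d(I)=j} |\lambda_I(x)|.
\end{equation*}
Continuity and non-negativity are immediate, and the scaling $\lambda_I(\delta_\lambda x) = \lambda^{q-d(I)}\lambda_I(x)$ gives $f_j(\delta_\lambda x) = \lambda^{q-j} f_j(x)$. The index range $n \le j \le q$ is forced: each commutator has length $\ge 1$, whence $d(I) \ge n$; for $d(I) > q$ the smooth function $\lambda_I$ would be $\delta_\lambda$-homogeneous of strictly negative degree, and sending $\lambda \to 0^+$ in the homogeneity identity while invoking continuity of $\lambda_I$ at the origin forces $\lambda_I \equiv 0$. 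For $j = q$, the function $f_q$ is continuous and $\delta_\lambda$-homogeneous of degree $0$, so letting $\lambda \to 0^+$ gives $f_q(x) = f_q(0)$ for every $x$; its strict positivity follows from the two-sided estimate at $x=0$ combined with $|B_X(0,\rho)| = \rho^q\,|B_X(0,1)| > 0$ (itself a consequence of the ball-scaling). The main technical point of the whole argument is the rescaling bookkeeping: the Jacobian $\mu^q$, the factor $\rho^{d(I)}$, and the homogeneity weight $\mu^{q-d(I)}$ must conspire to cancel exactly under $(x,\rho)\mapsto(\delta_{1/\mu}(x),\rho/\mu)$, which they do precisely because of the weight structure of $\delta_\lambda$ -- and this same algebra is what forces the summation index to stop at $j = q$.
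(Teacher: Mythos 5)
Your proposal is correct and follows essentially the same route as the proof the paper relies on: Theorem \ref{thm.NSWglobal} is deferred to \cite[Theorem B]{BiBoBra1}, whose argument is precisely this one --- specialize the local Nagel--Stein--Wainger ball-box estimate on a compact set, observe that $|\lambda_I(\delta_\lambda(x))|=\lambda^{q-d(I)}|\lambda_I(x)|$, $\delta_\lambda(B_X(x,\rho))=B_X(\delta_\lambda(x),\lambda\rho)$ and $|\det \delta_\lambda|=\lambda^q$ make both sides scale identically, and then rescale an arbitrary $(x,\rho)$ back into the compact region. Your bookkeeping of the index range (degrees $d(I)>q$ vanish identically by homogeneity plus continuity at the origin, $d(I)\ge n$ trivially), the definition $f_j=\sum_{d(I)=j}|\lambda_I|$, and the constancy and strict positivity of $f_q$ all match the cited proof.
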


\begin{remark} \label{rem.doublingglobale}
 From estimate \eqref{eq.NSWmodificata} it can be
 easily derived the following notable fact: \emph{for any $x\in\mathbb{R}^{n}$
 and any $0<r<\rho$, one has}
 \begin{equation} \label{eq.doublingglobalegeneral}%
 \gamma_{1}\,\bigg(\frac{\rho}{r}\bigg)^{n}\leq\frac{|{B_{X}(x,\rho)}|}
 {|{B_{X}(x,r)}|}\leq\gamma_{2}\,\bigg(\frac{\rho}{r}\bigg)^{q},
 \end{equation}
 In particular, the following \emph{global} doubling property holds: 
 \begin{equation} \label{eq.doublingesplicita}%
 | {B_{X}(x,2\rho)}| \leq 2^{q}\gamma_2\,|
 {B_{X}(x,\rho)}| \qquad \text{for all $x\in\mathbb{R}^{n}$ and $\rho>0$}.
 \end{equation}
 \end{remark}
The above facts easily imply that the
function
$$
\frac{1}{| B_{X}(x,\sqrt{t})| }\,\exp\bigg(-\frac{d_{X}^{2}(x,y)}{t}\bigg)
$$
(which plays a key role in our estimates) is not so asymmetric in $x,y$ 
as could seem. More precisely, we have the following proposition.

\begin{proposition} \label{Remark equivalent gaussian}
For every $\theta > 0$ there exists a constant $C_1 > 0$ such that
\begin{equation} \label{equivalent gaussian}%
\frac{1}{| B_{X}(y,\sqrt{t})| }\,
\exp\bigg(-\frac{d_{X}^{2}(x,y)}{\theta t}\bigg) \leq \frac{C_{1}}
{|B_{X}(x,\sqrt{t})|}\,\exp\bigg(-\frac{d_{X}^{2}(x,y)}{C_{1}\theta t}\bigg),  
\end{equation}
for every
$x,y\in\mathbb{R}^{n}$ and every $t>0$.
\end{proposition}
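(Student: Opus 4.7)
The plan is to isolate the asymmetry of \eqref{equivalent gaussian} into the ratio of ball volumes $|B_X(x,\sqrt t)|/|B_X(y,\sqrt t)|$ and to control this ratio via the global doubling property \eqref{eq.doublingglobalegeneral}, at the cost of a slight weakening of the Gaussian exponent. First, by the triangle inequality $B_X(x,\sqrt t)\subseteq B_X(y,\sqrt t + d_X(x,y))$; hence, applying the right-hand inequality in \eqref{eq.doublingglobalegeneral} centered at $y$ with $r=\sqrt t$ and $\rho=\sqrt t + d_X(x,y)$ yields
$$\frac{|B_X(x,\sqrt t)|}{|B_X(y,\sqrt t)|}\leq \gamma_2\bigg(1+\frac{d_X(x,y)}{\sqrt t}\bigg)^{q}.$$

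Next I would invoke the elementary fact that polynomials are dominated by Gaussians: for every $\alpha>0$ there is a constant $K=K(\alpha,q,\gamma_2)>0$ with $\gamma_2(1+s)^{q}\leq K\,e^{\alpha s^{2}}$ for all $s\geq 0$. Choosing $\alpha:=1/(2\theta)$ and applying this with $s:=d_X(x,y)/\sqrt t$, the previous bound becomes
$$\frac{|B_X(x,\sqrt t)|}{|B_X(y,\sqrt t)|}\leq K\,\exp\bigg(\frac{d_X^{2}(x,y)}{2\theta\,t}\bigg).$$

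To finish, I would multiply both sides by $|B_X(x,\sqrt t)|^{-1}\exp(-d_X^{2}(x,y)/(\theta t))$: the left-hand side is exactly the left-hand side of \eqref{equivalent gaussian}, while on the right the two Gaussian exponents combine into $\exp(-d_X^{2}(x,y)/(2\theta t))$. Setting $C_1:=\max(K,2)$ so that simultaneously $K\leq C_1$ and $1/(C_1\theta)\leq 1/(2\theta)$ then delivers the claimed inequality.

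I do not foresee any real obstacle; the argument is essentially a three-line exploitation of the triangle inequality and the global doubling property established in Theorem \ref{thm.NSWglobal}. The only tuning step is choosing the weaker exponent $1/(C_1\theta)$ inside the Gaussian on the right of \eqref{equivalent gaussian}, which is precisely what creates the room needed to absorb the polynomial prefactor $(1+d_X(x,y)/\sqrt t)^q$ produced by doubling.
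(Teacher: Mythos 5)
Your proof is correct and uses essentially the same ingredients as the paper's: the global doubling bound \eqref{eq.doublingglobalegeneral} to control the ratio $|B_X(x,\sqrt t)|/|B_X(y,\sqrt t)|$ by a power of $1+d_X(x,y)/\sqrt t$, followed by absorbing that polynomial prefactor into the Gaussian at the price of a larger constant in the exponent. The paper runs the same argument in two cases ($d_X(x,y)\leq\sqrt t$ and $d_X(x,y)>\sqrt t$, the latter chaining through the balls of radius $d_X(x,y)$ and landing on the slightly sharper power $q-n$), whereas your inclusion $B_X(x,\sqrt t)\subseteq B_X(y,\sqrt t+d_X(x,y))$ handles both at once — a modest streamlining, not a different method.
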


\begin{proof}
To prove this, let us distinguish two cases.
\begin{itemize}
\item If $d_X(x,y) \leq\sqrt{t}$, we infer from \eqref{eq.doublingesplicita}
that
$|B_{X}(y,\sqrt{t})| $ and $|B_{X}(x,\sqrt{t})|$ are equivalent, 
and thus the above inequality holds.

\item If $d_X(x,y)  >\sqrt{t}$, then by \eqref{eq.doublingglobalegeneral}
and \eqref{eq.doublingesplicita} 
we have
\begin{align*}
\frac{1}{|B_{X}(y,\sqrt{t})|} &  \leq
\frac{\gamma_{2}}{|B_{X}(y,d_X(x,y))|}\cdot\left(
\frac{d_X(x,y)}{\sqrt{t}}\right)^{q} \\[0.15cm]
& \leq\frac{2^q\,(\gamma_2)^2}{\big|B_{X}\big(x,d_X(x,y)\big)\big|}\cdot
\left(\frac{d_X(x,y)}{\sqrt{t}}\right)^{q} \\[0.15cm]
& \leq \frac{2^q\,(\gamma_2)^2}{\gamma_1}\cdot\,\frac{1}{|B_{X}(x,\sqrt{t})|}\cdot
\left(\frac{d_X(x,y)}{\sqrt{t}}\right)^{q-n}.
\end{align*}
From this, we readily obtain
\eqref{equivalent gaussian}
(see, e.g., \eqref{eq.stimaExpAlto}).
\end{itemize}
This ends the proof.
\end{proof}
Another deep known result that will play a key role in our estimates is the
following `global' version of a well-known result by Sanch\'{e}z-Calle
\cite{SC} (see also \cite[Lemma 3.2]{NSW} and \cite{J}), which compares the
volumes of $B_{X}(x,\rho)$ and $B_{\mathcal{Z}}((x,\xi),\rho)$.
For a proof of this result see \cite[Theorem C]{BiBoBra1}.

\begin{theorem} \label{thm.globalSanchez}
Under the previous assumptions and notation, there exist constants
$\kappa\in(0,1)$ and $c_{1},c_{2}>0$ such that, for every $x\in\mathbb{R}^{n}%
$, every $\xi\in\mathbb{R}^{p}$ and every $\rho>0$ one has the
estimates:
\begin{align}
\big|\{\eta\in\mathbb{R}^{p}:(y,\eta)\in B_{\mathcal{Z}}((x,\xi
),\rho)\}\big|  &  \leq c_{1}\frac{| B_{\mathcal{Z}}((x,\xi),\rho)|}
{|B_{X}(x,\rho)\vert }%
,\quad\text{for all $y\in\mathbb{R}^{n}$},\label{eq.SanchezI}\\[0.3cm]
\big|\{\eta\in\mathbb{R}^{p}:(y,\eta)\in B_{\mathcal{Z}}((x,\xi
),\rho)\}\big| &  \geq c_{2}\frac{\vert B_{\mathcal{Z}}%
((x,\xi),\rho)\vert }{\vert B_{X}(x,\rho)\vert }%
,\quad\text{for all $y\in B_{X}(x,\kappa\rho)$}. \label{eq.SanchezII}%
\end{align}
\end{theorem}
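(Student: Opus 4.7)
The plan is to reduce the slice-measure bounds \eqref{eq.SanchezI}--\eqref{eq.SanchezII} to the classical \emph{local} theorem of Sanch\'ez-Calle \cite{SC} (see also \cite[Lemma 3.2]{NSW} and Jerison \cite{J}) applied to the smooth H\"ormander system $\mathcal{Z}=\{Z_1,\ldots,Z_m\}$ on $\mathbb{R}^N$, and then to propagate that local statement to all $(x,\xi)\in\mathbb{R}^N$ and all $\rho>0$ by exploiting the $D_\lambda$-homogeneity of $\mathcal{Z}$.

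\textbf{Step 1 (Scaling covariance).} First I would show that both inequalities are invariant under the joint substitution
$$((x,\xi),\,y,\,\rho)\;\longmapsto\;(D_\mu(x,\xi),\,\delta_\mu(y),\,\mu\rho),\qquad\mu>0.$$
Using Proposition \ref{Prop properties distances}(i) and the Jacobian identities $\det\delta_\mu=\mu^q$ on $\mathbb{R}^n$, $\det\delta_\mu^{*}=\mu^{q^{*}}$ on $\mathbb{R}^p$ and $\det D_\mu=\mu^Q$ on $\mathbb{R}^N$, together with the identity $Q=q+q^{*}$ from \eqref{eq.dimQGlifting}, a direct computation shows that the slice measure $|\{\eta:(y,\eta)\in B_{\mathcal{Z}}((x,\xi),\rho)\}|$ and the ratio $|B_{\mathcal{Z}}((x,\xi),\rho)|/|B_X(x,\rho)|$ both scale as $\mu^{-q^{*}}$, while the condition $y\in B_X(x,\kappa\rho)$ translates into $\delta_\mu(y)\in B_X(\delta_\mu(x),\kappa\mu\rho)$ by the equivariance of $\delta_\mu$ on $X$-balls. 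Hence \eqref{eq.SanchezI} and \eqref{eq.SanchezII} at $((x,\xi),y,\rho)$ are equivalent to the very same inequalities at the rescaled data, with the \emph{same} constants $c_1,c_2,\kappa$.

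\textbf{Step 2 (Local Sanch\'ez-Calle on $\mathbb{R}^N$).} Since the vector fields $Z_1,\ldots,Z_m$ are smooth and satisfy H\"ormander's condition at every point of $\mathbb{R}^N$ (Theorem \ref{ThmA}), the classical local slice-measure theorem furnishes, for the compact set $K:=\overline{B_{\mathcal{Z}}(0,1)}$, a scale threshold $r_{*}\in(0,1]$ and constants $c_1^{*},c_2^{*},\kappa^{*}>0$ such that \eqref{eq.SanchezI}--\eqref{eq.SanchezII} hold for every $(x,\xi)\in K$ and every $\rho\in(0,r_{*}]$.

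\textbf{Step 3 (Globalization).} For arbitrary $((x,\xi),\rho)\in\mathbb{R}^N\times(0,\infty)$ I would set
$$\mu:=\min\Big(\tfrac{1}{\|(x,\xi)\|},\,\tfrac{r_{*}}{\rho}\Big)$$
(with the convention $\mu:=r_{*}/\rho$ when $(x,\xi)=0$). By construction $\|D_\mu(x,\xi)\|\leq 1$ and $\mu\rho\leq r_{*}$, so the rescaled data $(D_\mu(x,\xi),\mu\rho)$ lie in $K\times(0,r_{*}]$; applying Step 2 yields the estimates with the uniform constants $c_1^{*},c_2^{*},\kappa^{*}$, which are then transported back to the original data by the scaling covariance of Step 1. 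The final theorem follows with $c_1=c_1^{*}$, $c_2=c_2^{*}$, $\kappa=\kappa^{*}$. The main obstacle I anticipate is the careful bookkeeping in Step 1: one must verify that the $\mu^{-q^{*}}$ factor coming from the anisotropic Jacobian of $\delta_\mu^{*}$ on the $\eta$-integration cancels precisely against the ratio of rescaled ambient volumes $|B_{\mathcal{Z}}|/|B_X|$; this cancellation hinges exactly on $Q-q=q^{*}$ and on the compatibility of the projection $\pi:\mathbb{R}^N\to\mathbb{R}^n$ with the pair of dilations $(D_\mu,\delta_\mu)$ given by Proposition \ref{Prop properties distances}(iii).
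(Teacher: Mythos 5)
The paper does not prove Theorem \ref{thm.globalSanchez} at all: it is quoted from \cite[Theorem C]{BiBoBra1}, so the only comparison possible is with that reference, whose proof is indeed of the ``local estimate plus $D_\lambda$-homogeneity'' type you propose. Your Steps 1 and 3 are correct and well organized: under $((x,\xi),y,\rho)\mapsto(D_\mu(x,\xi),\delta_\mu(y),\mu\rho)$ the slice set becomes $\delta_\mu^{\ast}$ of the original slice, so the left-hand sides of \eqref{eq.SanchezI}--\eqref{eq.SanchezII} are multiplied by $\mu^{q^{\ast}}$, and the ratio $|B_{\mathcal{Z}}|/|B_X|$ is multiplied by $\mu^{Q-q}=\mu^{q^{\ast}}$ as well (you wrote $\mu^{-q^{\ast}}$; the sign is immaterial since both sides scale by the same factor), while $y\in B_X(x,\kappa\rho)$ is preserved by Proposition \ref{Prop properties distances}(i). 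The choice $\mu=\min\big(\|(x,\xi)\|^{-1},r_{\ast}/\rho\big)$ then brings any datum into $\overline{B_{\mathcal{Z}}(0,1)}\times(0,r_{\ast}]$ (a compact set, since $d_{\mathcal{Z}}$-balls on the Carnot group are Euclidean-bounded), and the constants transport back unchanged.

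The one step you cannot simply wave at is Step 2. The classical slice-measure results you invoke (\cite{SC}, \cite[Lemma 3.2]{NSW}, \cite{J}) are not statements about ``a smooth H\"ormander system $\mathcal{Z}$ on $\mathbb{R}^N$'' in isolation: they are statements about a system \emph{together with} a lifting relation to $X$ on $\mathbb{R}^n$, and in the literature they are formulated for the Rothschild--Stein lifting, whose added fields have a special (free up to step $s$) structure. Here $\mathcal{Z}$ is the different, global Carnot-group lifting of Theorem \ref{ThmA}, which in general is not the Rothschild--Stein one; what you actually need is a local version of \eqref{eq.SanchezI}--\eqref{eq.SanchezII} valid for an \emph{arbitrary} smooth lifting, i.e. using only the structure \eqref{eq.liftinZjXj} ($Z_j=X_j+R_j$ with $R_j$ acting along $\mathbb{R}^p$) and H\"ormander's condition, uniformly for centers $(x,\xi)$ in a compact set (your $K$ has nonzero $\xi$-components, while several classical formulations fix $\xi=0$). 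Such a general-lifting local estimate is true and is exactly the nontrivial input behind \cite[Theorem C]{BiBoBra1}, but as written your proposal treats it as an off-the-shelf citation, which it is not; either prove this local statement or cite a version that explicitly covers arbitrary liftings. With that point supplied, your argument gives the theorem with $c_1=c_1^{\ast}$, $c_2=c_2^{\ast}$, $\kappa=\kappa^{\ast}$ as claimed.
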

We wish to stress that Theorems \ref{thm.NSWglobal} and
\ref{thm.globalSanchez} contain \emph{global }results, adapted to our context
of homogeneous vector fields. In contrast with this, the original versions of
these results, contained in \cite{NSW}, \cite{SC}, and related to general
systems of H\"{o}rmander's vector fields, express \emph{local }results.

\section{Gaussian estimates for $\Gamma$\label{sec.estimGamma}}
The aim of this section is to prove upper/lower Gaussian estimates for the
global heat kernel $\Gamma(t,x;s,y)$ of $\mathcal{H}$ (or, equivalently,
for $\gamma(t,x,y)$) as defined in \eqref{defiGammaesplicitinaPRE1}). 
Broadly put, our approach is the following: on account of
\eqref{eq.estimGammaquasiGauss2}, 
we already know that $\Gamma$ satisfies the `quasi-Gaussian' estimates
$$
\gamma(t,x,y)\approx t^{-Q/2}\,\int_{\mathbb{R}^{p}}\exp
\bigg(-\frac{\|(x,0)^{-1}\ast(y,\eta)\|^{2}}{t}\bigg)\,\d\eta,
$$
where $Q$ is as in \eqref{eq.dimQGlifting}; 
we then derive `pure' Gaussian
estimates for $\Gamma$ by showing that,
for any $x,y\in\mathbb{R}^{n}$ and any $t>0$, one has
\begin{equation} \label{eq.aimGaussiantoprove}%
t^{-Q/2}\,\int_{\mathbb{R}^{p}}\exp
\bigg(-\frac{\|(x,0)^{-1}\ast(y,\eta)\|^{2}}{t}\bigg)\,\d\eta
\approx\frac{1}{|B_{X}(x,\sqrt{t})|}\cdot\exp\bigg(-\frac{d_{X}^{2}(x,y)}{t}\bigg).
\end{equation}
To begin with, for a future reference, we state the following lemma.
\begin{lemma}
\label{lem.stimeExp}
The following estimates hold true:
\begin{itemize}
\item[\emph{(i)}] for every $\nu>0$ and $\delta\in(0,1)$ there exists $c>0$
such that%
\begin{equation}
\tau^{\nu}e^{-\tau^{2}}\leq c\,e^{-\delta\tau^{2}}\quad\text{for every $\tau
\geq0$}; \label{eq.stimaExpAlto}%
\end{equation}
\item[\emph{(ii)}] for every positive $\nu,\theta\ $there exists $c>0$ such
that%
\begin{equation}
\tau^{-\nu}\geq c\,e^{-\theta\tau^{2}}\quad\text{for every $\tau>0$}.
\label{eq.stimaExpBasso}%
\end{equation}
\end{itemize}
\end{lemma}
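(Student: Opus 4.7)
Both bounds are elementary calculus facts about a positive power multiplied by a Gaussian, and my plan is to reduce each one to the observation that a certain continuous nonnegative function on $[0,\infty)$ attains a finite maximum.

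For part (i), I would move the factor $e^{-\delta\tau^2}$ to the left-hand side and rewrite the desired inequality equivalently as $\tau^\nu e^{-(1-\delta)\tau^2} \leq c$ for every $\tau \geq 0$. Since $\delta \in (0,1)$, the exponent $1-\delta$ is strictly positive, so the function $f(\tau) := \tau^\nu e^{-(1-\delta)\tau^2}$ is continuous on $[0,\infty)$, it vanishes at $\tau=0$ (because $\nu>0$), and it tends to $0$ as $\tau \to +\infty$ (the Gaussian defeats the polynomial). Hence $f$ achieves a finite maximum $M$ on $[0,\infty)$, and the claim follows by taking $c := M$. One can even make the optimal constant explicit by differentiating, but this is not needed for the lemma.

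For part (ii), the inequality $\tau^{-\nu} \geq c\,e^{-\theta\tau^2}$ is equivalent, upon multiplying through by the positive quantity $\tau^\nu e^{\theta\tau^2}$, to $\tau^\nu e^{-\theta\tau^2} \leq 1/c$ for every $\tau>0$. This is exactly the shape treated in part (i), with the positive constant $\theta$ playing the role of $1-\delta$; crucially, the argument does not require $\theta<1$, because the Gaussian still dominates for large $\tau$ regardless of how large $\theta$ is. The function $\tau\mapsto \tau^\nu e^{-\theta\tau^2}$ is again continuous on $[0,\infty)$ and tends to $0$ at both endpoints, so it has a finite maximum $M'$, and the bound holds with $c := 1/M'$.

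There is no genuine obstacle; the only points to keep in mind are that $\nu>0$ is what forces the boundary value at $\tau=0$ to be $0$ in part~(i), and that the positivity of $\theta$ (respectively, of $1-\delta$) is what forces decay at infinity in part~(ii) (respectively, part~(i)). Everything else is a one-line compactness argument.
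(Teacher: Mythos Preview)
Your argument is correct and entirely standard. The paper in fact states this lemma without proof, treating both estimates as elementary facts to be quoted later; your reduction of each inequality to the boundedness of $\tau \mapsto \tau^\nu e^{-a\tau^2}$ for $a>0$ is exactly the intended one-line justification.
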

We then proceed by proving \eqref{eq.aimGaussiantoprove}, and we start with
the upper estimate.
\begin{proposition} \label{prop.upperestimateGamma}
There exists a 
constant $\kappa>1$ such that
\begin{equation}\label{eq.mainUpperestim}%
t^{-Q/2}\,\int_{\mathbb{R}^{p}}\exp
\bigg(-\frac{\|(x,0)^{-1}\ast(y,\eta)\|^{2}}{t}\bigg)\,\d\eta 
\leq\frac{\kappa}{|B_{X}(x,\sqrt{t})|}\,\exp\left(  -\frac{d_{X}^{2}(x,y)}{2t}\right),
\end{equation}
for every
$x,y\in\mathbb{R}^{n}$ and every $t>0$.
\end{proposition}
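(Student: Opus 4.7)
The plan is to reduce the $p$-dimensional integral to a one-dimensional one in the variable $r=\|(x,0)^{-1}\ast(y,\eta)\|=d_{\mathcal{Z}}((x,0),(y,\eta))$ and then exploit the Sanch\'{e}z-Calle estimate of Theorem \ref{thm.globalSanchez} to control the level sets by the $X$-ball volume. More precisely, writing $d:=d_{X}(x,y)$ and
\[
A(r):=\big|\{\eta\in\mathbb{R}^{p}:(y,\eta)\in B_{\mathcal{Z}}((x,0),r)\}\big|,
\]
the projection inequality in Proposition \ref{Prop properties distances}(iii) forces $A(r)=0$ for $r\leq d$, and a Cavalieri / layer-cake rewriting yields
\[
\int_{\mathbb{R}^{p}}\exp\Big(-\frac{\|(x,0)^{-1}\ast(y,\eta)\|^{2}}{t}\Big)\,\d\eta
=\int_{d}^{+\infty}\frac{2r}{t}\,e^{-r^{2}/t}\,A(r)\,\d r.
\]
Then \eqref{eq.SanchezI} together with $|B_{\mathcal{Z}}((x,0),r)|=\omega_{Q}r^{Q}$ from \eqref{eq.measureBYCarnot} gives $A(r)\leq c_{1}\omega_{Q}r^{Q}/|B_{X}(x,r)|$.

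The next step is to replace $|B_{X}(x,r)|$ with $|B_{X}(x,\sqrt{t})|$ via the two-sided doubling \eqref{eq.doublingglobalegeneral}, distinguishing the two regimes $r\geq\sqrt{t}$ and $r<\sqrt{t}$. In the first regime the lower bound $|B_{X}(x,r)|\geq \gamma_{1}(r/\sqrt{t})^{n}|B_{X}(x,\sqrt{t})|$ yields
\[
\frac{r^{Q}}{|B_{X}(x,r)|}\leq \frac{C\,r^{Q-n}\,t^{n/2}}{|B_{X}(x,\sqrt{t})|},
\]
while in the second regime $|B_{X}(x,r)|\geq \gamma_{2}^{-1}(r/\sqrt{t})^{q}|B_{X}(x,\sqrt{t})|$ produces
\[
\frac{r^{Q}}{|B_{X}(x,r)|}\leq \frac{C\,r^{Q-q}\,t^{q/2}}{|B_{X}(x,\sqrt{t})|}
\]
(recalling $Q-q=q^{\ast}\geq 0$). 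Substituting into the one-dimensional integral and performing the change of variables $s=r/\sqrt{t}$, one finds
\[
t^{-Q/2}\int_{\mathbb{R}^{p}}e^{-\|(x,0)^{-1}\ast(y,\eta)\|^{2}/t}\,\d\eta
\leq\frac{C}{|B_{X}(x,\sqrt{t})|}\big(J_{1}+J_{2}\big),
\]
where $J_{1}:=\int_{\max(d,\sqrt{t})/\sqrt{t}}^{+\infty}s^{Q-n+1}e^{-s^{2}}\,\d s$ and $J_{2}$ is an elementary polynomial integral of $s^{Q-q+1}$ on $[d/\sqrt{t},1]$ (empty when $d\geq\sqrt{t}$, bounded by a constant otherwise).

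The last step is to extract the advertised Gaussian factor. If $d\geq\sqrt{t}$, only $J_{1}$ appears and Lemma \ref{lem.stimeExp}(i) with $\delta=1/2$ gives $s^{Q-n+1}e^{-s^{2}}\leq c\,e^{-s^{2}/2}$, whence
\[
J_{1}\leq c\int_{d/\sqrt{t}}^{+\infty}e^{-s^{2}/2}\,\d s
\leq c'\,e^{-d^{2}/(2t)},
\]
using the standard Gaussian tail bound valid since $d/\sqrt{t}\geq 1$. If instead $d<\sqrt{t}$, then $J_{1}+J_{2}$ is bounded by a universal constant and the inequality $d^{2}/(2t)<1/2$ allows us to absorb this constant into $e^{1/2}\cdot e^{-d^{2}/(2t)}$. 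Taking $\kappa$ larger than all the constants produced gives \eqref{eq.mainUpperestim}.

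The main obstacle I foresee is the bookkeeping when matching the doubling exponents $n$ and $q$ against the $r^{Q}$ arising from the volume of the $\mathcal{Z}$-ball: one must verify that the factors of $t$ in the two regimes combine exactly to cancel the $t^{-Q/2}$ prefactor, producing only $|B_{X}(x,\sqrt{t})|^{-1}$ in the denominator. The choice $\delta=1/2$ in Lemma \ref{lem.stimeExp}(i) is forced by the constant $2$ in the exponent of the target inequality, so there is no room to further improve the coefficient by this argument alone.
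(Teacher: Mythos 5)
Your proof is correct, and it is organized around a genuinely different mechanism from the paper's. The paper decomposes $\mathbb{R}^{p}$ into dyadic annuli (based at $d_{X}(x,y)$ when $d_{X}(x,y)>\sqrt{t}$, at $\sqrt{t}$ otherwise), bounds each slab via \eqref{eq.SanchezI}, and sums the resulting geometric series; to pass from $|B_X(x,\cdot)|$ at the various dyadic radii to $|B_X(x,\sqrt t)|$, the paper only uses monotonicity of ball volumes ($\rho\mapsto|B_X(x,\rho)|$ nondecreasing). You instead rewrite the integral by the layer-cake formula as a one-dimensional integral in the CC radius $r$, using \eqref{eq.SanchezI} once to bound the level-set measure $A(r)$, and then you genuinely need the two-sided quantitative doubling \eqref{eq.doublingglobalegeneral} (with both exponents $n$ and $q$) to compare $|B_X(x,r)|$ with $|B_X(x,\sqrt t)|$ across the two regimes $r\gtrless\sqrt t$. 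Your bookkeeping that the powers of $t$ cancel exactly is correct: in the regime $r\geq\sqrt t$ you pick up $t^{n/2}\cdot t^{(Q-n)/2}$ from the change of variables, and in the regime $r<\sqrt t$ you pick up $t^{q/2}\cdot t^{(Q-q)/2}$, each cancelling $t^{-Q/2}$. The exponent $Q-q=q^{\ast}\geq1$ is nonnegative, so the lower-range integral $J_2$ is indeed bounded, and the Gaussian tail estimate $\int_{a}^{\infty}e^{-s^2/2}\,\d s\leq e^{-a^2/2}$ for $a\geq1$ is available in the case $d_X(x,y)\geq\sqrt t$. Your approach buys a slightly cleaner unified treatment of the two regimes and makes the role of the doubling exponents visible; the paper's approach avoids invoking the quantitative doubling and relies only on inclusion of balls, at the cost of handling an explicit series. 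Both yield the same constant $1/2$ in the exponent, as you note, forced by the choice $\delta=1/2$ in Lemma \ref{lem.stimeExp}(i).
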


\begin{proof}
Let $x,y\in\mathbb{R}^{n}$ be arbitrarily fixed, and let $t>0$.
\medskip

\textsc{Case I:} $d_{X}(x,y)>\sqrt{t}$. In this case, for every $n=0,1,2,...$,
we define
\begin{equation} \label{eq.defisetAncase1upper}%
A_{n}:=\big\{  \eta\in\mathbb{R}^{p}:\,2^{n}d_{X}(x,y)\leq\|
(x,0)^{-1}\ast(y,\eta)\| <2^{n+1}d_{X}(x,y)\big\},
\end{equation}
and we observe that, by Proposition \ref{Prop properties distances}-(iii), 
it holds
$\mathbb{R}^{p}=\cup_{n\geq0}A_{n}$. Hence,
\begin{align*}
&  \int_{\mathbb{R}^{p}}\exp\left(-\frac{\|(x,0)^{-1}\ast
(y,\eta)\|^{2}}{t}\right)\d\eta
 = \sum_{n=0}^{+\infty}\int_{A_{n}}
\exp\left(-\frac{\|(x,0)^{-1}\ast(y,\eta)\|^{2}}{t}\right)\d\eta\\[0.2cm]
&  \quad\leq\sum_{n=0}^{+\infty}\exp\left(-\frac{2^{2n}d_{X}^{2}(x,y)}{t}\right)
\cdot|A_{n}|\\[0.2cm]
&  \quad\leq \sum_{n=0}^{+\infty}\exp\left(-\frac{2^{2n}d_{X}^{2}(x,y)}{t}\right)
\cdot\big\vert
\big\{\eta\in\mathbb{R}^{p}:\,(y,\eta)\in
B_{\mathcal{Z}}\big((x,0),2^{n+1}d_{X}(x,y)\big)\big\}\big\vert
\\[0.2cm]
& \quad =:(\bigstar).
\end{align*}
Next, by combining Theorem \ref{thm.globalSanchez} and
\eqref{eq.measureBYCarnot}, for every $n\geq0$ we have
\begin{align*}
&  \big\vert
\big\{\eta\in\mathbb{R}^{p}:\,(y,\eta)\in
B_{\mathcal{Z}}((x,0),2^{n+1}d_{X}(x,y))\big\}\big\vert \leq
c_{1}\,\frac{\big\vert B_{\mathcal{Z}}\big((x,0),2^{n+1}d_{X}(x,y)\big)\big\vert}
{\big\vert B_{X}\big(x,2^{n+1}d_{X}(x,y)\big)\big\vert}\\[0.2cm]
&  \qquad=c_{1}\omega_{Q}\,\frac{2^{Q(n+1)}d_{X}^{Q}(x,y)}
{\big|B_{X}\big(x,2^{n+1}d_{X}(x,y)\big)\big\vert }
\leq c_{1}\omega_{Q}\,\frac{2^{Q(n+1)}d_{X}^{Q}(x,y)}
{|B_{X}(x,d_{X}(x,y))|}\\[0.2cm]
&  \qquad\leq c_{1}\omega_{Q}\,\frac{2^{Q(n+1)}d_{X}^{Q}(x,y)}
{|B_{X}(x,\sqrt{t})\vert },
\end{align*}
since $d_{X}(x,y)>\sqrt{t}$. As a consequence, we obtain
\begin{align*}
(\bigstar) &  \leq c_{1}\omega_{Q}\,\sum_{n=0}^{+\infty}\exp\left(
-\frac{2^{2n}d_{X}^{2}(x,y)}{t}\right)  \cdot\frac{2^{Q(n+1)}d_{X}^{Q}(x,y)}
{\vert B_{X}(x,\sqrt{t})\vert }\\[0.2cm]
&  =2^{Q}\,c_{1}\omega_{Q}\,\frac{t^{Q/2}}{\vert B_{X}(x,\sqrt
{t})\vert }\,\sum_{n=0}^{+\infty}\left(\frac{2^{n}d_{X}(x,y)}{\sqrt{t}%
}\right)  ^{Q}\exp\left(  -\frac{2^{2n}d_{X}^{2}(x,y)}{t}\right)  \\[0.2cm]
&  \quad\text{(by estimate \eqref{eq.stimaExpAlto}, with $\nu=Q$ and, e.g.,
$\delta=1/2$)}\\[0.2cm]
&  \quad\leq\frac{\alpha_{Q}\,t^{Q/2}}{\vert B_{X}(x,\sqrt{t})\vert}\,
\sum_{n=0}^{+\infty}\exp\left(-\frac{2^{2n}d_{X}^{2}(x,y)}{2t}\right)
=:(\bigstar\bigstar),
\end{align*}
for some constant $\alpha_{Q}$ depending on $Q$. On the other hand, since we
are assuming that $d_{X}(x,y)>\sqrt{t}$, for any $n\geq0$ we have
\begin{align*}
\exp\left(  -\frac{2^{2n}d_{X}^{2}(x,y)}{2t}\right)   &  =\exp\left(
-\frac{d_{X}^{2}(x,y)}{2t}\right)  \cdot\exp\left(  -\frac{d_{X}^{2}(x,y)}%
{2t}\cdot\left(  {2^{2n}-1}\right)  \right)  \\[0.2cm]
&  \leq\exp\left(  -\frac{d_{X}^{2}(x,y)}{2t}\right)  \cdot\exp\left(
-\frac{2^{2n}-1}{2}\right)  ,
\end{align*}
from which we derive that
\begin{align*}
(\bigstar\bigstar) &  \leq\frac{\alpha_{Q}\,s^{Q/2}}{\vert B_{X}%
(x,\sqrt{t})\vert }\,\exp\left(-\frac{d_{X}^{2}(x,y)}{2t}\right)
\cdot\sum_{n=0}^{+\infty}\exp\left(-\frac{2^{2n}-1}{2}\right)  \\[0.2cm]
&  =\frac{\alpha_{Q}^{\prime}\,t^{Q/2}}{\vert B_{X}(x,\sqrt{t})\vert }
\,\exp\left(-\frac{d_{X}^{2}(x,y)}{2t}\right).
\end{align*}
Finally, using this last estimate, we obtain
\begin{equation}\label{eq.StepFinaleCase1upper}%
\begin{split}
&  t^{-Q/2}\,\int_{\mathbb{R}^{p}}
\exp\left(-\frac{\|(x,0)^{-1}\ast(y,\eta)\|^{2}}{t}\right)\d\eta 
\leq\frac{\alpha_{Q}^{\prime}\,}{|B_{X}(x,\sqrt{t})|}\,\exp\left(
-\frac{d_{X}^{2}(x,y)}{2t}\right)
\end{split}
\end{equation}
which is precisely \eqref{eq.mainUpperestim} (with 
$\kappa=\alpha_{Q}^{\prime}$).

\medskip\textsc{Case II:} $d_{X}(x,y)\leq\sqrt{t}$. First of all, for every
non-negative integer $n$ we consider the set
\begin{equation}\label{eq.defisetAcase2upper}%
B_{n}:=\big\{  \eta\in\mathbb{R}^{p}:\,2^{n}\sqrt{t}\leq\|
(x,0)^{-1}\ast(y,\eta)\| <2^{n+1}\sqrt{t}\big\}
;
\end{equation}
moreover, we define
\begin{equation} \label{eq.defisetCAcase2}%
B:=\big\{  \eta\in\mathbb{R}^{p}:\,\|
(x,0)^{-1}\ast(y,\eta)\| <\sqrt{t}\big\}.
\end{equation}
Then we have:
\begin{align*}
&  \int_{\mathbb{R}^{p}}\exp\left(-\frac{\|(x,0)^{-1}\ast(y,\eta)\|^{2}}{t}\right)\d\eta
 = \int_{B}\big\{\ldots\big\}\,\d\eta
 +\sum_{n=0}^{+\infty}\int_{B_{n}}\big\{\ldots\big\}  \,\d\eta\\[0.2cm]
&  \quad\leq\left\vert B\right\vert +\sum_{n=0}^{+\infty}\exp\left(
-2^{2n}\right)  \cdot|B_{n}|\\[0.2cm]
&  \quad\leq
\big|\big\{\eta\in\mathbb{R}^{p}:\,(y,\eta)\in
B_{\mathcal{Z}}\big((x,0),\sqrt{t}\big)\big\}\big| \\[0.2cm]
&  \quad\quad\quad
+\sum_{n=0}^{+\infty}\exp\left(-2^{2n}\right)
\cdot
\big|\big\{\eta\in\mathbb{R}^{p}:\,(y,\eta)\in
B_{\mathcal{Z}}\big((x,0),2^{n+1}\sqrt{t}\big)\big\}\big| =:(\bigstar).
\end{align*}
Now, again by Theorem \ref{thm.globalSanchez} and \eqref{eq.measureBYCarnot},
for every $n\geq0$ we have
\begin{align*}
&  \big|\big\{\eta\in\mathbb{R}^{p}:\,(y,\eta)\in B_{\mathcal{Z}}
\big((x,0),2^{n}\sqrt{t}\big)\big\}\big\vert 
\leq c_{1}\,\frac{\big\vert B_{\mathcal{Z}}\big((x,0),2^{n}\sqrt{t}\big)\big\vert}
{\big\vert B_{X}\big(x,2^{n}\sqrt{t}\big)\big\vert}\\[0.2cm]
&  \qquad=c_{1}\omega_{Q}\,\frac{2^{nQ}t^{Q/2}}{|B_{X}(x,2^{n}\sqrt{t})|}
\leq c_{1}\omega_{Q}\,\frac{2^{nQ}\,t^{Q/2}}{| B_{X}(x,\sqrt{t})| }.
\end{align*}
As a consequence, we obtain
\begin{align*}
(\bigstar) &  \leq c_{1}\omega_{Q}\,\frac{t^{Q/2}}{| B_{X}(x,\sqrt{t})|}+
c_{1}\omega_{Q}\cdot\sum_{n=0}^{+\infty}\exp\left(
-2^{2n}\right)\,\frac{2^{(n+1)Q}t^{Q/2}}{| B_{X}(x,\sqrt{t})|}\\[0.2cm]
&  \quad=c_{1}\omega_{Q}\,\frac{t^{Q/2}}{| B_{X}(x,\sqrt{t})|}
\cdot\left(  1+\sum_{n=0}^{+\infty}\exp\left(-2^{2n}\right)\,2^{Q(n+1)}\right)  \\[0.2cm]
&  \quad=\frac{\beta_{Q}\,t^{Q/2}}{| B_{X}(x,\sqrt{t})|}=(\bigstar\bigstar).
\end{align*}
On the other hand, since we are assuming that $d_{X}(x,y)\leq\sqrt{t}$, we
have
$$
\exp\left(-\frac{d_{X}^{2}(x,y)}{2t}\right)  \geq e^{-1/2},
$$
from which we derive that
$$
(\bigstar\bigstar)\leq\beta_{Q}^{\prime}\,\frac{t^{Q/2}}{| B_{X}(x,\sqrt{t})|}
\cdot\exp\left(-\frac{d_{X}^{2}(x,y)}{2t}\right)  .
$$
Finally, using this last estimate, we obtain
\begin{equation} \label{eq.StepFinalecase2upper}
\begin{split}
&  t^{-Q/2}\,\int_{\mathbb{R}^{p}}
\exp\left( -\frac{\|(x,0)^{-1}\ast(y,\eta)\|^{2}}{t}\right)\d\eta
\leq\frac{\beta_{Q}^{\prime}}{| B_{X}(x,\sqrt{t})|}\cdot\exp\left(-\frac{d_{X}^{2}(x,y)}{2t}\right),
\end{split}
\end{equation}
and this is again \eqref{eq.mainUpperestim}. Gathering
\eqref{eq.StepFinaleCase1upper} and \eqref{eq.StepFinalecase2upper}, we
conclude that estimate \eqref{eq.mainUpperestim} holds for every
$x,y\in\mathbb{R}^{n}$ and every $t>0$ by choosing 
$$\kappa:=\max\{\alpha_{1}^{\prime},\beta_{Q}^{\prime}\}>1.$$ 
This ends the proof.
\end{proof}
In order to prove lower estimate of $\Gamma$, we need the following property.
\begin{lemma}
\label{lem.metricoReverseSanchez}
With the above notation and assumption, let
$b>a>0$ be fixed real numbers, and let $x,y\in\mathbb{R}^{n}$ satisfying%
\begin{equation}
d_{X}(x,y)<a. \label{eq.assumptiondxyleqa}%
\end{equation}
Then, for every $\xi\in\mathbb{R}^{p}$ there exists $\overline{\eta}%
=\overline{\eta}_{x,y,\xi}\in\mathbb{R}^{p}\setminus\{0\}$ such that
\begin{equation} \label{eq.inclusioncoronapalla}%
\begin{split}
 & \big\{\eta\in\mathbb{R}^{p}:a\leq d_{\mathcal{Z}}\big((x,\xi),(y,\eta)\big)<b\big\} 
  \supseteq\big\{\eta\in\mathbb{R}^{p}:(y,\eta)\in B_{\mathcal{Z}}
  \big((y,\overline{\eta}),\tfrac{1}{2}(b-a)\big)\big\}  . 
\end{split}
\end{equation}

\end{lemma}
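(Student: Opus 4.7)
Set $\rho := (a+b)/2$, the midpoint of the annulus $[a,b]$. My plan is to locate a point $\bar\eta \in \mathbb{R}^p \setminus \{0\}$ satisfying the precise equality
$$d_{\mathcal{Z}}\big((x,\xi), (y,\bar\eta)\big) = \rho;$$
once such $\bar\eta$ is in hand, the claimed inclusion drops out of two applications of the triangle inequality for $d_{\mathcal{Z}}$, since the radius $(b-a)/2$ of the inner ball is exactly the distance from $\rho$ to each endpoint $a$ and $b$.

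\textbf{Existence of $\bar\eta$.} Consider the continuous function
$$f(\eta) := d_{\mathcal{Z}}\big((x,\xi), (y,\eta)\big), \qquad f : \mathbb{R}^p \to [0, \infty).$$
Since $d_X(x,y) < a < \rho$, the projection property \eqref{item.pjectionBY} applied to the $d_X$-ball $B_X(x,\rho)$ produces some $\eta_0 \in \mathbb{R}^p$ with $f(\eta_0) < \rho$. At the other end, because $d_{\mathcal{Z}}$ is topologically equivalent to the Euclidean metric on $\mathbb{R}^N$, every $d_{\mathcal{Z}}$-ball $B_{\mathcal{Z}}((x,\xi), R)$ is Euclidean-bounded (alternatively, by Proposition \ref{Prop properties distances}-(iv) it has finite Lebesgue measure), hence $f(\eta) \to +\infty$ as $|\eta|_{\mathrm{euc}} \to \infty$. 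Select a continuous path $\gamma : [0,1] \to \mathbb{R}^p \setminus \{0\}$ joining $\eta_0$ (or, if $\eta_0 = 0$, a small Euclidean perturbation of $\eta_0$ at which $f$ is still below $\rho$, guaranteed by continuity of $f$) to a point where $f > \rho$, and avoiding the origin throughout; this is elementary for any $p \geq 1$. The intermediate value theorem applied to $f \circ \gamma$ yields some $t^* \in [0,1]$ with $f(\gamma(t^*)) = \rho$, and we set $\bar\eta := \gamma(t^*) \in \mathbb{R}^p \setminus \{0\}$.

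\textbf{Closing the inclusion.} Suppose $\eta \in \mathbb{R}^p$ satisfies $(y,\eta) \in B_{\mathcal{Z}}((y,\bar\eta), (b-a)/2)$, i.e.\ $d_{\mathcal{Z}}((y,\bar\eta), (y,\eta)) < (b-a)/2$. Two uses of the triangle inequality then give
$$d_{\mathcal{Z}}\big((x,\xi), (y,\eta)\big) < \rho + \tfrac{b-a}{2} = b \quad\text{and}\quad d_{\mathcal{Z}}\big((x,\xi), (y,\eta)\big) > \rho - \tfrac{b-a}{2} = a,$$
so $\eta$ belongs to the annular set on the left-hand side of \eqref{eq.inclusioncoronapalla}.

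\textbf{Main difficulty.} The only substantive ingredient is the \emph{properness} assertion $f(\eta) \to +\infty$ as $|\eta|_{\mathrm{euc}} \to \infty$, which is ultimately a consequence of the (known) topological equivalence between $d_{\mathcal{Z}}$ and the Euclidean distance on $\mathbb{R}^N$. The side condition $\bar\eta \neq 0$ carries no real geometric content and is settled purely by the path-selection in the IVT argument.
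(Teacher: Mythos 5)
Your overall strategy matches the paper's: locate a point $\overline{\eta}$ at exact $d_{\mathcal{Z}}$-distance $\rho=(a+b)/2$ from $(x,\xi)$, then close via two triangle inequalities. The only genuine difference is the mechanism for producing $\overline{\eta}$. The paper starts from $\eta_0\ne 0$ with $(y,\eta_0)\in B_{\mathcal{Z}}((x,\xi),a)$, moves along the one-parameter curve $\lambda\mapsto\delta^{\ast}_{\lambda}(\eta_0)$, and uses the homogeneity identity $d_{\mathcal{Z}}(0,D_{\lambda}(w))=\lambda\, d_{\mathcal{Z}}(0,w)$ to show $g(\lambda)\to\infty$; this keeps $\overline{\eta}=\delta^{\ast}_{\overline\lambda}(\eta_0)\ne 0$ automatically. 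You instead invoke a global coercivity of $\eta\mapsto d_{\mathcal{Z}}((x,\xi),(y,\eta))$ together with a path-avoiding-the-origin selection. That works, but it is more ad hoc regarding the constraint $\overline\eta\ne 0$.

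There is, however, a gap in how you justify the coercivity (``$f(\eta)\to+\infty$ as $|\eta|_{\mathrm{euc}}\to\infty$''). Neither of the two reasons you give is sufficient: topological equivalence of two metrics does \emph{not} imply that balls of one are bounded in the other (on $\mathbb{R}$ the metric $d(x,y)=|\arctan x-\arctan y|$ is topologically equivalent to the Euclidean one, yet its balls of radius $\pi/2$ are all of $\mathbb{R}$); and finite Lebesgue measure of a set does \emph{not} imply Euclidean boundedness. The correct justification is exactly the homogeneity argument that the paper uses: $\|u\|=d_{\mathcal{Z}}(u,0)$ is continuous, vanishes only at $0$, and is $D_{\lambda}$-homogeneous of degree $1$, which forces $\|u\|\to\infty$ as $|u|_{\mathrm{euc}}\to\infty$ (equivalently, $d_{\mathcal{Z}}$-balls are Euclidean-bounded; the paper proves the $d_X$-analogue this way in Proposition \ref{prop.SegmentProperty}, and for $d_{\mathcal{Z}}$ this is standard on Carnot groups, e.g.\ \cite{BLUlibro}). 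Once you replace your two stated justifications with a homogeneity or compactness-of-the-unit-sphere argument (or a citation to the Carnot-group literature), the proof is correct; as written, the key coercivity step is asserted for unsound reasons, even though the conclusion is true.
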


\begin{proof}
Since $y\in B_{X}(x,a)$, if $\xi\in\mathbb{R}^{p}$ is arbitrarily fixed, by
\eqref{item.pjectionBY} there exists
\begin{equation} \label{eq.etazeroinsemeaperto}
\eta_{0}\in\big\{\eta\in\mathbb{R}^{p}:(y,\eta)\in B_{\mathcal{Z}}
\big((x,\xi),a\big)\big\}.%
\end{equation}
In particular, since the set in the right-hand side of
\eqref{eq.etazeroinsemeaperto} is open, we can assume that $\eta_{0}\neq0$.
\vspace*{0.07cm} We then consider the function $g:[1,+\infty)\rightarrow
\mathbb{R}$ defined as follows:
$$
g(\lambda):=d_{\mathcal{Z}}\big((x,\xi),(y,\delta_{\lambda}^{\ast}(\eta_{0}))\big),$$
where $\delta_{\lambda}^{\ast}(\eta)=(\lambda^{\tau_{1}}\eta_{1}%
,\ldots,\lambda^{\tau_{p}}\eta_{p})$ is as in \eqref{eq.expressionDlambda}.
Clearly, we have that $g$ is con\-ti\-nuo\-us
on the whole of $[1,+\infty)$; moreover, from
\eqref{eq.etazeroinsemeaperto} we infer that
\begin{equation} \label{eq.goneleqa}%
g(1)<a.
\end{equation}
We now claim that
\begin{equation} \label{eq.gdiverge}%
\lim_{\lambda\rightarrow+\infty}g(\lambda)=+\infty.
\end{equation}
To prove \eqref{eq.gdiverge} we first notice that, by triangle's inequality,
we have
\begin{equation} \label{eq.triangleg}
g(\lambda)\geq d_{\mathcal{Z}}\big((0,0),(y,\delta_{\lambda}^{\ast}(\eta_{0}))\big) 
 - d_{\mathcal{Z}}\big((0,0),(x,\xi)\big)
\qquad(\text{for all $\lambda\geq1$});
\end{equation}
moreover, since the vector fields
$Z_{1},\ldots,Z_{m}$ are $D_{\lambda}$-homogeneous of degree
$1$, by Proposition \ref{Prop properties distances}-(i) we deduce that
\begin{equation} \label{eq.tousedZinfinity}%
\begin{split}
&  d_{\mathcal{Z}}\big((0,0),(y,\delta_{\lambda}^{\ast}(\eta_{0}))\big)
=d_{\mathcal{Z}}\big((0,0),(\delta_{\lambda}(\delta_{1/\lambda}(y),
\delta_{\lambda}^{\ast}(\eta_{0}))\big)  \\[0.2cm]
&  \qquad(\text{setting $y_{\lambda}=\delta_{1/\lambda}(y)$})\\[0.2cm]
&  \qquad=d_{\mathcal{Z}}\big(  (0,0),D_{\lambda}(y_{\lambda},\eta_{0}))\big)  \\[0.2cm]
&  \qquad=\lambda\,d_{\mathcal{Z}}\big(  (0,0),(y_{\lambda},\eta_{0})\big)
.
\end{split}
\end{equation}
Since $y_{\lambda}=\delta_{1/\lambda}(y)\rightarrow0\in\mathbb{R}^{n}$ as
$\lambda\rightarrow+\infty$, and since $\eta_{0}\neq0$, we have%
$$
\lim_{\lambda\rightarrow+\infty}d_{\mathcal{Z}}
\big((0,0),(y_{\lambda},\eta_{0})\big)  =
 d_{\mathcal{Z}}\big((0,0),(0,\eta_{0})\big)  >0;
$$
as a consequence, taking the limit as $\lambda\rightarrow+\infty$ in
\eqref{eq.tousedZinfinity} we obtain
\begin{equation} \label{eq.dZdivergepolozero}%
\lim_{\lambda\rightarrow+\infty}d_{\mathcal{Z}}\big(  (0,0),(y,\delta
_{\lambda}^{\ast}(\eta_{0}))\big)  =+\infty.
\end{equation}
Gathering \eqref{eq.dZdivergepolozero} and \eqref{eq.triangleg}, we obtain
the claimed \eqref{eq.gdiverge}. \vspace*{0.07cm}

Next, using the continuity of $g$, together with \eqref{eq.goneleqa} and
\eqref{eq.gdiverge}, we infer the existence of a suitable 
$\overline{\lambda}\in(1,+\infty)$ such that
\begin{equation} \label{eq.choicelambda}%
g(\overline{\lambda})=d_{\mathcal{Z}}\big((x,\xi),
(y,\delta_{\overline{\lambda}}^{\ast}(\eta_{0}))\big)  =
\frac{b+a}{2}. 
\end{equation}
Setting $\overline{\eta}:=\delta_{\overline{\lambda}}^{\ast}(\eta_{0})$, we
prove \eqref{eq.inclusioncoronapalla} by showing the stronger inclusion
\begin{equation} \label{eq.inclusionballs}%
\big\{  z\in\mathbb{R}^{N}:\,a\leq d_{\mathcal{Z}}(  (x,\xi),z)
<b\big\}  \supseteq B_{\mathcal{Z}}\big(  (x,\overline{\eta}),\tfrac{1}%
{2}(b-a)\big)  . 
\end{equation}
To this end, let $u\in B_{\mathcal{Z}}\big(  (y,\overline{\eta}),\tfrac{1}%
{2}(b-a)\big)  $ be fixed. On the one hand, we have
$$
d_{\mathcal{Z}}\big(  (x,\xi),u\big)  \leq d_{\mathcal{Z}}\big(
(x,\xi),(y,\overline{\eta})\big)  +d_{\mathcal{Z}}\big(  (y,\overline{\eta
}),u\big)  <\frac{b+a}{2}+\frac{b-a}{2}=b;
$$
on the other hand, since we also have
$$
d_{\mathcal{Z}}\big((x,\xi),u\big)\geq d_{\mathcal{Z}}\big(
(x,\xi),(y,\overline{\eta})\big)  -d_{\mathcal{Z}}\big(  u,(y,\overline
{\eta})\big)  >\frac{b+a}{2}-\frac{b-a}{2}=a,
$$
we conclude that \eqref{eq.inclusionballs} holds. This ends the proof.
\end{proof}
We can now prove the estimate from below in \eqref{eq.aimGaussiantoprove}.
\begin{proposition} \label{prop.lowerestimateGamma} 
There exists a constant
$\vartheta>1$ such that 
\begin{equation} \label{eq.mainLowerestim}%
\begin{split}
& t^{-Q/2}\,\int_{\mathbb{R}^{p}}\exp
\left(-\frac{\|(x,0)^{-1}\ast(y,\eta)\|^{2}}{t}\right)\d\eta
\geq\frac{1}{\vartheta
\,|B_{X}(x,\sqrt{t})|}\,\exp\left(  -\frac{\vartheta\,d_{X}^{2}(x,y)}{t}\right),
\end{split}
\end{equation}
for every $x,y\in\mathbb{R}^{n}$ and
every $t>0$.
\end{proposition}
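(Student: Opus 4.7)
My plan is to invert the dyadic argument used for Proposition \ref{prop.upperestimateGamma}: instead of summing contributions from shells expanding away from $d_{X}(x,y)$, I will identify a \emph{single} well-chosen region of $\eta$'s on which the integrand is bounded below by the target Gaussian and whose measure is controlled from below via the Sanchez\--Calle estimate \eqref{eq.SanchezII}. As in the upper bound, the argument splits into two cases according to the size of $d_{X}(x,y)$ relative to $\sqrt{t}$, the threshold being governed by the constant $\kappa\in(0,1)$ of Theorem \ref{thm.globalSanchez}.

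In the easy case $d_{X}(x,y)\leq\kappa\sqrt{t}$, I integrate only over
$$B:=\big\{\eta\in\mathbb{R}^{p}:(y,\eta)\in B_{\mathcal{Z}}((x,0),\sqrt{t})\big\},$$
on which the integrand exceeds $e^{-1}$. The smallness of $d_{X}(x,y)$ ensures that \eqref{eq.SanchezII} applies with $(x,\xi)=(x,0)$ and $\rho=\sqrt{t}$, giving $|B|\geq c_{2}\omega_{Q}\,t^{Q/2}/|B_{X}(x,\sqrt{t})|$; since $\exp(-\vartheta\,d_{X}^{2}(x,y)/t)\leq 1$ in this regime, \eqref{eq.mainLowerestim} follows for any sufficiently large $\vartheta$.

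The hard case is $d_{X}(x,y)>\kappa\sqrt{t}$, where \eqref{eq.SanchezII} cannot be used at $(x,0)$ with $\rho=\sqrt{t}$ since $y$ lies outside the ``good core'' $B_{X}(x,\kappa\sqrt{t})$. Setting $r:=d_{X}(x,y)$, I will apply Lemma \ref{lem.metricoReverseSanchez} with $\xi=0$, $a=2r$, $b=4r$ to produce some $\overline{\eta}\neq 0$ with
$$\big\{\eta:(y,\eta)\in B_{\mathcal{Z}}((y,\overline{\eta}),r)\big\}\subseteq\big\{\eta:2r\leq\|(x,0)^{-1}\ast(y,\eta)\|<4r\big\}.$$
On the smaller set the integrand exceeds $e^{-16r^{2}/t}$, and since $y\in B_{X}(y,\kappa r)$ trivially, \eqref{eq.SanchezII} applied at the shifted centre $(y,\overline{\eta})$ bounds the measure of this set from below by $c_{2}\omega_{Q}\,r^{Q}/|B_{X}(y,r)|$. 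Using the global doubling \eqref{eq.doublingesplicita} to replace $B_{X}(y,r)$ with $B_{X}(x,r)$ and the right inequality of \eqref{eq.doublingglobalegeneral} to compare $|B_{X}(x,r)|$ with $|B_{X}(x,\sqrt{t})|$, I obtain
$$t^{-Q/2}\int_{\mathbb{R}^{p}}\exp\!\left(-\tfrac{\|(x,0)^{-1}\ast(y,\eta)\|^{2}}{t}\right)d\eta\geq\frac{C}{|B_{X}(x,\sqrt{t})|}\big(r/\sqrt{t}\big)^{Q-q}e^{-16r^{2}/t}.$$
Under (H3) one has $Q-q=q^{\ast}\geq 1$ and $r>\kappa\sqrt{t}$, so $(r/\sqrt{t})^{Q-q}\geq\kappa^{q^{\ast}}$; choosing $\vartheta>16$ (and $\vartheta\geq 1/(C\kappa^{q^{\ast}})$) then yields \eqref{eq.mainLowerestim}.

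The main technical obstacle is precisely the relocation delivered by Lemma \ref{lem.metricoReverseSanchez}. A naive attempt to apply \eqref{eq.SanchezII} at the centre $(x,0)$ would force the radius to be of order $r/\kappa$ so as to satisfy the core condition $y\in B_{X}(x,\kappa\rho)$, and the resulting Gaussian weight on such an enlarged ball would be of the \emph{wrong order} for any fixed $\vartheta$. By shifting the centre to $(y,\overline{\eta})$, placed in a calibrated annulus around $(x,0)$, the core condition becomes free of charge while the integrand remains of size $e^{-Cr^{2}/t}$; the only remaining cost is the geometric factor $(r/\sqrt{t})^{Q-q}$, which is harmless thanks to (H3).
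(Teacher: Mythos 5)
Your proposal is correct and follows the same structural route as the paper: split on the size of $d_{X}(x,y)$ relative to $\sqrt{t}$, restrict the integral to a single region where the integrand is comparable to the target Gaussian, and bound that region's measure from below via \eqref{eq.SanchezII}, using Lemma \ref{lem.metricoReverseSanchez} to relocate the centre to $(y,\overline{\eta})$ so the ``core'' condition in Sanchez--Calle becomes trivial. You do make two small refinements. First, by taking the threshold at $\kappa\sqrt{t}$ instead of $\sqrt{t}$, your easy case can invoke \eqref{eq.SanchezII} directly at $(x,0)$ with $\rho=\sqrt{t}$, dispensing with Lemma \ref{lem.metricoReverseSanchez} there; the paper instead uses the shifted-centre construction in both cases. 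Second, in the hard case you retain the full factor $r^{Q}/t^{Q/2}$ rather than crudely replacing $r^{Q}$ by $t^{Q/2}$ as the paper does, so your geometric factor comes out as $(r/\sqrt{t})^{Q-q}$ with a \emph{nonnegative} exponent (thanks to (H3)), and you avoid the exponential-absorption step \eqref{eq.stimaExpBasso}; the paper's version produces $(r/\sqrt{t})^{-q}$ and must pay an extra $e^{-4r^{2}/t}$ to absorb it. One small imprecision: the inequality $|B_{X}(x,r)|\leq\gamma_{2}(r/\sqrt{t})^{q}|B_{X}(x,\sqrt{t})|$ that you quote from the right-hand side of \eqref{eq.doublingglobalegeneral} requires $r>\sqrt{t}$; in the middle regime $\kappa\sqrt{t}<r\leq\sqrt{t}$ you should instead use monotonicity $|B_{X}(x,r)|\leq|B_{X}(x,\sqrt{t})|$, which yields the slightly weaker (but still bounded-below) factor $(r/\sqrt{t})^{Q}\geq\kappa^{Q}$ rather than $(r/\sqrt{t})^{Q-q}\geq\kappa^{q^{*}}$. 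This does not affect the conclusion, only the precise constant.
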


\begin{proof}
Let $x,y\in\mathbb{R}^{n}$ be arbitrarily fixed, and let $t>0$. \medskip

\textsc{Case I:} $d_{X}(x,y)>\sqrt{t}$. In this case, we consider the set
$$
A:=\big\{\eta\in\mathbb{R}^{p}:\,2d_{X}(x,y)\leq
\| (x,0)^{-1}\ast(y,\eta)\| <4d_{X}(x,y)\big\}  .
$$
By applying Lemma \ref{lem.metricoReverseSanchez} (with $a:=2d_{X}%
(x,y)>d_{X}(x,y)$ and $b:=2a$), one has
\begin{equation}  \label{inclusion A}%
A\supseteq\big\{  \eta\in\mathbb{R}^{p}:(y,\eta)\in B_{\mathcal{Z}}\big(
(y,\overline{\eta}),d_{X}(x,y)\big)\big\} 
\end{equation}
(for a suitable $\overline{\eta}=\overline{\eta}_{x,y}\in\mathbb{R}%
^{p}\setminus\{0\}$); as a consequence, we obtain
\begin{align*}
&  \int_{\mathbb{R}^{p}}\exp\left(  -\frac{\|(x,0)^{-1}\ast(y,\eta)\|^{2}}{t}\right)\d\eta
\geq\int_{A}\exp\left(
-\frac{\|(x,0)^{-1}\ast(y,\eta)\|^{2}}{t}\right)\d\eta\\[0.2cm]
&  \quad\text{(since $\|(x,0)^{-1}\ast(y,\eta)\|^{2}\leq 16\,d_{X}^{2}(x,y)$ for $\eta\in A$)}\\[0.2cm]
&  \quad\geq\exp\left(-\frac{16\,d_{X}^{2}(x,y)}{t}\right)  \cdot |A|\\[0.2cm]
&  \quad\geq\exp\left(  -\frac{16\,d_{X}^{2}(x,y)}{t}\right)  \cdot\big\vert
\big\{  \eta\in\mathbb{R}^{p}:(y,\eta)\in B_{\mathcal{Z}}\big(
(y,\overline{\eta}),d_{X}(x,y)\big)\big\}\big\vert =:(\bigstar).
\end{align*}
On the other hand, by using Theorem \ref{thm.globalSanchez} (with the choice
$(x,\xi)=(y,\overline{\eta})$) and \eqref{eq.measureBYCarnot}, we get
\begin{align*}
&  \big\vert
\big\{  \eta\in\mathbb{R}^{p}:(y,\eta)\in B_{\mathcal{Z}}\big(
(y,\overline{\eta}),d_{X}(x,y)\big)\big\}\big\vert \geq
c_{2}\,\frac{\big\vert B_{\mathcal{Z}}\big((y,\overline{\eta}),d_{X}(x,y)\big)\big\vert}
{\big\vert B_{X}\big(y,d_{X}(x,y)\big)\big\vert }\\[0.2cm]
&  \qquad=c_{2}\omega_{Q}\,\frac{d_{X}^{Q}(x,y)}{\big| B_{X}\big(y,d_{X}(x,y)\big)\big\vert}
\\[0.2cm]
&  \qquad\text{(since we are assuming that $d_{X}(x,y)>\sqrt{t}$)}\\[0.2cm]
&  \qquad>c_{2}\omega_{Q}\,\frac{t^{Q/2}}{\big| B_{X}\big(y,d_{X}(x,y)\big)\big\vert},
\end{align*}
from which we derive the estimate
\begin{align*}
(\bigstar) &  \geq c_{2}\omega_{Q}\,\frac{t^{Q/2}}{\big| B_{X}\big(y,d_{X}(x,y)\big)\big\vert}
\cdot\exp\left(  -\frac{16\,d_{X}^{2}(x,y)}%
{t}\right)  \\[0.2cm]
&  \text{(since $B_{X}\left(  y,d_{X}(x,y)\right)  \subseteq B_{X}\left(
x,2d_{X}(x,y)\right)  $)}\\[0.2cm]
&  \geq{c_{2}\omega_{Q}}\,\frac{t^{Q/2}}{\big| B_{X}\big(
x,2d_{X}(x,y)\big)  \big\vert }\cdot\exp\left(  -\frac{16\,d_{X}^{2}%
(x,y)}{t}\right)  =:(\bigstar\bigstar).
\end{align*}
We now observe that, since $X_{1},\ldots,X_{m}$ are $\delta_{\lambda}%
$-homogeneous of degree $1$, and since we are assuming that $d_{X}%
(x,y)>\sqrt{t}$, we can apply \eqref{eq.doublingglobalegeneral}, getting%
$$
\big| B_{X}\big(
x,2d_{X}(x,y)\big)  \big\vert \leq
\gamma_{2}\,\vert B_{X}(  x,\sqrt{t})\vert \cdot\left(
\frac{2d_{X}(x,y)}{\sqrt{t}}\right)^{q},
$$
where $q$ is as in \eqref{eq.defiqH1}. As a consequence, we deduce that
\begin{align*}
(\bigstar\bigstar) &  \geq\frac{c_{2}\omega_{Q}}{2^{q}\gamma_{2}}%
\,\frac{t^{Q/2}}{\vert B_{X}(  x,\sqrt{t})\vert}%
\cdot\left(  \frac{d_{X}(x,y)}{\sqrt{t}}\right)^{-q}\exp\left(
-\frac{16\,d_{X}^{2}(x,y)}{t}\right)  \\[0.2cm]
&  \text{(by estimate \eqref{eq.stimaExpBasso}, with $\nu=q$ and, e.g.,
$\theta=4$)}\\[0.2cm]
&  \geq\frac{t^{Q/2}}{\alpha_{q,Q}\,\vert B_{X}(  x,\sqrt{t})\vert}
\,\exp\left(  -\frac{20\,d_{X}^{2}(x,y)}{t}\right)  ,
\end{align*}
for some constant $\alpha_{q,Q}$ depending on $q,Q.$ Finally, by exploiting
this last e\-sti\-ma\-te, we obtain
\begin{equation} \label{eq.StepFinalecase1lower}%
\begin{split}
& t^{-Q/2}\,\int_{\mathbb{R}^{p}}\exp\left(-\frac{\|(x,0)^{-1}\ast(y,\eta)\|^{2}}{t}\right)\d\eta
\\[0.2cm]
&  \qquad\qquad\geq t^{-Q/2}\cdot\left[\frac{t^{Q/2}}{\alpha_{q,Q}\,
\vert B_{X}(x,\sqrt{t})\vert}
\,\exp\left(  -\frac{20\,d_{X}^{2}(x,y)}{t}\right)\right]  \\[0.2cm]
&  \qquad\qquad\text{(setting $\vartheta_{1}=\max\{\alpha_{q,Q},20\}$)} \\[0.2cm]
&  \qquad\qquad\geq\frac{1}{\vartheta_{1}\,\vert B_{X}(  x,\sqrt{t})\vert}
\,\exp\left(  -\frac{\vartheta_{1}\,d_{X}^{2}(x,y)}{t}\right),
\end{split}
\end{equation}
which exactly the desired \eqref{eq.mainLowerestim} (with $\vartheta
=\vartheta_{1}>1$). \medskip

\textsc{Case II:} $d_{X}(x,y)\leq\sqrt{t}$. The proof is similar to that of
\textsc{Case I}, letting now
$$
A= \big\{\eta\in\mathbb{R}^{p}:\,2\sqrt{t}\leq
\| (x,0)^{-1}\ast(y,\eta)\| < 4\sqrt{t}\big\}  .
$$
Applying Lemma \ref{lem.metricoReverseSanchez} (with $a:=2\sqrt{t}>d_{X}(x,y)$
and $b:=2a$), we get
$$
A\supseteq\big\{  \eta\in\mathbb{R}^{p}:(y,\eta)\in B_{\mathcal{Z}}(
(y,\overline{\eta}),\sqrt{t}) \big\}  \neq\varnothing
$$
(for a suitable $\overline{\eta}=\overline{\eta}_{x,y}\in\mathbb{R}^{p}\setminus\{0\}$); 
as a consequence, we obtain
\begin{align*}
&  \int_{\mathbb{R}^{p}}\exp\left(-\frac{\|(x,0)^{-1}\ast(y,\eta)\|^{2}}{t}\right)\d\eta
\geq \int_{A}\exp\left(-\frac{\|(x,0)^{-1}\ast(y,\eta)\|^{2}}{t}\right)\d\eta\\[0.2cm]
\quad &  \geq e^{-16}\cdot\left\vert A\right\vert \geq e^{-16}\cdot\big\vert
\big\{  \eta\in\mathbb{R}^{p}:(y,\eta)\in B_{\mathcal{Z}}(
(y,\overline{\eta}),\sqrt{t}) \big\}\big\vert =:(\bigstar).
\end{align*}
On the other hand, by using Theorem \ref{thm.globalSanchez} (with the choice
$(x,\xi)=(y,\overline{\eta})$) and \eqref{eq.measureBYCarnot}, we get
\begin{align*}
&  \big\vert
\big\{  \eta\in\mathbb{R}^{p}:(y,\eta)\in B_{\mathcal{Z}}(
(y,\overline{\eta}),\sqrt{t}) \big\}\big\vert \geq
c_{2}\,\frac{\big\vert B_{\mathcal{Z}}((y,\overline{\eta}),\sqrt{t})\big\vert}
{|B_{X}(y,\sqrt{t})\vert }\\[0.1cm]
&  \qquad=c_{2}\omega_{Q}\,\frac{t^{Q/2}}{\vert B_{X}(y,\sqrt{t})\vert },
\end{align*}
from which we derive the estimate (remind that we are assuming $d_{X}(x,y)\leq\sqrt{t}$)
\begin{align*}
(\bigstar) &  \geq\frac{c_{2}\omega_{Q}}{e^{16}}\,\frac{t^{Q/2}}
{\vert B_{X}(y,\sqrt{t})\vert }\\[0.2cm]
&  \text{\big(since $B_{X}(y,\sqrt{t})  \subseteq B_{X}(
x,d_{X}(x,y)+\sqrt{t})  \subseteq B_{X}(  x,2\sqrt{t})$\big)}\\[0.2cm]
&  \geq\frac{c_{2}\omega_{Q}}{e^{16}}\,\frac{t^{Q/2}}{
\vert B_{X}(x,2\sqrt{t})\vert }=:(\bigstar\bigstar).
\end{align*}
By \eqref{eq.doublingesplicita}, we have%
$$
\vert B_{X}(x,2\sqrt{t})\vert  \leq\gamma_{2}2^{q}\,|B_{X}(x,\sqrt
{t})|\qquad(\text{where $q$ is as in \eqref{eq.defiqH1}});
$$
as a consequence, we deduce that
$$
(\bigstar\bigstar)\geq\frac{c_{2}\omega_{Q}}{2^{q}e^{16}}\,\frac{t^{Q/2}%
}{|B_{X}(x,\sqrt
{t})|}\geq\frac{t^{Q/2}%
}{\beta_{q,Q}\,|B_{X}(x,\sqrt{t})|}%
\cdot\exp\left(  -\frac{d_{X}^{2}(x,y)}{t}\right)  ,
$$
for some constant $\beta_{q,Q}$ depending on $q,Q$.
Using this last estimate, we get
\begin{equation} \label{eq.StepFinalecase2lower}%
\begin{split}
&  t^{-Q/2}\,\int_{\mathbb{R}^{p}}\exp\left(-\frac{\|(x,0)^{-1}
 \ast(y,\eta)\|^{2}}{t}\right)\d\eta \\[0.2cm]
&  \qquad\qquad\geq t^{-Q/2}\cdot\left[
\frac{t^{Q/2}%
}{\beta_{q,Q}\,|B_{X}(x,\sqrt{t})|}%
\cdot\exp\left(  -\frac{d_{X}^{2}(x,y)}{t}\right)\right] \\[0.2cm]
&  \qquad\qquad\text{(setting $\vartheta_{2}:=\max\{\beta_{q,Q},1\}$%
)}\\[0.2cm]
&  \qquad\qquad\geq\frac{1}{\vartheta_{2}\,\vert B_{X}(x,\sqrt
{t})\vert }\,\exp\left(  -\frac{\vartheta_{2}\,d_{X}^{2}(x,y)}%
{t}\right)  ,
\end{split}
\end{equation}
and this is again the desired \eqref{eq.mainLowerestim} (this time with
$\vartheta=\vartheta_{2}\geq1$). Gathering \eqref{eq.StepFinalecase1lower} and
\eqref{eq.StepFinalecase2lower}, we conclude that estimate
\eqref{eq.mainLowerestim} holds for every $x,y\in\mathbb{R}^{n}$ and every
$t>0$ by choosing 
$$\vartheta:=\max\{\vartheta_{1},\vartheta_{2}\}>1.$$ 
This
ends the proof.
\end{proof}
Thanks to Propositions \ref{prop.upperestimateGamma} and
\ref{prop.lowerestimateGamma}, we can now \eqref{eq.GuassianGamma}
in Theorem \ref{thm.mainGaussianGamma}. \medskip

\begin{proof} [Proof of Theorem \ref{thm.mainGaussianGamma}-(i)] 
For every $x,y\in\mathbb{R}^{n}$ and every $t>0$, we set
$$
H(x,y,t):=t^{-Q/2}\,\int_{\mathbb{R}^{p}}\exp
\left(-\frac{\|(x,0)^{-1}\ast(y,\eta)\|^{2}}{t}\right)\d\eta.
$$
On account of \eqref{eq.estimGammaquasiGauss2}, we know that there exists a
constant $\mathbf{c}\geq1$, only depending on $\mathbb{G}$ and on
$\mathcal{Z}$ (which, in their turn, only depend on the set $X$), such that
\begin{equation}
\mathbf{c}^{{-1-Q/2}}\,H\left(  x,y,\mathbf{c}^{{-1}}t\right)  \leq
\gamma(t,x,y)\leq\mathbf{c}^{{-1-Q/2}}\,H\left(  x,y,\mathbf{c}t\right)
\label{eq.controlGammaH}%
\end{equation}
for every $x,y\in\mathbb{R}^{n}$ and every $t>0$. These bounds, together
with the preceding Propositions \ref{prop.upperestimateGamma} and
\ref{prop.lowerestimateGamma}, immediately give \eqref{eq.GuassianGamma}.
\end{proof}

\section{Estimates for the derivatives of $\Gamma$\label{sec.GaussianDer}}
The aim of this section is to establish (upper) Gaussian estimates for the
space derivatives along $X_{1},\ldots,X_{m}$ and for the `time derivatives' of
arbitrary order of $\gamma$, that is Theorem \ref{thm.mainGaussianGamma}-(ii).
To begin with, we state the following theorem proved in \cite{BBHeat}, which
provides integral re\-pre\-sen\-ta\-tions (analogous to formula
\eqref{defiGammaesplicitinaPRE1}) for any space/time derivative of $\gamma$.

\begin{theorem}[{See \cite[Theorem 3]{BBHeat}}]\label{Thm bbHeat}
Under the previous assumption,
and keeping the notation of Theorem \ref{ThmC}, for any nonnegative integers
$\alpha,h,k$ and any choice of indexes $i_{1},\ldots,i_{h},j_{1},\ldots,j_{k}$
in $\{1,\ldots,m\}$, we have the following representation formulas
\begin{align}
&  \left(\frac{\partial}{\partial t}\right)^{\alpha}
 X_{i_{1}}^{x}\cdots X_{i_{h}}^{x}\gamma(t,x,y)\label{derGammatx} \\[0.1cm]
 &  \quad=\int_{\mathbb{R}^{p}}\left(\left(
    \frac{\partial}{\partial t}\right)^{\alpha}
     {Z}_{i_{1}}\cdots{Z}_{i_{h}}\gamma_{\mathbb{G}}\right)
     \big(t,(y,0)^{-1}\ast(x,\eta)\big)\,\d\eta;	\nonumber \\[0.2cm]
&  \left(\frac{\partial}{\partial t}\right)^{\alpha}
X_{j_{1}}^{y}\cdots X_{j_{k}}^{y}\gamma(t,x,y)\label{derGammays} \\[0.1cm]
&  \quad = \int_{\mathbb{R}^{p}}
   \left(\left(  \frac{\partial}{\partial t}\right)^{\alpha}
   {Z}_{j_{1}}\cdots{Z}_{j_{k}}\gamma_{\mathbb{G}}\right)
   \big(t,(x,0)^{-1}\ast(y,\eta)\big)\,\d\eta; \nonumber\\[0.2cm]
&  \left(\frac{\partial}{\partial t}\right)^{\alpha}
X_{j_{1}}^{y}\cdots
X_{j_{k}}^{y}X_{i_{1}}^{x}\cdots X_{i_{h}}^{x}\gamma(t,x,y)\label{derGammatutte} \\[0.1cm]
   &  \qquad=\int_{\mathbb{R}^{p}}
   \left(\left(\frac{\partial}{\partial t}\right)^{\alpha}
   {Z}_{j_{1}}\cdots{Z}_{j_{k}}\big(({Z}_{i_{1}}\cdots{Z}_{i_{h}}\gamma_{\mathbb{G}})
   \circ\widetilde{\iota}\,\,\big)\right)\big(t,(x,0)^{-1}\ast(y,\eta)\big)\,\d\eta\,,\nonumber
\end{align}
holding true for every $(t,x)\neq(0,y)$ in $\mathbb{R}^{1+n}$. Here
$\widetilde{\iota}:\mathbb{R}^{1+N}\rightarrow\mathbb{R}^{1+N}$ is the map
defined by
$$
\widetilde{\iota}(t,u)=(t,u^{-1})
$$
and $u^{-1}$ is the inverse of $u$ in $\mathbb{G}=(\mathbb{R}^{N},\ast)$.
\end{theorem}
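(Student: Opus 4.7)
The strategy is to differentiate the defining integral representation \eqref{defiGammaesplicitinaPRE1} of $\gamma$ under the integral sign; the Gaussian bounds of Theorem~\ref{exTheoremB}-(b) for $\gamma_{\G}$ and all its $Z_j$- and $\partial_t$-derivatives both justify the differentiations and guarantee that every boundary term produced by an $\eta$-integration by parts decays at infinity. The time derivatives $(\de/\de t)^{\alpha}$ commute trivially with the $\eta$-integral, so all of the real work lies in the spatial derivatives.

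For \eqref{derGammatx} I would exploit the lifting identity $Z_j(x,\xi) = X_j(x) + R_j(x,\xi)$ of Theorem~\ref{ThmA}, in which $R_j$ acts only in the $\xi$-variables. Fixing $y$ and setting $\Phi(x,\eta) := \gamma_{\G}\bigl(t,(y,0)^{-1}\ast(x,\eta)\bigr)$, the left-invariance of $Z_j$ on $\G$ gives
$$
(Z_j\gamma_{\G})\bigl(t,(y,0)^{-1}\ast(x,\eta)\bigr) = X_j^{x}\Phi(x,\eta) + R_j\Phi(x,\eta),
$$
so integrating in $\eta$ and pulling $X_j^{x}$ outside reduces the case $h=1$ of \eqref{derGammatx} to the identity $\int_{\R^p}R_j\Phi(x,\eta)\,\d\eta = 0$. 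This cancellation rests on two structural facts: $Z_j$, being left-invariant on the unimodular Carnot group $\G$, is divergence-free, while the $\delta_\lambda$-homogeneity of $X_j$ forces each coefficient of $X_j$ to be $\delta_\lambda$-homogeneous of weight strictly smaller than that of its own variable, whence $\div X_j\equiv 0$. Combining these gives $\div_\eta R_j\equiv 0$, and an $\eta$-integration by parts (with the Gaussian decay taking care of the boundary terms) kills the remainder. Iterating $h$ times, and finally pulling $(\de/\de t)^{\alpha}$ inside the integral, produces \eqref{derGammatx}.

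Formula \eqref{derGammays} is then immediate from the symmetry $\gamma(t,x,y)=\gamma(t,y,x)$ of Theorem~\ref{ThmC}-(iv): rewriting $\gamma(t,x,y) = \int_{\R^p}\gamma_{\G}(t,(x,0)^{-1}\ast(y,\eta))\,\d\eta$ and repeating the previous argument with the roles of $x$ and $y$ exchanged yields \eqref{derGammays}.

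The mixed formula \eqref{derGammatutte} is the subtlest. After applying the $y$-derivatives via \eqref{derGammays}, the integrand $(Z_{j_1}\cdots Z_{j_k}\gamma_{\G})(t,(x,0)^{-1}\ast(y,\eta))$ has $x$ sitting on the \emph{left} of the group product, so left-invariance of the $Z_i$'s no longer applies directly to handle the subsequent $X^x$-derivatives. The key device is the inversion symmetry $\gamma_{\G}(t,u)=\gamma_{\G}(t,u^{-1})$ of Theorem~\ref{exTheoremB}-(ii): writing
$$
(Z_{j_1}\cdots Z_{j_k}\gamma_{\G})(t,u) = \bigl[(Z_{j_1}\cdots Z_{j_k}\gamma_{\G})\circ\widetilde\iota\bigr](t,u^{-1})
$$
and substituting $u = (x,0)^{-1}\ast(y,\eta)$ rewrites the integrand as the function $(Z_{j_1}\cdots Z_{j_k}\gamma_{\G})\circ\widetilde\iota$ evaluated at $(y,\eta)^{-1}\ast(x,0)$, repositioning $x$ on the right of the product so that left-invariance, the decomposition $Z_i=X_i+R_i$, and the divergence-based integration-by-parts argument from the first step can all be applied again. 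I expect the main technical obstacle to be the careful book-keeping of the composition with $\widetilde\iota$ under the iterated action of $Z_{j_1}\cdots Z_{j_k}$ and the verification that the $R$-remainders still cancel after this composition; the underlying mechanism nevertheless remains the divergence-freeness of the lifted vector fields $Z_j$ on $\G$.
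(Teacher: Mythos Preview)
The paper does not prove this theorem; it is quoted from \cite{BBHeat} and only used as input. So there is no in-paper argument to compare against, and I assess your sketch on its own merits.

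Your treatment of \eqref{derGammatx} and \eqref{derGammays} is correct and is almost certainly the mechanism of \cite{BBHeat}: the splitting $Z_j=X_j+R_j$, the fact that $\div Z_j=0$ (left invariance on a unimodular group) together with $\div X_j=0$ (homogeneity) force $\div_\eta R_j=0$, and the Gaussian decay \eqref{eq.Gaussestder} then lets the $\eta$-integration by parts kill the $R_j$ remainder. Iteration and the symmetry $\gamma(t,x,y)=\gamma(t,y,x)$ give the first two formulas.

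For \eqref{derGammatutte}, however, your outline has a genuine gap. After the inversion you correctly rewrite the integrand as
\[
\big[(Z_{j_1}\cdots Z_{j_k}\gamma_{\G})\circ\widetilde\iota\big]\big(t,(y,\eta)^{-1}\ast(x,0)\big),
\]
with $x$ now sitting on the right. But when you then apply $Z_i^{(x,\xi)}=X_i^x+R_i^{(x,\xi)}$ and evaluate at $\xi=0$, the remainder $R_i$ differentiates in $\xi$---the fiber variable attached to $x$, which is \emph{frozen at $0$}---and not in the integration variable $\eta$, which is the fiber variable attached to $y$. The identity $\div_\eta R_i=0$ is therefore irrelevant at this step, and no $\eta$-integration by parts is available to annihilate $(R_i\Theta)(x,0)$. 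The mechanism you invoke simply does not act on the right variable.

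What closes the gap is an additional structural fact about the lifting, namely that for integrable $F$ on $\G$ the fiber integral
\[
\int_{\R^{p}} F\big((y,\eta)^{-1}\ast(x,\xi)\big)\,\d\eta
\]
is independent of $\xi$ (equivalently, $\int F((y,\eta)^{-1}\ast(x,0))\,\d\eta=\int F((y,0)^{-1}\ast(x,\eta))\,\d\eta$). This is a change-of-variables statement about the fibration $\pi:\G\to\R^n$ and the bi-invariance of Lebesgue measure on $\G$; it does not follow from $\div Z_j=0$ alone. With such a fiber-invariance lemma in hand your scheme goes through: the $R_i$-remainder is then the $\xi$-derivative of a $\xi$-independent quantity, hence zero. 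Without it, the mixed case remains incomplete.
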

While the proof of our Gaussian estimates for the derivatives appearing in
\eqref{derGammays} and \eqref{derGammatx} is, by now, quite straightforward,
for the mixed case in \eqref{derGammatutte} it will require some extra work.
We start establishing the following proposition, which will be useful for the
case of mixed derivatives.

\begin{proposition} \label{Prop homogeneous BLU}
 With the above notation, for any nonnegative
integers $\alpha,h,k$ and any choice of indexes 
$i_{1},\ldots,i_{h},j_{1},\ldots,j_{k}\in \{1,\ldots,m\}$, there exists $c_1,c_2>0$ such that 
\begin{align*}
&  \left\vert 
   \left(\frac{\partial}{\partial t}\right)^{\alpha}
   {Z}_{j_{1}}\cdots{Z}_{j_{k}}\big( ({Z}_{i_{1}}\cdots{Z}_{i_{h}}\gamma_{\mathbb{G}})  
   \circ\widetilde{\iota}\,\,\big)(t,u)\right\vert 
   \leq c_1\,t^{-(Q+2\alpha+h+k)/2}\,\exp\left(-\frac{\|u\|^{2}}{c_2\,t}\right),
\end{align*}
for every $u\in\mathbb{G}$ and every $t>0$.
\end{proposition}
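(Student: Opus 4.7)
The plan is to reduce the desired bound to the Gaussian estimates on left-invariant derivatives of $\gamma_{\mathbb{G}}$ given by Theorem \ref{exTheoremB}(b). The genuine obstruction is that the left-invariant operators $Z_{j_1}\cdots Z_{j_k}$ in the statement act on a \emph{composition} $f\circ\widetilde{\iota}$, so they do not directly produce left-invariant derivatives of $f=Z_{i_1}\cdots Z_{i_h}\gamma_{\mathbb{G}}$. To bridge this, I would first establish the chain-rule identity
$$ Z_{j_1}\cdots Z_{j_k}(\phi\circ\iota)(u)=(-1)^{k}\big(Y_{j_1}\cdots Y_{j_k}\phi\big)(u^{-1}), $$
valid for every smooth $\phi$ on $\mathbb{G}$, where $\iota(u)=u^{-1}$ and $Y_j$ denotes the \emph{right}-invariant vector field that agrees with $Z_j$ at the identity. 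Since $\partial_t$ commutes with every $Z_j$ and every $Y_j$, and $\widetilde{\iota}$ does not touch the time variable, the factor $\partial_t^{\alpha}$ can be pulled freely through the composition.

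The second ingredient is a purely algebraic fact on homogeneous Carnot groups: any product of $k$ right-invariant vector fields of $D_{\lambda}$-degree $1$ can be expanded as a finite sum
$$ Y_{j_1}\cdots Y_{j_k}=\sum_{I}P_I(u)\,Z_{\ell_{1}^{I}}\cdots Z_{\ell_{s_{I}}^{I}}, $$
where each $\ell_{r}^{I}\in\{1,\ldots,m\}$, each $P_I$ is a $D_{\lambda}$-homogeneous polynomial, and the degree matching $\deg P_I+s_I=k$ (with $0\le s_I\le k$) is forced by the common $D_{\lambda}$-homogeneity of degree $k$ on both sides. This is standard bookkeeping in exponential coordinates on $\mathbb{G}$ (see \cite{BLUlibro}). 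Because $\|u^{-1}\|=\|u\|$ and any $D_{\lambda}$-homogeneous polynomial $P$ of degree $d\ge 0$ satisfies $|P(u)|\le C\|u\|^{d}$ globally, we obtain $|P_I(u^{-1})|\le C\|u\|^{k-s_I}$.

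Combining these two ingredients, the quantity to be estimated becomes a finite sum of terms of the form
$$ C\,\|u\|^{k-s_I}\,\Big|\partial_{t}^{\alpha}Z_{\ell_{1}^{I}}\cdots Z_{\ell_{s_{I}}^{I}}Z_{i_1}\cdots Z_{i_h}\gamma_{\mathbb{G}}(t,u^{-1})\Big|. $$
Applying Theorem \ref{exTheoremB}(b) to each such term and using $\|u^{-1}\|=\|u\|$, every summand is at most
$$ C\,\|u\|^{k-s_I}\,t^{-(Q+s_I+h+2\alpha)/2}\,\exp\!\left(-\frac{\|u\|^{2}}{\widehat{\mathbf{c}}\,t}\right). $$
Writing $\|u\|^{k-s_I}=t^{(k-s_I)/2}(\|u\|/\sqrt{t})^{k-s_I}$ consolidates the $t$-powers into the target exponent $t^{-(Q+h+k+2\alpha)/2}$, and a direct application of Lemma \ref{lem.stimeExp}(i) with $\tau=\|u\|/\sqrt{t}$, $\nu=k-s_I$, and any $\delta\in(0,1)$ absorbs the polynomial factor $(\|u\|/\sqrt{t})^{k-s_I}$ into the Gaussian at the cost of enlarging $\widehat{\mathbf{c}}$ to some $c_2$. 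Summing the finitely many contributions and setting $c_1$ appropriately yields the claimed bound.

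The main technical point is the polynomial expansion of iterated right-invariant derivatives in terms of left-invariant ones with $D_\lambda$-homogeneous polynomial coefficients; once this is granted, the assembly is routine combining Theorem \ref{exTheoremB}(b) with Lemma \ref{lem.stimeExp}. The cases $k=0$ or $h=0$ require no modification (the corresponding expansion reduces to a single term with $P_I\equiv 1$ and $s_I=k$).
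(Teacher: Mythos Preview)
Your approach is essentially the same as the paper's, just packaged differently. The paper proves an abstract lemma showing that for any $1$-homogeneous vector field $Y$ there is another $1$-homogeneous field $\widetilde{Y}$ with $Y(f\circ\iota)=(\widetilde{Y}f)\circ\iota$, and then establishes Gaussian bounds for derivatives along \emph{arbitrary} $1$-homogeneous (not necessarily left-invariant) fields by expanding them in the left-invariant Jacobian basis. Your identification $\widetilde{Z}_j=-Y_j$ (the right-invariant field agreeing with $Z_j$ at the origin) is a nice concrete instance of the paper's lemma, and your subsequent expansion of right-invariant products in left-invariant generators with homogeneous polynomial coefficients is exactly the mechanism behind the paper's Proposition on $1$-homogeneous fields.

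There is, however, a genuine bookkeeping error in your degree-matching claim. You assert $\deg P_I+s_I=k$ with $0\le s_I\le k$, but the homogeneity runs the other way: a term $P_I(u)\,Z_{\ell_1}\cdots Z_{\ell_{s_I}}$ with each $Z_\ell$ of degree $1$ has $D_\lambda$-degree $s_I-\deg P_I$, so matching the degree $k$ of $Y_{j_1}\cdots Y_{j_k}$ forces
\[
\deg P_I=s_I-k,\qquad s_I\ge k.
\]
(Already for a single right-invariant field on the Heisenberg group one sees $Y_1=Z_1+y\,[Z_1,Z_2]=Z_1+yZ_1Z_2-yZ_2Z_1$, where the last two terms have $s_I=2>1=k$ and $\deg P_I=1$.) With your stated relation the $t$-powers do \emph{not} consolidate to $t^{-(Q+h+k+2\alpha)/2}$: one gets $t^{-(Q+2s_I-k+h+2\alpha)/2}$ instead. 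With the corrected relation $\deg P_I=s_I-k$, the bound $|P_I(u^{-1})|\le C\|u\|^{s_I-k}$ combined with \eqref{eq.Gaussestder} gives
\[
C\,\Big(\tfrac{\|u\|}{\sqrt t}\Big)^{s_I-k}\,t^{-(Q+k+h+2\alpha)/2}\,\exp\!\Big(-\tfrac{\|u\|^2}{\widehat{\mathbf c}\,t}\Big),
\]
and then Lemma~\ref{lem.stimeExp}(i) absorbs the polynomial factor exactly as you intended. So the strategy is fine; only the sign of the homogeneity bookkeeping needs fixing.
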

In turn, Proposition \ref{Prop homogeneous BLU} follows from two facts which
are stated separately in the next two lemmas, since they may be of independent interest.
\begin{lemma}
\label{Lemma der homog}Let $Y$ be a $1$-homogeneous 
\emph{(}but not necessarily left
invariant\emph{)} smooth vector field on $\mathbb{G}$. Then, it is possible to find 
another
$1$-homogeneous smooth vector field $\widetilde{Y}$ such that 
$$
Y(f\circ\iota)  = (\widetilde{Y}f)\circ\iota\qquad\text{for every $f\in C^\infty(\RN)$},
$$
where $\iota(u) = u^{-1}$ is the inversion map on $\mathbb{G}$.
\end{lemma}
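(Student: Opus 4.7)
The plan is to take $\widetilde{Y}$ to be the pushforward of $Y$ under the inversion map $\iota$, i.e.\ $\widetilde{Y}(v) := (\d\iota)_{\iota(v)}\bigl(Y(\iota(v))\bigr)$. Since $\iota$ is a smooth involutive diffeomorphism of $\G=(\RN,\ast)$, this is a well-defined smooth vector field on $\G$, and the defining identity $Y(f \circ \iota)(u) = \widetilde{Y}(f)(\iota(u))$, i.e.\ $Y(f \circ \iota) = (\widetilde{Y}f) \circ \iota$, is an immediate consequence of the chain rule together with $\iota \circ \iota = \mathrm{id}$. This part is essentially bookkeeping.

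The substantive step is to verify that $\widetilde{Y}$ is $1$-homogeneous in the sense of \eqref{1-homoge}, that is, $(D_\lambda)_{*}\widetilde{Y} = \widetilde{Y}$ for every $\lambda>0$. Rewriting both sides as pushforwards of $Y$, this reduces to $(D_\lambda \circ \iota)_{*}Y = \iota_{*}Y$, or equivalently (since $\iota$ is a diffeomorphism) to
$$(\iota \circ D_\lambda \circ \iota)_{*}Y = Y.$$
The key observation is that $D_\lambda$ is a Lie-group automorphism of $(\RN,\ast)$, so $D_\lambda(u^{-1}) = (D_\lambda(u))^{-1}$; a one-line computation then gives
$$\iota \circ D_\lambda \circ \iota(u) = \bigl(D_\lambda(u^{-1})\bigr)^{-1} = D_\lambda(u),$$
so that $\iota \circ D_\lambda \circ \iota = D_\lambda$ as a map of $\G$ onto itself. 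Combining this with the assumed $1$-homogeneity of $Y$ itself, namely $(D_\lambda)_{*}Y = Y$, we conclude $(\iota \circ D_\lambda \circ \iota)_{*}Y = (D_\lambda)_{*}Y = Y$, which is what we wanted.

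There is no serious obstacle here; the only point to watch is composing pushforwards in the correct order, and the fact that makes everything fit together is the compatibility $\iota \circ D_\lambda \circ \iota = D_\lambda$ between dilations and inversion — a purely group-theoretic consequence of the $D_\lambda$'s being group automorphisms of $(\RN,\ast)$.
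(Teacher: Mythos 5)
Your argument is correct, and it is essentially the paper's proof in coordinate-free dress: both constructions take $\widetilde{Y}$ to be the pushforward $\iota_{*}Y$ (the paper writes this out in coordinates as $\widetilde{c}_k = c_k\circ\iota$ after a chain-rule computation), and both verify homogeneity from the single fact that $D_\lambda$ is a group automorphism, hence commutes with inversion. Your version sidesteps the intermediate bookkeeping the paper goes through — checking that the components of $\iota$ are homogeneous polynomials and that each intermediate coefficient $c_k$ is $(\alpha_k-1)$-homogeneous — and replaces it with the purely formal manipulation of pushforwards; what you lose is the extra information, implicit in the paper's computation, that $\widetilde{Y}$ again has polynomial coefficients, but the lemma's statement does not require that.

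One small point to correct in the writeup. With the normalization used in the paper's own computation ($b_j$ being $(\alpha_j-1)$-homogeneous), $1$-homogeneity of $Y$ in pushforward language reads $(D_\lambda)_{*}Y = \lambda\, Y$, not $(D_\lambda)_{*}Y = Y$; the displayed equation \eqref{1-homoge} is missing the factor $\lambda$. This does not affect your argument: the scalar $\lambda$ passes through every pushforward in your chain of reductions, so you arrive at $(\iota\circ D_\lambda\circ\iota)_{*}Y=\lambda Y$, and the identity $\iota\circ D_\lambda\circ\iota = D_\lambda$ reduces this to the hypothesis $(D_\lambda)_{*}Y=\lambda Y$ exactly as you intend. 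It is just worth stating the normalization explicitly so the reader isn't confused by the missing $\lambda$.
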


\begin{proof}
First of all, let us write the dilations on $\mathbb{G}$ as:
$$
D_{\lambda}(u_{1},...,u_{N})  = 
(\lambda^{\alpha_{1}}u_{1},...,\lambda^{\alpha_{N}}u_{N})\qquad  
(\text{for any $\lambda>0$ and $u\in\mathbb{G}$}).
$$
We can write
$$
Y=\sum_{j=1}^{N}b_{j}(u)\,\frac{\de}{\partial u_{j}}
$$
where $b_{j}(u)$ is a $(\alpha_{j}-1)$-homogeneous
polynomial function. Moreover, 
using the structure of the inversion map on homogeneous groups (see, e.g.,
\cite[Corollary 1.3.16]{BLUlibro}), we know that the $k$-th com\-po\-nent of
$\iota(u) $ is a $\alpha_{k}$-homogeneous polynomial function.
Therefore
\begin{align*}
Y(f\circ\iota)(u)   &  =\sum_{j=1}^{N}b_{j}(u)\,
  \sum_{k=1}^{N}\frac{\partial f}{\partial u_{k}}\big(\iota(u)\big)  
  \frac{\partial \iota_{k}}{\partial u_{j}}(u) \\
 & =\sum_{k=1}^{N}\left(\sum_{j = 1}^N
   b_{j}(u)\frac{\partial \iota_{k}}{\partial u_{j}}(u)\right)
   \frac{\partial f}{\partial u_{k}}\big(\iota(u)\big) 
    \equiv\sum_{k=1}^{N}c_{k}(u) \frac{\partial f}{\partial u_{k}}\big(\iota(u)\big),
\end{align*}
and $c_{k}$ is a homogeneous polynomial function
 of degree 
 $$(\alpha_{j}-1)+(\alpha_{k}-\alpha_{j})  =\alpha_{k}-1.$$ 
 Next, we define $\widetilde{c}_{k}=c_{k}\circ\iota.$ 
 Since the dilations $D_\lambda$ are group automorphisms, we have \medskip
 
 (i)\,\,$D_{\lambda}\big(\iota(u)\big) = \iota\big(D_\lambda(u)\big)$; \medskip
 
 (ii)\,\,$\widetilde{c}_{k}\big(D_{\lambda}(u)\big) =
   c_{k}\big(D_{\lambda}\big(\iota(u)\big)\big)
  =\lambda^{\alpha_{k}-1}c_{k}\big(\iota(u)\big)
=\lambda^{\alpha_{k}-1}\widetilde{c}_{k}(u).$ \medskip

\noindent Hence, $\widetilde{c}_{k}$ is $(\alpha_{k}-1)$-homogeneous as well,
and
$$
Y(f\circ\iota)(u) =
\sum_{k=1}^{N}\widetilde{c}_{k}\big(\iota(u)\big)\frac{\partial f}{\partial u_{k}}\big(\iota(u)\big)
 \equiv (\widetilde{Y}f)\big(\iota(u)\big),
$$
where $\widetilde{Y}:=\sum_{k=1}^{N}\widetilde{c}_{k}(u)\,\partial_{u_{k}}$ 
is a $1$-homogeneous vector field (in view of (ii)).
\end{proof}
Next, let us prove the following:
\begin{proposition} \label{BLU estimates homogeneous}
Let $\alpha,r$ be nonnegative integers, and let
$Y_{1},...,Y_{r}$ be $1$-homogeneous \emph{(}but not necessarily left invariant\emph{)} 
smooth vector fields on $\G$.
Then, there exist constants $c_1,\,c_2 > 0$ such that, 
for every $u\in\mathbb{G}$ and every $t>0$, the following
Gaussian bound holds 
$$
\left\vert 
\left(\frac{\partial}{\partial t}\right)^{\alpha}
{Y}_{1}\cdots{Y}_{r}\gamma_{\mathbb{G}}(t,u)\right\vert \leq
c_1\,t^{-(Q/2+\alpha+r/2)}\,\exp\left(-\frac{\|u\|^{2}}{c_2\,t}\right).
$$
\end{proposition}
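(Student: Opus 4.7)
The plan is to exploit the parabolic scaling of $\gamma_{\mathbb{G}}$ in order to reduce everything to an estimate at the single time $t=1$, and then to absorb the non\--left invariance of $Y_{1},\dots,Y_{r}$ into polynomial coefficients that get killed by the Gaussian decay.

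First, I would establish a parabolic homogeneity identity for
$F(t,u):=\big(\partial/\partial t\big)^{\alpha}\,Y_{1}\cdots Y_{r}\gamma_{\mathbb{G}}(t,u).$
From property (iii) of Theorem \ref{exTheoremB} we have $\gamma_{\mathbb{G}}(\lambda^{2}t,D_{\lambda}u)=\lambda^{-Q}\gamma_{\mathbb{G}}(t,u)$; differentiating $\alpha$ times in $t$ gives $(\partial_{t}^{\alpha}\gamma_{\mathbb{G}})(\lambda^{2}t,D_{\lambda}u)=\lambda^{-Q-2\alpha}(\partial_{t}^{\alpha}\gamma_{\mathbb{G}})(t,u)$. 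Since each $Y_{i}$ is $1$-homogeneous, iterating the identity $Y_{i}(g\circ D_{\lambda})=\lambda\,(Y_{i}g)\circ D_{\lambda}$ yields
$$
F(\lambda^{2}t,D_{\lambda}u)=\lambda^{-(Q+2\alpha+r)}\,F(t,u).
$$
Choosing $\lambda=1/\sqrt{t}$ and using $\|D_{1/\sqrt{t}}u\|=\|u\|/\sqrt{t}$ (Proposition \ref{Prop properties distances}(v)), we obtain $F(t,u)=t^{-(Q+2\alpha+r)/2}\,F(1,D_{1/\sqrt{t}}u)$. It therefore suffices to prove that there exist $c_{1},c_{2}>0$ with
$$
|F(1,v)|\leq c_{1}\exp\bigl(-\|v\|^{2}/c_{2}\bigr)\qquad\text{for every $v\in\mathbb{G}$.}
$$

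Next, I would rewrite the composition $Y_{1}\cdots Y_{r}$ in the frame of a basis of left-invariant vector fields on $\mathbb{G}$. Fix a basis $W_{1},\dots,W_{N}$ of $\operatorname{Lie}(\mathbb{G})$, with $W_{j}$ being $D_{\lambda}$-homogeneous of a certain weight $\delta_{j}$. Since at every point the $W_{j}$'s form a basis of the tangent space and have polynomial coefficients in the canonical frame, every $1$-homogeneous vector field $Y$ can be written as $Y=\sum_{j}\phi_{j}(u)\,W_{j}$ with $\phi_{j}$ a weighted polynomial of degree $\delta_{j}-1$ (this mirrors the coefficient analysis carried out in Lemma \ref{Lemma der homog}). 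Commuting the polynomial coefficients past the left-invariant $W_{j}$'s (which only introduces further polynomial factors, because $W_{j}$ applied to a polynomial is again a polynomial), we expand
$$
Y_{1}\cdots Y_{r}=\sum_{\beta}p_{\beta}(u)\,\mathcal{W}_{\beta},
$$
where each $\mathcal{W}_{\beta}$ is a product of at most $r$ left-invariant vector fields and each $p_{\beta}$ is a polynomial (of weighted degree bounded by a quantity depending only on $r$ and the step of $\mathbb{G}$).

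The point of this rewriting is that each $\mathcal{W}_{\beta}$ is left-invariant and can therefore be written as a linear combination of products $Z_{i_{1}}\cdots Z_{i_{h}}$ with $i_{k}\in\{1,\dots,m\}$ (since $Z_{1},\dots,Z_{m}$ Lie-generate $\operatorname{Lie}(\mathbb{G})$, any left-invariant vector field is a linear combination of iterated commutators of the $Z_{j}$'s). Estimate \eqref{eq.Gaussestder} with $t=1$ then gives
$$
\bigl|\mathcal{W}_{\beta}\,(\partial_{t}^{\alpha}\gamma_{\mathbb{G}})(1,v)\bigr|\leq C_{\beta}\,\exp\bigl(-\|v\|^{2}/C_{\beta}\bigr).
$$
Combining these bounds with the expansion yields $|F(1,v)|\leq C\,\bigl(\sum_{\beta}|p_{\beta}(v)|\bigr)\exp\bigl(-\|v\|^{2}/C\bigr)$; since each $p_{\beta}$ is a polynomial, $|p_{\beta}(v)|$ is dominated by $(1+\|v\|)^{M}$ for some integer $M$, and invoking Lemma \ref{lem.stimeExp}(i) (which lets us absorb any power of $\|v\|$ into a slightly weakened Gaussian) we obtain the required Gaussian estimate at $t=1$. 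Substituting back into the homogeneity identity for $F$ completes the proof.

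The only truly delicate point is the rearrangement of $Y_{1}\cdots Y_{r}$ into $\sum_{\beta}p_{\beta}\,\mathcal{W}_{\beta}$ while keeping control on the polynomial degree of the $p_{\beta}$'s; this is an algebraic bookkeeping issue that relies on the polynomial nature of the group law of $\mathbb{G}$ and of the $1$-homogeneous vector fields involved. Everything else is either a direct application of Theorem \ref{exTheoremB}(b) or the Gaussian-absorbs-polynomial trick of Lemma \ref{lem.stimeExp}.
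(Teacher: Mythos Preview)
Your proposal is correct and rests on the same core idea as the paper: rewrite each $1$-homogeneous $Y_i$ in the left-invariant frame so that $Y_1\cdots Y_r$ becomes a finite sum of polynomial coefficients times products of left-invariant vector fields, apply the known Gaussian bound \eqref{eq.Gaussestder} to the left-invariant part, and absorb the polynomial factors into the Gaussian via Lemma~\ref{lem.stimeExp}(i).

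The organizational difference is that you first reduce to $t=1$ by parabolic scaling (using Theorem~\ref{exTheoremB}(iii) and the $1$-homogeneity of the $Y_i$'s), whereas the paper works at arbitrary $t$ and tracks the precise homogeneity degree of each polynomial coefficient $\gamma_i$ (namely $\alpha_i-1$) so that the correct power $t^{-(Q/2+\alpha+r/2)}$ falls out directly from the estimate $|\gamma_i(u)|\leq\kappa\|u\|^{\alpha_i-1}$ combined with \eqref{eq.Gaussestder}. Your reduction to $t=1$ buys you a cleaner iteration step: once $t$ is frozen, you only need that the $p_\beta$'s are \emph{some} polynomials, without caring about their exact weighted degree, since any polynomial is dominated by a Gaussian. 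The paper's direct approach, on the other hand, makes transparent exactly how the $t^{-r/2}$ factor arises from the homogeneity bookkeeping, and avoids invoking the scaling identity for $\gamma_{\mathbb{G}}$ an extra time. Both routes are equally valid and of comparable length.
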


\begin{proof}
If $Y_{1},...,Y_{r}$ are $1$-homogeneous \emph{and left invariant} 
vector fields on $\G$, this result is proved by \cite[Theorem. 2.5]{BLUpaper}
(see also \eqref{eq.Gaussestder} in Theorem \ref{exTheoremB}). 
We are going to show that the result for left invariant $1$-homogeneous vector fields easily
implies our more general statement. \medskip

In fact, let $X_{1},...,X_{N}$ be 
the canonical basis of $\mathbb{G}$, i.e., $X_{i}$ is the
unique left invariant vector field on $\mathbb{G}$ such that
$X_i(0) = \partial_{u_{i}}$.
Up to possibly reordering the $X_i$'s,
we can assume that $X_{i}$ is $\alpha_{i}$-homogeneous, with 
$1=\alpha_{1}=...=\alpha_{m}<\alpha_{m+1}\leq\alpha_{m+2}....\leq\alpha_{N}=s$,
and
$s$ is the step of $\mathbb{G}$ (that is, $\Lie(\G)$ is nilpotent of step $s$). 
Then, for homogeneity
reasons, we have
$$
X_{i}=\partial_{u_{i}}+\sum_{\begin{subarray}{c}
k = 1 \\[0.05cm]
\alpha_{k}>\alpha_{i}
\end{subarray}}^N b_{ik}(u)\partial_{u_{k}}\qquad (\text{for $i=1,2,...,N$}),
$$
where $b_{ik}(u)$ is a $(\alpha_{k}-\alpha_{i})$-homogeneous polynomial
function. In particular, since $X_{N}=\partial_{u_{N}}$, we
can solve the above system in $\partial_{u_{1}},...,\partial_{u_{N}}$ using backward substitution,
thus writing%
\begin{equation} \label{eq.deucombXj}
\partial_{u_{i}}=X_{i}+\sum_{\begin{subarray}{c}
k = 1 \\[0.05cm]
\alpha_{k}>\alpha_{i}
\end{subarray}}^N c_{ik}(u)X_{k}\qquad (\text{for $i=1,2,...,N$}),
\end{equation}
where $c_{ik}(u)$ is a $(\alpha_{k}-\alpha_{i})$-homogeneous polynomial
function. 

Let now $Y$ be a $1$-homogeneous vector field. Owing to \eqref{eq.deucombXj},
we have
$$
Y=\sum_{i=1}^{N}\beta_{i}(u)\,\partial_{u_{i}}=
\sum_{i=1}^{N}\beta_{i}(u)\left(
X_{i}+\sum_{\begin{subarray}{c}
k = 1 \\[0.05cm]
\alpha_{k}>\alpha_{i}
\end{subarray}}^N c_{ik}(u)X_{k}\right) 
 \equiv\sum_{i=1}^{N}\gamma_{i}(u) X_{i},
$$
where $\gamma_{i}(u)$ is a $(\alpha_{i}-1)$-homogeneous
polynomial function. Notice that, since $X_{1},...,X_{m}$ are generators
of $\Lie(\G)$, every
$X_{i}$ with $i>m$ can be written as a linear combination (with constant
coefficients) of commutators of $X_{1},...,X_{m}$, of length $\alpha_{i}$.
Thus, since the Gaussian bound holds for left invariant vector fields
(see \eqref{eq.Gaussestder} in Theorem \ref{exTheoremB}), we obtain
\begin{align*}
\vert {Y}\gamma_{\mathbb{G}}(t,u)\vert  &  \leq
\sum_{i=1}^{N}\vert \gamma_{i}(u)\vert\cdot 
\vert X_{i}\gamma_{\mathbb{G}}(t,u)\vert
\leq \widehat{\mathbf{c}}\,\sum_{i=1}^{N}\vert \gamma_{i}(u)\vert\cdot
t^{-(Q+\alpha_{i})/2}\,\exp\left(-\frac{\|u\|^{2}}{\widehat{\mathbf{c}}\,t}\right) \\
&
 \leq \kappa\,\sum_{i=1}^{N}\|u\|^{\alpha_i-1}\cdot
t^{-(Q+\alpha_{i})/2}\,\exp\left(-\frac{\|u\|^{2}}{\widehat{\mathbf{c}}\,t}\right)
\\
& =
\kappa\,t^{-(Q+1)/2}\,\sum_{i=1}^{N}\left(\frac{\|u\|}{\sqrt{t}}\right)^{\alpha_i-1}
\!\!\!\!\cdot
\exp\left(-\frac{\|u\|^{2}}{\widehat{\mathbf{c}}\,t}\right)  \\
& \leq
c_1\,t^{-(Q+1)/2}\,\exp\left(-\frac{\|u\|^{2}}{c_2\,t}\right)
\end{align*}
where the last inequality follows from \eqref{eq.stimaExpAlto}. The general case
then follows by iteration.
\end{proof}
We are now ready to prove Proposition \ref{Prop homogeneous BLU}.
\begin{proof} [Proof of Proposition \ref{Prop homogeneous BLU}.]
By repeatedly applying Lemma \ref{Lemma der homog}, we can rewrite
\begin{align*}
\left(\frac{\partial}{\partial t}\right)^{\alpha}
   {Z}_{j_{1}}\cdots{Z}_{j_{k}}\big( ({Z}_{i_{1}}\cdots{Z}_{i_{h}}\gamma_{\mathbb{G}})  
   \circ\widetilde{\iota}\,\,\big) =
    \left\{ \left(\frac{\partial}{\partial t}\right)^{\alpha}
    \widetilde{{Z}}_{j_{1}}\cdots\widetilde{{Z}}_{j_{k}}
    \big(Z_{i_{1}}\cdots Z_{i_{h}}\gamma_{\mathbb{G}}\big)\right\}\circ
    \widetilde{\iota}
\end{align*}
with $\widetilde{\iota}(t,u)=(t,u^{-1})$. Here the $Z_{i}$'s are
$1$-homogeneous and left invariant, whereas the $\widetilde{Z}_{i}$'s are just
$1$-homogeneous.
Anyhow, we can apply Proposition
\ref{BLU estimates homogeneous} and get the desired result.
\end{proof}
With Proposition \ref{Prop homogeneous BLU} in hand, we can prove
the Gaussian estimates on the derivatives.
\begin{proof} [Proof of Theorem \ref{thm.mainGaussianGamma}-(ii)]
We
distinguish three different cases. \medskip

\textsc{Case 1.} $Y_{1},\ldots,Y_{r}=X_{i_{1}}^{x}\cdots X_{i_{r}}^{x}$. 
Then,
by \eqref{derGammatx}, \eqref{eq.Gaussestder} and Proposition 
\ref{prop.upperestimateGamma} we have
\begin{align*}
&  \left\vert 
  \left(\frac{\partial}{\partial t}\right)^{\alpha}
  X_{i_{1}}^{x}\cdots X_{i_{r}}^{x}\gamma(t,x,y)\right\vert \\[0.1cm]
&  \qquad 
   \leq \widehat{\mathbf{c}}\,
   t^{-(Q/2+\alpha+r/2)}\,\int_{\mathbb{R}^{p}}\exp\left(
-\frac{\|(y,0)^{-1}\ast(x,\eta)\|}{\widehat{\mathbf{c}}\,t}\right)\d\eta\\[0.1cm]
&  \qquad \leq c\,t^{-(\alpha+r/2)}\,\frac{1}
{\vert B_{X}(y,\sqrt{t})\vert}\,\exp\left(-\frac{d_{X}^{2}(x,y)}{C\,t}\right).
\end{align*}
The assertion then follows by Remark \ref{Remark equivalent gaussian}. \medskip

\textsc{Case 2.} $Y_{1},\ldots,Y_{r}=X_{j_{1}}^{y}\cdots X_{j_{r}}^{y}$. 
Then, by \eqref{derGammays}, \eqref{eq.Gaussestder} and 
Proposition \ref{prop.upperestimateGamma}, we have
\begin{align*}
&  \left\vert 
  \left(\frac{\partial}{\partial t}\right)^{\alpha}
  X_{j_{1}}^{y}\cdots X_{j_{r}}^{y}\gamma(t,x,y)\right\vert \\[0.1cm]
&  \qquad 
   \leq \widehat{\mathbf{c}}\,
   t^{-(Q/2+\alpha+r/2)}\,\int_{\mathbb{R}^{p}}\exp\left(
-\frac{\|(x,0)^{-1}\ast(y,\eta)\|}{\widehat{\mathbf{c}}\,t}\right)\d\eta\\[0.1cm]
&  \qquad \leq c\,t^{-(\alpha+r/2)}\,\frac{1}
{\vert B_{X}(x,\sqrt{t})\vert}\,\exp\left(-\frac{d_{X}^{2}(x,y)}%
{C\,t}\right).
\end{align*}

\textsc{Case 3.} $Y_{1},\ldots,Y_{r}=X_{j_{1}}^{y}\cdots X_{j_{k}}^{y}%
X_{i_{1}}^{x}\cdots X_{i_{h}}^{x}$ (with $k+h=r$). 
In this last case, by exploiting \eqref{derGammatutte}, 
Proposition \ref{Prop homogeneous BLU} and again Proposition \ref{prop.upperestimateGamma},
 we obtain
\begin{align*}
&  \left\vert 
    \left(\frac{\partial}{\partial t}\right)^{\alpha}
    X_{j_{1}}^{y}\cdots X_{j_{k}}^{y}X_{i_{1}}^{x}\cdots X_{i_{h}}^{x}
    \gamma(t,x,y)\right\vert \\[0.1cm]
&  \qquad \leq c_1\,t^{-(Q/2+\alpha+r/2)}\,
    \int_{\mathbb{R}^{p}}\exp\left(-\frac{\|(x,0)^{-1}\ast(y,\eta)\|^{2}}{c_2\,t}\right)\d\eta\\[0.1cm]
&  \qquad \leq c\,t^{-(\alpha+r/2)}\,\frac{1}{\vert B_{X}(x,\sqrt{t})\vert}\,
\exp\left(-\frac{d_{X}^{2}(x,y)}{C\,t}\right).
\end{align*}
This ends the proof.
\end{proof}

\section{An application to the Cauchy problem for $\mathcal{H}$} \label{sec.Cauchy}
As anticipated in the Introduction, in this section we exploit the global
Gaussian bounds of $\Gamma$ to study the unique solvability of the Cauchy
problem for $\mathcal{H}$. More precisely, we extend the result proved in
\cite[Thm. 4.1]{BBHeat}, where the Cauchy problem is studied for bounded
continuous initial data, to possibly unbounded continuous initial data, fastly
growing at infinity.

As for the proof of Gaussian estimates, we will make Assumptions
\ref{Assumptions} and\ will also assume (H3) (see Section \ref{sec.preliminari}). 
As noted before, condition (H3) amounts to assuming
that we are \emph{not} in a Carnot group (see also Remark
\ref{Remark Carnot o no} after the proof of our result for some explanation on
this point). \medskip

We start with the following
\begin{definition}
Let $S_{\tau}:=(0,\tau)\times\mathbb{R}^{n}$, for some fixed $\tau
\in(0,+\infty]$. Given any function $f\in C(\mathbb{R}^{n})$,
we say that $u:S_{\tau}\rightarrow\mathbb{R}$ is a \emph{classical solution}
of the Cauchy problem
\begin{equation} \label{eq.mainCauchyPbHt}%
\begin{cases}
\mathcal{H}u=0 & \text{in $S_{\tau}$},\\
u(0,x)=f(x) & \text{for $x\in\mathbb{R}^{n}$}%
\end{cases}
\end{equation}
if it satisfies the following properties:
\begin{enumerate}
\item $u\in C^{2}(S_{\tau})$ and $\mathcal{H}u=0$ pointwise on $S_{\tau}$;
\item $\lim_{t\rightarrow0^{+}}u(t,x)=f(x)$ for every fixed $x\in\mathbb{R}^{n}$.
\end{enumerate}
\end{definition}
Using the upper Gaussian estimates of $\Gamma$, we are able to prove that
\eqref{eq.mainCauchyPbHt}) admits (at least) one classical solution when the
initial datum $f$ grows at most exponentially. In what follows, we set
$$
\rho_{X}(x):=d_{X}(0,x)\qquad(x\in\mathbb{R}^{n}).
$$
\begin{theorem} \label{thm.mainExistenceCauchy}
There exists $T>0$ such that, if $f\in C(\mathbb{R}^{n})$ satisfies
the growth condition
\begin{equation} \label{eq.growthf}%
\int_{\mathbb{R}^{n}}\vert f(y)\vert\,\exp\big(-\mu\rho_{X}^{2}(y)\big)\,\d y<+\infty
\end{equation}
for some constant $\mu>0$, then the function
\begin{equation} \label{eq.defiusolution}%
u(t,x):=\int_{\mathbb{R}^{n}}\Gamma(t,x;0,y)\,f(y)\,\d y=
\int_{\mathbb{R}^{n}}\gamma(t,x,y)\,f(y)\,\d y
\end{equation}
is a classical solution of \eqref{eq.mainCauchyPbHt} on the strip $S_{T/\mu}$.

Furthermore, it is possible to find constants $\tau,\delta>0$ 
\emph{(}depending on $\mu$\emph{)} such that
\begin{equation} \label{eq.growthuunique}%
\int_{S_{\tau}}\exp\left(-\delta\rho_{X}^{2}(x)\right)\,|u(t,x)|\,\d t\,\d x<+\infty. 
\end{equation}
Finally, if $u_{1},u_{2}$ are two classical solutions of
\eqref{eq.mainCauchyPbHt} with the same continuous initial datum $f$, and
if $u_{1},u_{2}$
satisfy condition \eqref{eq.growthuunique} in two strips 
$S_{\tau_{1}},S_{\tau_{2}}$, respectively, then 
$$\text{$u_{1}\equiv u_{2}$ in $S_{\tau}$, \qquad for $\tau=\min\{\tau_{1},\tau_{2}\}$}.$$
\end{theorem}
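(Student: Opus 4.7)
The plan is to treat the three claims in sequence, relying throughout on the global Gaussian bounds of Theorem \ref{thm.mainGaussianGamma}.

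\emph{Existence and the PDE.} Set $T := 1/(2\varrho)$, with $\varrho$ as in \eqref{eq.GuassianGamma}. Combining the upper Gaussian bound with the triangle-type inequality $d_X^2(x,y) \geq \tfrac{1}{2}\rho_X^2(y) - \rho_X^2(x)$ (itself from $\rho_X(y)\leq \rho_X(x)+d_X(x,y)$ and $(a+b)^2\leq 2a^2+2b^2$) gives
$$
\gamma(t,x,y)|f(y)| \leq \frac{\varrho\,e^{\rho_X^2(x)/(\varrho t)}}{|B_X(x,\sqrt{t})|}\,e^{-\rho_X^2(y)/(2\varrho t)}|f(y)|.
$$
For $t < T/\mu$ one has $1/(2\varrho t) > \mu$, so the right-hand side is dominated locally uniformly in $(t,x)\in S_{T/\mu}$ by a constant times $|f(y)|e^{-\mu\rho_X^2(y)}$, hence absolute convergence of \eqref{eq.defiusolution}. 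The same reasoning, applied to the derivative bounds in Theorem \ref{thm.mainGaussianGamma}(ii), justifies differentiation under the integral, whence $u\in C^\infty(S_{T/\mu})$ and $\mathcal Hu=0$. For the initial condition, I would write $u(t,x)-f(x)=\int\gamma(t,x,y)(f(y)-f(x))\,dy$ via $\int\gamma(t,x,\cdot)=1$ (Theorem \ref{ThmC}(vi)), split over $B_X(x,r)$ and its complement, and use continuity of $f$ near $x$ together with Gaussian decay plus the growth bound on $f$ far away.

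\emph{Growth estimate \eqref{eq.growthuunique}.} Using the upper Gaussian bound and Fubini, the left-hand side is dominated by
$$
\varrho\int_{\R^n}|f(y)|\,J(y)\,dy,\qquad J(y):=\int_0^\tau\!\!\int_{\R^n}\frac{e^{-\delta\rho_X^2(x)-d_X^2(x,y)/(\varrho t)}}{|B_X(x,\sqrt{t})|}\,dx\,dt.
$$
The decisive pointwise inequality is
$$
\delta\rho_X^2(x)+\frac{d_X^2(x,y)}{\varrho t}\geq \frac{\delta}{1+\delta\varrho t}\,\rho_X^2(y),
$$
which follows by applying $\lambda a^2+(1-\lambda)b^2\geq \lambda(1-\lambda)(a+b)^2$ to the triangle inequality $\rho_X(y)\leq d_X(x,y)+\rho_X(x)$ and optimizing in $\lambda$. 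Using this on half of the left-hand side, retaining the other half as a residual $x$-Gaussian, then appealing to Proposition \ref{Remark equivalent gaussian} to re-center and to the doubling property \eqref{eq.doublingglobalegeneral} to bound the remaining $x$-integral uniformly, yields $J(y)\lesssim \tau\,\exp\!\bigl(-\tfrac{\delta/2}{1+\delta\varrho\tau}\rho_X^2(y)\bigr)$. Choosing for instance $\tau:=1/(4\mu\varrho)$ and $\delta:=4\mu$ makes the exponent equal to $\mu$, so \eqref{eq.growthf} closes the estimate.

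\emph{Uniqueness.} Set $w:=u_1-u_2$; then $\mathcal Hw=0$ in $S_{\min(\tau_1,\tau_2)}$ with $w(0,\cdot)\equiv 0$, and $w$ satisfies \eqref{eq.growthuunique}. The aim is the reproduction formula
$$
w(t,x)=\int_{\R^n}\gamma(t-s,x,y)\,w(s,y)\,dy\qquad(0<s<t<\tau),
$$
which forces $w(t,x)=0$ by sending $s\to 0^+$: $w(s,\cdot)\to 0$ pointwise, and for $s<t/2$ the integrand is dominated by $C|B_X(x,\sqrt{t/2})|^{-1}e^{-d_X^2(x,y)/(Ct)}|w(s,y)|$, which is integrable in $y$ uniformly in $s$ thanks to \eqref{eq.growthuunique} as soon as $\delta<1/(\varrho(t-s))$. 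To establish the representation I would introduce a smooth spatial cutoff $\chi_R$ with $\chi_R\equiv 1$ on $B_X(0,R)$, $\chi_R\equiv 0$ off $B_X(0,2R)$, and $|X_j\chi_R|,|X_jX_k\chi_R|\lesssim R^{-1},R^{-2}$; integrate by parts on $(s,t)\times\R^n$ the identity $0=\gamma(t-r,x,y)\chi_R(y)\,\mathcal Hw(r,y)$, using that $\gamma(t-\cdot,x,\cdot)$ solves the adjoint equation (Theorem \ref{ThmC}(viii), recalling $\mathcal L^*=\mathcal L$); then let $R\to\infty$. The commutator/boundary terms are supported on the annulus $R\leq\rho_X(y)\leq 2R$ and vanish in the limit via \eqref{eq.growthuunique}, the derivative Gaussian bounds of Theorem \ref{thm.mainGaussianGamma}(ii), and the gradient estimates on $\chi_R$, provided the compatibility $\delta\varrho(t-s)<1$ holds. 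Since this constraint forces short time steps, one finally iterates in $t$ to propagate the conclusion across the full strip $S_{\min(\tau_1,\tau_2)}$.

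The main obstacle is precisely this last step: the purely \emph{integrated} nature of \eqref{eq.growthuunique} (as opposed to a pointwise exponential bound) makes both the control of the annular commutator integrals generated by the cutoff and the dominated-convergence passage $s\to 0^+$ in the representation formula delicate. The compatibility $\delta\varrho(t-s)<1$ restricts the direct argument to short time intervals, so the conclusion on the whole strip has to be obtained by an iteration in $t$.
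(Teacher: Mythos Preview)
Your existence argument (Step I) and growth estimate (Step II) are correct, though you use somewhat different methods from the paper. For existence, the paper splits $f = f\phi_R + f(1-\phi_R)$ with a compactly supported cutoff, quotes the known result for bounded data on the first piece, and handles the tail directly; your global triangle-inequality bound $d_X^2(x,y)\ge\tfrac12\rho_X^2(y)-\rho_X^2(x)$ is equally effective. For the growth estimate, the paper splits the $y$-integral into $\{\rho_X(y)\ge 2\rho_X(x)\}$ (controlled via the crude lower bound $|B_X(x,\sqrt{t})|\ge \kappa_q t^{q/2}$) and $\{\rho_X(y)<2\rho_X(x)\}$ (controlled via $\int\gamma(t,x,y)\,dx=1$), whereas you use the optimized inequality $\delta\rho_X^2(x)+d_X^2(x,y)/(\varrho t)\ge \frac{\delta}{1+\delta\varrho t}\rho_X^2(y)$; both routes work. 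One minor point: the constant $C$ in the derivative Gaussian bounds \eqref{Gassian_derivatives} need not equal $\varrho$, so your choice $T=1/(2\varrho)$ should be shrunk accordingly to justify differentiation under the integral.

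The real divergence, and the real gap, is in uniqueness. The paper does \emph{not} attempt a direct reproduction-formula argument. Instead it lifts: setting $\widehat w(t,z):=w(t,\pi_n(z))$, one has $\mathcal H_{\mathbb G}\widehat w=0$ with zero initial datum, and the key computation (using Proposition \ref{prop.upperestimateGamma} with $x=0$, $t=1/\widehat\delta$) gives
\[
\int_{\widehat S_\tau} e^{-\widehat\delta\|z\|^2}|\widehat w(t,z)|\,dt\,dz
\;\le\; C\int_{S_\tau} e^{-(\widehat\delta/\kappa)\rho_X^2(x)}|w(t,x)|\,dt\,dx,
\]
so \eqref{eq.growthuunique} transfers to the lifted group and one simply invokes the known uniqueness theorem \cite[Thm.\,6.5]{BLUpaper} on Carnot groups. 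This is short and avoids all the delicacies you flag.

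Your direct approach, by contrast, has a genuine hole at the $s\to 0^+$ passage. You claim that $\gamma(t-s,x,y)|w(s,y)|$ is ``integrable in $y$ uniformly in $s$ thanks to \eqref{eq.growthuunique}'', but \eqref{eq.growthuunique} is an integral over the \emph{joint} variables $(s,y)$; it only yields $\int e^{-\delta\rho_X^2(y)}|w(s,y)|\,dy<\infty$ for \emph{almost every} $s$, with no uniform bound and no control as $s\downarrow 0$. Without a pointwise hypothesis of the form $|w(s,y)|\le C e^{\delta\rho_X^2(y)}$ you cannot run dominated convergence here, and nothing in your argument produces such a bound. The same issue contaminates the $R\to\infty$ limit at each fixed $s$ (you need $\int\gamma(t-s,x,\cdot)|w(s,\cdot)|<\infty$ to even make sense of the limiting reproduction formula). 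Repairing this typically requires either an additional averaging in $s$ (working with $\int_0^\sigma\!\!\int\ldots\,ds$ rather than a fixed time slice) or an a priori pointwise estimate obtained from local parabolic regularity, neither of which you supply. The paper's lifting trick is precisely what lets one bypass this difficulty.
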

Before proving Theorem \ref{thm.mainExistenceCauchy} we establish, for a
future reference, the following easy lemma.
\begin{lemma} \label{lem.integrabilityexp}
For every fixed $\theta>0$, we have
\begin{equation} \label{eq.integrabphiexp}%
\phi(y):=\exp\left(-\theta\rho_{X}^{2}(y)\right)  \in L^{1}(\mathbb{R}^{n}).
\end{equation}
\end{lemma}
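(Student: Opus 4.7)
\medskip

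\noindent\textbf{Proof proposal.} The plan is to exploit the $\delta_\lambda$-homogeneity of $d_X$ (Proposition \ref{Prop properties distances}-(i)) together with the global volume estimate \eqref{eq.NSWmodificata} to obtain a polynomial bound on $|B_X(0,R)|$, and then decompose $\mathbb{R}^n$ into dyadic $d_X$-annuli around the origin so that the Gaussian factor $\exp(-\theta\rho_X^2)$ dominates the volume growth.

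First I would observe that, since $B_X(0,\lambda) = \delta_\lambda\bigl(B_X(0,1)\bigr)$ for every $\lambda>0$ by Proposition \ref{Prop properties distances}-(i), and since the Jacobian of $\delta_\lambda$ equals $\lambda^{q}$ (with $q$ as in \eqref{eq.defiqH1}), one has
$$
|B_X(0,R)| = R^{q}\,|B_X(0,1)|\qquad\text{for every $R>0$.}
$$
(Alternatively, the same polynomial-in-$R$ control follows directly from the iterated doubling property \eqref{eq.doublingesplicita}.) In particular, setting $A_k:=\{y\in\mathbb{R}^n:\,2^k\leq\rho_X(y)<2^{k+1}\}$ for $k\geq 0$ and $A_{-1}:=B_X(0,1)$, we get
$$
|A_k|\leq |B_X(0,2^{k+1})| = 2^{q(k+1)}\,|B_X(0,1)|\qquad\text{for every $k\geq -1$.}
$$

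Since $\mathbb{R}^n = \bigcup_{k\geq -1}A_k$ (disjoint union) and $\exp(-\theta\rho_X^2(y))\leq \exp(-\theta\,2^{2k})$ for every $y\in A_k$ (with the convention that the bound is $1$ on $A_{-1}$), I would estimate
$$
\int_{\mathbb{R}^n}\exp\bigl(-\theta\rho_X^2(y)\bigr)\,\d y
\leq |B_X(0,1)| + \sum_{k=0}^{+\infty}2^{q(k+1)}\,|B_X(0,1)|\,\exp\bigl(-\theta\,2^{2k}\bigr).
$$
The series on the right-hand side converges (trivially, since $2^{qk}\exp(-\theta\,2^{2k})$ decays faster than any geometric sequence), whence $\phi\in L^1(\mathbb{R}^n)$.

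The proof involves no real obstacle: the only points requiring care are (i) writing $|B_X(0,R)|$ as a monomial in $R$, which is where the homogeneity assumption (H1) enters in an essential way (without it, one would need to keep track of the full polynomial appearing in Theorem \ref{thm.NSWglobal}, but the argument would still go through because the Gaussian decay dominates any polynomial growth), and (ii) the trivial convergence of the doubly-exponential series, which can be packaged via estimate \eqref{eq.stimaExpAlto} if one prefers a clean one-line bound.
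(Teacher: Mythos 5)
Your proof is correct, and it reaches the conclusion by a route that differs from the paper's in its concrete implementation, though both are built on the same core idea of dyadic decomposition plus homogeneity. The paper decomposes $\mathbb{R}^n$ into annuli $\{2^k \leq \mathcal{N}(y) < 2^{k+1}\}$ defined by the \emph{explicit} homogeneous norm $\mathcal{N}(y)=\sum_j |y_j|^{1/\sigma_j}$, first replaces the Gaussian factor by the polynomial bound $\exp(-\theta\rho_X^2)\leq c\,\rho_X^{-2q}$ via Lemma \ref{lem.stimeExp}-(ii), and then performs the substitution $y=\delta_{2^k}(u)$ to reduce each annular integral to a fixed one over $\{1\leq\mathcal{N}<2\}$ times $2^{-kq}$; convergence then follows from a geometric series. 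You instead decompose directly along $d_X$-annuli $\{2^k\leq\rho_X(y)<2^{k+1}\}$, bound the integral over each annulus by its Lebesgue measure times the supremum of the Gaussian factor, and use the exact scaling identity $|B_X(0,R)|=R^q|B_X(0,1)|$ (which follows from Proposition \ref{Prop properties distances}-(i) and the Jacobian of $\delta_\lambda$). Your version avoids the auxiliary norm $\mathcal{N}$, the polynomial substitute for the Gaussian, and the change of variables entirely, at the small price of invoking the (standard, and established in the paper via Proposition \ref{prop.SegmentProperty}) fact that $B_X(0,1)$ has finite Lebesgue measure. Both arguments are sound; yours is arguably the more streamlined, and it runs parallel to the annular decompositions already used in Propositions \ref{prop.upperestimateGamma} and \ref{prop.lowerestimateGamma}, whereas the paper's is self-contained in that it needs no quantitative information about $d_X$-balls beyond what $\delta_\lambda$-homogeneity gives.
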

\begin{proof}
Since $\phi$ is bounded, it suffices to show that $\phi$ is integrable at
infinity. To this end, if $\sigma_{1},\ldots,\sigma_{n}$ are as in
\eqref{eq.defidela}, we consider the \emph{homogeneous norm}
$$
\mathcal{N}(y):=\sum_{j=1}^{n}|y_{j}|^{1/\sigma_{j}}\qquad(y\in\mathbb{R}^{n}),
$$
and we prove that $\phi$ is integrable on the set $\mathcal{O}:=\{\mathcal{N}%
\geq1\}$. Now, using Lemma \ref{lem.stimeExp}, and taking into account that
both $\mathcal{N}$ and $\rho_{X}\ $are $\delta_{\lambda}$-homogeneous of
degree $1$, we have%
\begin{align*}
&  \int_{\mathcal{O}}\phi(y)\,\d y\leq 
  c\,\int_{\mathcal{O}}\frac{1}{\rho_{X}^{2q}(y)}\,\d y=
  c\,\sum_{k=0}^{+\infty}\int_{\{2^{k}\leq\mathcal{N}<2^{k+1}\}}\frac{1}{\rho_{X}^{2q}(y)}\,\d y\\[0.2cm]
  &  \qquad(\text{performing the change of variable $y=\delta_{2^{k}}(u)$})\\[0.2cm]
  &  \qquad=c\,\left(
  \int_{\{1\leq\mathcal{N}<2\}}\frac{1}{\rho_{X}^{2q}(u)}\,\d u\right)  
  \cdot\sum_{k=0}^{+\infty}\frac{1}{2^{qk}}<+\infty.
\end{align*}
This ends the proof.
\end{proof}
We are now ready to prove Theorem \ref{thm.mainExistenceCauchy}.
\begin{proof} [Proof of Theorem \ref{thm.mainExistenceCauchy}]
Assume that $f\in C(\R^n)$ satisfies \eqref{eq.growthf}, and let $u$ be as in
\eqref{eq.defiusolution}. \medskip

\textsc{Step I.} Let us show that $u$ is well defined and solves 
 \eqref{eq.mainCauchyPbHt}
 in some $S_{T}$. To this end, let $R > 0$ be arbitrarily fixed,
 and let $\phi_{R}\in C_{0}^{0}(\mathbb{R}^{n})$ satisfy the following properties
\begin{itemize}
 \item $\phi_{R} \equiv 1$ on $\{\rho_{X}<R\}$;
 \item $\phi_{R} \equiv 0$ on $\{\rho_{X}>2R\}$;
 \item $0\leq \phi_R\leq 1$ on $\R^n$.
\end{itemize}
 Then, we can write
\begin{align*}
 u(t,x) =
 \int_{\mathbb{R}^{n}}\gamma(t,x,y)\,f(y)\phi_{R}(y)\,\d y
 + 
 \int_{\mathbb{R}^{n}}\gamma(t,x,y)\,f(y)\big(1-\phi_{R}(y)\big)\,\d y
 \equiv u_{1}(t,x) + u_{2}(t,x).
\end{align*}
 Since $f\phi_{R}$ is bounded continuous, by \cite[Theorem 4.1]{BBHeat} we know
 that $u_{1}$ is well defined for every $t>0$, and it solves 
 \eqref{eq.mainCauchyPbHt}
 with initial datum $f\phi_{R}$ on the whole of on $(0,+\infty)\times\R^n$. In particular,
 since $\phi_R\equiv 1$ on the set
  $\{\rho_X < R\}$, for every $x\in\R^n$ with $\rho_X(x) < R$
 we have
 $$\lim_{t\rightarrow 0^{+}} u_{1}(t,x) = (f\phi_R)(x) = f(x).$$
 We now prove that there exists a suitable $T > 0$, \emph{independent of the chosen $R$},
 such that the following facts hold on the
 bounded stripe $S_{T/\mu,R} := (0,T/\mu)\times\{\rho_X<R\}$:
 $$\text{(i)\,\,$u_{2}$ is well defined; \quad (ii)\,\,$u_2$ it solves 
 the equation $\mathcal{H}u=0$;\quad
 (iii)\,\,$u_{2}(t,x)\rightarrow 0$ as $t\rightarrow0^{+}$}.$$
 As for (i)-(ii) we observe that, by the Gaussian estimate 
 \eqref{eq.GuassianGamma}, we have
\begin{equation} \label{eq.estimutxprima}
\begin{split}
& \vert u_{2}(t,x)\vert 
\leq\frac{\varrho}{\vert B(x,\sqrt{t})\vert}
\int_{\{\rho_{X}(y)>2R\}}\exp\left(-\frac{d_X^2(x,y)}{\varrho t}\right)\,
 \vert f(y)\vert \,\d y \\[0.1cm]
& \quad = \frac{\varrho}{\vert B(x,\sqrt{t})\vert}
  \int_{\{\rho_{X}(y) > 2R\}}\exp\left(-\frac{d_X^2(x,y)}{\varrho t}
  +\mu\rho_{X}^{2}(y)\right)\,\vert f(y)\vert\,\exp
  \big(-\mu\rho^{2}_{X}(y)\big)\,\d y.
\end{split}
\end{equation}
On the other hand, for every $x,y\in\R^n$ satisfying
$\rho_X(x) < R$ and $\rho_X(y) > 2R$, one has
$$
d_X(x,y) \geq\rho_{X}(y)  -\rho_{X}(x)  
 \geq \frac{\rho_{X}(y)}{2};$$
 as a consequence, we get
\begin{equation} \label{eq.expleqdausare}
\exp\left(-\frac{d_X^{2}(x,y)}{\varrho t}
  +\mu\rho_{X}^{2}(y)\right)
  \leq 
  \exp\left(-\rho_X^2(y)\,\Big(\frac{1}{4\varrho t}-\mu\Big)\right)\leq 1,
\end{equation}
as soon as $x\in\{\rho_X < R\}$ and
$\frac{1}{4\varrho t}-\mu > 0$, that is
(setting $T_1 := 1/(4\varrho)$)
$$t < \frac{T_1}{\mu}.$$ 
Gathering together all these facts, for fixed $(t,x)\in S_{T_1/\mu, R}$ we obtain
$$
\vert u_{2}(t,x)\vert \leq c_{t,x}\int_{\mathbb{R}^{n}}
 \vert f(y)\vert\,\exp\big(-\mu\rho_{X}^{2}(y)\big)\,\d y < +\infty.
$$
Now, using the Gaussian estimates \eqref{Gassian_derivatives} for the
derivatives of $\gamma$, and arguing exactly as above, 
one can easily prove that
$u_2\in C^2(S_{T/\mu, R})$ and $\mathcal{H}u_{2}=0$
on $S_{T/\mu, R}$, where 
\begin{equation} \label{eq.choiceTCauchy} 
 T := \min\bigg\{T_1,\frac{1}{4C}\bigg\}\quad
\text{and $C$ is as in \eqref{Gassian_derivatives}}.
\end{equation}
Next, we show that for $t\rightarrow 0^+$ we have $u_{2}(t,x)\rightarrow0$
if $x\in\R^n$ satisfies $\rho_X(x) < R$. To this end we
first observe that, by \eqref{eq.NSWmodificata}, for every $t>0$ and $x\in\mathbb{R}^{n}$ we have
\begin{equation} \label{lower ball}%
\vert B_{X}(x,\sqrt{t})\vert \geq\gamma_{1}\,\sum_{h=n}^{q}
  f_{h}(y)\,t^{h/2}\geq\gamma_{1}\,f_{q}\,t^{q/2}=\kappa_{q}\,t^{q/2},
\end{equation}
with $\kappa_q := \gamma_1\,f_q$ 
(remind that $f_{n},\ldots,f_{q}\geq0$ and $f_{q}$ is a positive constant).
As a consequence, by combining \eqref{lower ball},
\eqref{eq.estimutxprima} and \eqref{eq.expleqdausare},
for every $(t,x)\in S_{T/\mu, R}$ we obtain the estimate
$$
|u_{2}(t,x)| \leq c\int_{\{\rho_{X}(y) > 2R\}}
\frac{e^{-\rho_{X}^{2}(y)\big(\frac{1}{4\varrho t}-\mu\big)}}
{t^{q/2}}\cdot|f(y)|\,\exp\big(-\mu\rho^2_{X}(y)\big)\,\d y.
$$
We are going to show that, by Lebesgue's theorem, the last integral goes to
zero as $t\rightarrow0^{+}$. On the one hand,
for every fixed $y\in\R^n$ with $\rho_X(y) > 2R$, we have
$$
\lim_{t\to 0^+}
\bigg(\frac{e^{-\rho_{X}^2(y)\big(\frac{1}{4\varrho t}-\mu\big)}}
 {t^{q/2}}\cdot \vert f(y)\vert\,e^{-\mu\rho_{X}^2(y)}\bigg)
 \leq c_{y}\cdot \lim_{t\to 0^+}
 \frac{e^{-4R^{2}\big(\frac{1}{4\varrho t}-\mu\big)}}{t^{q/2}} = 0.
$$
On the other hand, for every $t > 0$ and every $y\in\R^n$ satisfying
$\rho_X(y) > 2R$, one has
\begin{align*}
& \frac{e^{-\rho_{X}^{2}(y)\big(\frac{1}{4\varrho t}-\mu\big)}}
{t^{q/2}}\cdot \vert f(y)\vert\,e^{-\mu\rho_{X}^2(y)}   \leq
 \frac{e^{-4R^{2}\big(\frac{1}{4\varrho t}-\mu\big)}}{t^{q/2}}
 \cdot \vert f(y)\vert\,e^{-\mu\rho_{X}^2(y)} \\
& \qquad
  = \sup_{t > 0}\bigg(\frac{e^{-4R^{2}\big(\frac{1}{4\varrho t}-\mu\big)}}{t^{q/2}}\bigg)
    \cdot \vert f(y)\vert\,e^{-\mu\rho_{X}^2(y)}
    \equiv c\,\vert f(y)\vert\,e^{-\mu\rho_{X}^2(y)}\in L^{1}(\mathbb{R}^{n}),
\end{align*}
 and thus, by Lebesgue's theorem, we conclude that
 $$\lim_{t\to 0^+}u_{2}(t,x) = 0 \qquad\text{for every $x\in\R^n$ with $\rho_X(x) < R$}.$$
 Summing up, we have proved that $u_2$ satisfies (i)-to-(iii)
 on $S_{T/\mu, R}$, as desired. \vspace*{0.05cm}
 
 Finally, due to the arbitrariness of $R > 0$, 
 we then conclude that $u$ is a classical solution of
 problem \eqref{eq.mainCauchyPbHt} on the stripe $S_{T/\mu}$ 
 (with $T$ as in \eqref{eq.choiceTCauchy}). \medskip
 
 \textsc{Step II.} Let us show that $u$ satisfies a bound 
 \eqref{eq.growthuunique} for some $\delta,\tau > 0$. 
 Since $u$ is a continuous function on the stripe $S_{T/\mu}$, the integral
 $$
  \int_{0}^{\tau}\int_{\{\rho_X(x)\leq R\}}\vert u(t,x)\vert\,
  \exp\big(-\delta\rho_{X}^{2}(x)\big)\,\d t\,\d x
 $$
is finite for every choice of $\delta,R > 0$ and every $0<\tau<T/\mu$. 
So, it is enough to show that there exist suitable
 $\delta \in (0,+\infty)$ and $0<\tau<T/\mu$ such that
$$ 
 \int_{0}^{\tau}\int_{\{\rho_X(x)> 1\}}\vert u(t,x)\vert\,
  \exp\big(-\delta\rho_{X}^{2}(x)\big)\,\d t\,\d x < +\infty.
 $$
By the very definition of $u$ in \eqref{eq.defiusolution}, we have
\begin{align*}
&  \int_{0}^{\tau}\int_{\{\rho_X(x)> 1\}}\vert u(t,x)\vert\,
  \exp\big(-\delta\rho_{X}^{2}(x)\big)\,\d t\,\d x \\[0.1cm]
& \qquad \leq\int_{0}^{\tau}\int_{\{\rho_X(x)> 1\}}\left(
  \int_{\mathbb{R}^{n}}\gamma(t,x,y)\,\vert f(y)\vert \,\d y\right)
  \exp\big(-\delta\rho_{X}^{2}(x)\big)\,\d t\,\d x.
\end{align*}
We then split the space integral as follows
\begin{align*}
& \int_{\{\rho_X(x)> 1\}}\left(
  \int_{\mathbb{R}^{n}}\gamma(t,x,y)\,\vert f(y)\vert \,\d y\right)
  \exp\big(-\delta\rho_{X}^{2}(x)\big)\,\d x \\[0.1cm]
&  \qquad 
   = \int_{\{\rho_X(x)> 1\}}\left(
  \int_{\{\rho_X(y)\geq 2\rho_X(x)\}}\gamma(t,x,y)\,\vert f(y)\vert \,\d y\right)
  \exp\big(-\delta\rho_{X}^{2}(x)\big)\,\d x \\
&  \qquad\quad + \int_{\{\rho_X(x)> 1\}}\left(
  \int_{\{\rho_X(y)< 2\rho_X(x)\}}\gamma(t,x,y)\,\vert f(y)\vert \,\d y\right)
  \exp\big(-\delta\rho_{X}^{2}(x)\big)\,\d x \\[0.1cm]
&  \qquad \equiv A(t)+B(t) .
\end{align*}
 As for $A(t)$, by combining the Gaussian estimate \eqref{eq.GuassianGamma} with \eqref{lower ball},
 we get
\begin{align*}
 A(t)  & \leq \frac{c\varrho}{t^{q/2}}\,\int_{\{\rho_{X}(x) > 1\}}
 \left(\int_{\{\rho_X(y)\geq 2\rho_X(x)\}}\!\!\!\!\!\!
  e^{-\frac{d_X^2(x,y)}{\varrho t}+\mu\rho_{X}^{2}(y)}\cdot 
  e^{-\mu\rho_{X}^{2}(y)}\,\vert f(y)\vert \,\d y \right)\cdot
   e^{-\delta\rho_{X}^{2}(x)}\,\d x;
\end{align*}
 moreover, using the fact that $d_X(x,y)\geq \rho_X(y)-\rho_X(x)$ for every $x,y\in\R^n$, 
 one has
\begin{align*}
 \exp\bigg(-\frac{d_X^{2}(x,y)}{\varrho t}+\mu\rho_{X}^2(y)\bigg) 
 \leq \exp\bigg(\frac{\rho_X^2(x)}{\varrho t}\bigg) 
 \cdot\exp\bigg(-\rho_X^2(y)\Big(\frac{1}{2\varrho t}-\mu\Big)\bigg)
  = (\bigstar).
\end{align*}
 As a consequence, since in $A(t)$ we have
 $\rho_X(y)\geq 2\rho_X(x)$ and $\rho_X(x) > 1$, we obtain
 \begin{align*}
  (\bigstar)
  & \leq \exp\bigg(\frac{\rho_X^2(x)}{\varrho t}-4\rho_X^2(x)
   \Big(\frac{1}{2\varrho t}-\mu\Big)\bigg)
   = \exp\bigg(-\rho_X^2(x)\Big(\frac{1}{\varrho t}-4\mu\Big)\bigg) \\[0.1cm]
   & \leq \exp\bigg(-\Big(\frac{1}{\varrho t}-4\mu\Big)\bigg),
 \end{align*}
 provided that $t\in (0,T/\mu)$, see \eqref{eq.choiceTCauchy}. Using this last estimate,
 we get
 \begin{align*}
 A(t) &  \leq\frac{c\varrho}{t^{q/2}}\,e^{-\big(\frac{1}{\varrho t}-4\mu\big)}\,
 \left(\int_{\mathbb{R}^{n}} |f(y)|\,\exp\big(-\mu\rho_{X}^2(y)\big)\,\d y\right)
  \left(\int_{\mathbb{R}^{n}}\exp\big(-\delta\rho_{X}^{2}(x)\big)\,\d x\right)  
  \\[0.2cm]
&  =\frac{c'}{t^{q/2}}e^{-\big(\frac{1}{\varrho t}-4\mu\big)}%
\end{align*}
where we have exploited Lemma \ref{lem.integrabilityexp}. From this, we finally obtain
$$
\int_{0}^{\tau}A(t)\,\d t
 \leq \int_{0}^{\tau}\frac{c_{1}}{t^{q/2}}e^{-\big(\frac{1}{\varrho t}-4\mu\big)}\,\d t<
 +\infty, \quad \text{for any $\tau\in (0,T/\mu)$ and any $\delta > 0$. }
$$
As for $B(t)$, since $\rho_{X}(y) < 2\rho_X(x)$, we have
\begin{align*}
B(t)  &  
 = \int_{\{\rho_X(x)> 1\}}\left(
  \int_{\{\rho_X(y)< 2\rho_X(x)\}}\gamma(t,x,y)\,e^{\mu\rho_X^2(y)}\cdot
   \vert f(y)\vert\,e^{-\mu\rho_X^2(y)} \,\d y\right)
  e^{-\delta\rho_{X}^{2}(x)}\,\d x \\[0.2cm]
  & \leq 
  \int_{\R^n}|f(y)|\,\exp\big(-\mu\rho_X^2(y)\big)\left(
  \int_{\R^n}\gamma(t,x,y)\,e^{(4\mu-\delta)\rho_X^2(x)}\,\d x\right)\d y.
\end{align*}
Thus, if we choose $\delta\geq4\mu$, from Theorem \ref{exTheoremB}-(iv) and (vi) we obtain
$$
\int_{\mathbb{R}^{n}}\gamma(t,x,y)\,e^{-(\delta-4\mu)\rho_X^2(x)}\,\d x
 \leq \int_{\mathbb{R}^{n}}\gamma(t,x,y)\,\d x=1.
 $$
 As a consequence, we get
 $$
 B(t) \leq\int_{\mathbb{R}^{n}}\vert f(y)\vert\,e^{-\mu\rho_X^2(y)} \,\d y =: c <+\infty,
 $$
 from which we derive that
 $$\int_{0}^{\tau}B(t)\,\d t<+\infty\qquad\text{for any $\tau \in (0,T/\mu)$
  and any $\delta\geq 4\mu$}.$$
 Summing up, we conclude that $u$ satisfies
 \eqref{eq.growthuunique} for every $\delta\geq 4\mu$ and every $\tau\in (0,T/\mu)$.
 \medskip
 
 \textsc{Step III.} Let us prove the uniqueness result. By linearity, it is
enough to show that if for some $\tau>0$ the function $u\in C^{2}(S_{\tau})$
is a classical solution of
\begin{equation} \label{eq.mainPbHomog}%
 \begin{cases} 
 \mathcal{H}u=0 & \text{in $S_{\tau}$},\\
 u(0,x)=0 & \text{for $x\in\mathbb{R}^{n}$}%
 \end{cases}
\end{equation}
and satisfies \eqref{eq.growthuunique}, then $u\equiv0$ on $S_{\tau}$.
Denoting again by $\pi_{n}$ the projection of $\mathbb{R}^{N}$ onto
$\mathbb{R}^{n}$, we set
$$
\widehat{u}:\widehat{S}_{\tau}:=(0,\tau)\times\mathbb{R}^{N}\rightarrow
\mathbb{R},\qquad\widehat{u}(t,z):=u\big(t,\pi_{n}(z)\big)  .
$$
Obviously, $\widehat{u}\in C^{2}(\widehat{S}_{\tau})$; moreover, since $u$
solves \eqref{eq.mainPbHomog} and $\mathcal{H}_{\mathbb{G}}=
\sum_{j=1}^{m}Z_{j}^{2}-\partial_{t}$ is a lifting of $\mathcal{H}$ (see
\eqref{eq.liftinHHG}), it is easy to check that $\widehat{u}$ is a classical
solution of
\begin{equation} \label{eq.mainPbCauchyhomG}%
\begin{cases}
\mathcal{H}_{\mathbb{G}}\widehat{u}=0 & \text{in $\widehat{S}_{\tau}$},\\
\widehat{u}(0,z)=0 & \text{for $z\in\mathbb{R}^{N}.$}%
\end{cases}
\end{equation}
 We claim that there exists $\widehat{\delta}>0$ such that
\begin{equation} \label{eq.growthvunique}%
\int_{\widehat{S}_{\tau}}\exp\big(-\widehat{\delta}\,\|z\|^{2}\big)
 \,|\widehat{u}(t,z)|\,\d t\,\d z<+\infty.
\end{equation}
Once this is proved, by \cite[Theorem 6.5]{BLUpaper} we derive that
$\widehat{u}\equiv0$ on $\widehat{S}_{\tau}$, and thus $u\equiv0$ on $S_{\tau}.$
\medskip

 To prove \eqref{eq.growthvunique}, let $\widehat{\nu}>0$ to be fixed
in a moment. By using Proposition \ref{prop.upperestimateGamma} (with
$x = 0$ and $t=\widehat{\delta}^{-1}>0$), we obtain the following computation
\begin{equation}\label{eq.mainestimUnique}%
\begin{split}
&  \int_{\widehat{S}_{\tau}}\exp\big(-\widehat{\delta}\,
 \|z\|^{2}\big)\,|\widehat{u}(t,z)|\,\d t\,\d z=
 \int_{\widehat{S}_{\tau}}
 \exp\big(-\widehat{\delta}\,\|(x,\xi)\|^2\big)\,
 |\widehat{u}(t,(x,\xi))|\,\d t\,\d x\,\d\xi\\[0.2cm]
&  \qquad=\int_{S_{\tau}}\left(\int_{\mathbb{R}^{p}}\exp\big(-\widehat{\delta}\,\|
 (x,\xi)\|^{2}\big)\,\d\xi\right) |u(t,x)|\,\d t\,\d x\\[0.2cm]
&  \qquad\leq\frac{\kappa}{\widehat{\delta}^{Q/2}\,|
 B_{X}(0,\widehat{\delta}^{-1/2})| }\,\int_{S_{\tau}}\exp\left(
-\frac{\widehat{\delta}\,\rho_{X}^{2}(x)}{\kappa}\right)|u(t,x)|\,\d t\,\d x,
\end{split}
\end{equation}
for a suitable constant $\kappa>1$ . As a
consequence, if we choose $\widehat{\delta}:=\delta\cdot\kappa$, with $\delta$ as
in \eqref{eq.growthuunique}, from \eqref{eq.mainestimUnique} we immediately
deduce the claimed \eqref{eq.growthvunique}. This ends the proof.
\end{proof}
\begin{remark}

\label{Remark Carnot o no}In the special case of Carnot groups, the
\emph{uniqueness} part of our result was already known, after \cite[Thm.
6.5]{BLUpaper}, and in our proof (Step III) we have explicitly exploited that
result, relying on the assumption (H3) and the lifting technique. On the other
hand, in the proof of our existence result (Steps I-II) we have never
exploited assumption (H3) and the lifing technique. Actually, our proof in
Steps I-II works also in Carnot groups, and our existence result extends the
one proved in \cite[Corollary 6.2]{BLUpaper}, where a stronger pointwise
(instead of integral) bound was assumed on $f$.
\end{remark}
If the initial datum satisfies a slightly stronger assumption than
\eqref{eq.growthf}, we can refine the previous results getting existence and
uniqueness of the solution for every $t>0$:
\begin{proposition} \label{prop.growthfless2}
 Let $f\in C(\mathbb{R}^{n})$ satisfy
 the growth assumption \eqref{eq.growthf} in the following stronger form:
 \emph{there exist $\alpha\in(0,2)$ and $\mu>0$ such that}
 \begin{equation}\label{eq.growthfstronger}%
 \int_{\mathbb{R}^{n}}\vert f(y)\vert\,\exp\big(-\mu\rho_{X}^{\alpha}(y)\big)\,\d y<+\infty. 
\end{equation}
Then, the function $u$ defined by \eqref{eq.defiusolution} is a classical
solution of \eqref{eq.mainCauchyPbHt} on 
$S_{\infty}:=(0,+\infty)\times\mathbb{R}^{n}$.
\end{proposition}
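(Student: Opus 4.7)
The plan is to mimic Step I of the proof of Theorem \ref{thm.mainExistenceCauchy}, exploiting the key fact that a \emph{subquadratic} growth of the initial datum allows the quadratic Gaussian decay of $\gamma$ to dominate $|f|$ for every $t>0$, so the time threshold $T/\mu$ of that theorem simply disappears. Uniqueness is not claimed in the proposition and I will not address it: extending Step III would require an ad hoc global-in-time uniqueness statement on $\mathbb{G}$.

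I would fix $R>0$ and use the same cut-off $\phi_R$ as in Step I of Theorem \ref{thm.mainExistenceCauchy}, splitting $u=u_1+u_2$, with $u_1$ corresponding to the bounded datum $f\phi_R$ and $u_2$ to the tail $f(1-\phi_R)$. By Theorem \ref{ThmC}-(vii), $u_1$ is already a classical solution of the Cauchy problem with datum $f\phi_R$ on the whole of $(0,+\infty)\times\mathbb{R}^n$, so the real work is confined to $u_2$ on $(0,+\infty)\times\{\rho_X<R\}$.

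The decisive point is the elementary observation that, since $\alpha\in(0,2)$, the function $r\mapsto -\sigma r^2+\mu r^\alpha$ attains a finite supremum on $[0,+\infty)$ for every $\sigma>0$, and this supremum is non-decreasing in $1/\sigma$. Applied with $\sigma=1/(8\varrho t)$ and $r=\rho_X(y)$, this shows that, after writing $|f(y)|=\bigl(|f(y)|e^{-\mu\rho_X^\alpha(y)}\bigr)\cdot e^{\mu\rho_X^\alpha(y)}$, the growing factor $e^{\mu\rho_X^\alpha(y)}$ is absorbed into one-eighth of the Gaussian decay from \eqref{eq.GuassianGamma}, with a correction constant that stays bounded on each subinterval $t\in(0,t_0]$. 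Combining this with the inequality $d_X(x,y)\geq\rho_X(y)/2\geq R$ valid on the relevant set, the splitting $d_X^2(x,y)\geq\tfrac{1}{2}R^2+\tfrac{1}{8}\rho_X^2(y)$, the volume lower bound $|B_X(x,\sqrt t)|\geq\kappa_q t^{q/2}$, and hypothesis \eqref{eq.growthfstronger}, I obtain a uniform-in-$t$ $L^1(\mathbb{R}^n)$-dominating function for the integrand of $u_2(t,x)$ on each strip $(0,t_0]\times\{\rho_X<R\}$.

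From this single estimate three conclusions follow. First, $u_2(t,x)$ is well defined for every $t>0$ and every $x$ with $\rho_X(x)<R$. Second, differentiating under the integral sign using the derivative bound \eqref{Gassian_derivatives} in place of \eqref{eq.GuassianGamma} gives $u_2\in C^2$ and $\mathcal{H}u_2=0$ on $(0,+\infty)\times\{\rho_X<R\}$: the extra factors $t^{-(k+r/2)}$ in the derivative estimate are absorbed by the prefactor $t^{-q/2}\exp(-R^2/(2\varrho t))$, which is bounded on $(0,+\infty)$. Third, the initial condition $u_2(t,x)\to 0$ as $t\to 0^+$ follows from Lebesgue's dominated convergence, using the same dominating function together with the pointwise limit $\gamma(t,x,y)\to 0$ for every fixed $y$ at positive $d_X$-distance from $x$ (immediate from \eqref{eq.GuassianGamma} and the lower volume bound). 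Since $R>0$ is arbitrary, this yields the claim on $S_\infty$. I do not foresee any genuine obstacle beyond bookkeeping of constants; the only substantive novelty with respect to Step I of Theorem \ref{thm.mainExistenceCauchy} is the finiteness of $\sup_{r\geq 0}(-\sigma r^2+\mu r^\alpha)$, which is precisely where the hypothesis $\alpha<2$ is indispensable.
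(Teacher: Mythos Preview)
Your approach is correct, but it takes a considerably longer route than the paper's. You essentially re-run Step I of Theorem \ref{thm.mainExistenceCauchy} from scratch, carrying the subquadratic weight $e^{\mu\rho_X^\alpha}$ through all the estimates and using the finiteness of $\sup_{r\geq 0}(-\sigma r^2+\mu r^\alpha)$ to make the Gaussian dominate for every $t>0$. That works.

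The paper instead observes that the very same elementary fact you isolate (namely, boundedness of $r\mapsto \mu r^\alpha-\theta r^2$ on $[0,\infty)$ for each $\theta>0$) already implies that
\[
\int_{\mathbb{R}^n}|f(y)|\,e^{-\theta\rho_X^2(y)}\,\d y<+\infty\qquad\text{for \emph{every} }\theta>0,
\]
so hypothesis \eqref{eq.growthf} holds with an arbitrarily small constant. One then applies Theorem \ref{thm.mainExistenceCauchy} as a black box: $u$ is a classical solution on $S_{T/\theta}$ for every $\theta>0$, hence on $S_\infty$. This reduces the whole proof to two lines and avoids redoing any of the cut-off, domination, or DCT arguments. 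Your direct method has the minor advantage of being self-contained (it does not invoke Theorem \ref{thm.mainExistenceCauchy}), but the paper's reduction is the more economical way to use the structure already in place.
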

\begin{proof}
Using assumption \eqref{eq.growthfstronger}, it is easy to see that for
\emph{e\-ve\-ry} fixed $\theta>0$ one has%
\begin{equation} \label{eq.growthftheta}%
\int_{\mathbb{R}^{n}}\vert f(y)\vert\,\exp
 \big(-\theta\rho_{X}^{2}(y)\big)\,\d y<+\infty.
\end{equation}
As a consequence, from Theorem \ref{thm.mainExistenceCauchy} we derive that
the function $u$ in \eqref{eq.defiusolution} is a classical solution of
\eqref{eq.mainCauchyPbHt} on $S_{T/\theta}$ for every $\theta>0$,
hence on the whole of $S_{\infty}$.
\end{proof}
\section{An application to the Dirichlet problem for $\mathcal{H}$} \label{sec.appDirichlet}
 The aim of this section is to show how our global Gaussian estimates for
 $\Gamma$ can be used to study the solvability of the $\mathcal{H}$-Dirichlet
 problem on an \emph{arbitrary bounded domain} 
 $\Omega\subseteq\mathbb{R}^{1+n}$. 
 All the results we are going to present basically follow by combining
 the results of the previous sections with the investigations carried out 
 (in an abstract framework) in
 \cite{Kogoj, LancoUguz, LancTralUguz, TralliUguz}. \medskip
 
 To begin with, we need to establish the following proposition.
 \begin{proposition} \label{prop.SegmentProperty}
 The CC distance $d_{X}$ associated with our system
 $X=\{X_{1},\ldots,X_{m}\}$ of ho\-mo\-ge\-neous H\"{o}rmander's vector fields
 satisfies the so-called \emph{segment property:} for every fixed
 $x,y\in\mathbb{R}^{n}$ there exists a continuous path 
 $\gamma:[0,1]\rightarrow\mathbb{R}^n$ such that 
 $\gamma(0)=x,\,\gamma(1)=y$ and
 $$d_X(x,y) = d_X\big(x,\gamma(t)\big)+d_X\big(\gamma(t),y\big)\qquad\text{for
 all $0\leq t\leq 1$}.$$
\end{proposition}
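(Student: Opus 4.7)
The plan is to prove the segment property by exhibiting a length-minimizing sub-unit curve joining $x$ and $y$, and then to observe that the desired additivity is automatic along such a minimizer. This is essentially the Hopf--Rinow strategy adapted to the Carnot--Carath\'{e}odory setting, the key enabler being that $(\mathbb{R}^{n},d_{X})$ is proper.

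First I would check that closed $d_{X}$-balls are Euclidean-compact. Using the homogeneous norm $\mathcal{N}(y):=\sum_{j=1}^{n}|y_{j}|^{1/\sigma_{j}}$, which is $\delta_{\lambda}$-homogeneous of degree one and coercive in the Euclidean sense, the $\delta_{\lambda}$-homogeneity of $d_{X}$ (Proposition \ref{Prop properties distances}-(i)) gives
$$
d_{X}(x,0)=\mathcal{N}(x)\cdot d_{X}\big(\delta_{1/\mathcal{N}(x)}(x),0\big)
\geq c_{0}\,\mathcal{N}(x) \qquad (x\neq 0),
$$
where $c_{0}>0$ is the (positive continuous) minimum of $d_{X}(\cdot,0)$ over the Euclidean-compact set $\{\mathcal{N}=1\}$. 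Hence $d_{X}(x,0)\to+\infty$ as $|x|\to+\infty$, which together with the topological equivalence of $d_{X}$ and the Euclidean distance yields properness of $(\mathbb{R}^{n},d_{X})$.

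Next, fix $x,y\in\mathbb{R}^{n}$ and choose a minimizing sequence $\gamma_{n}\in C(r_{n})$ with $\gamma_{n}(0)=x$, $\gamma_{n}(1)=y$ and $r_{n}\searrow d_{X}(x,y)$. All the $\gamma_{n}$ are contained in the Euclidean-compact set $K:=\overline{B_{X}(x,r_{0})}$, and on $K$ the smooth vector fields $X_{1},\dots,X_{m}$ are uniformly bounded, so $|\gamma_{n}'(t)|\leq r_{0}\sum_{j}\|X_{j}\|_{L^{\infty}(K)}$. By Arzel\`{a}--Ascoli a subsequence converges uniformly to some continuous $\gamma:[0,1]\to\mathbb{R}^{n}$; after extracting weak-$\ast$ limits of the bounded controls $a_{j}^{n}$ in $L^{\infty}(0,1)$ and passing to the limit in the ODE, one finds $\gamma\in C(d_{X}(x,y))$, i.e., the infimum in the definition of $d_{X}(x,y)$ is attained.

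Finally, to establish the segment property, fix $t\in(0,1)$ and consider the reparametrizations $\alpha(s):=\gamma(ts)$ and $\beta(s):=\gamma(t+(1-t)s)$ on $[0,1]$. A direct chain-rule computation shows that $\alpha\in C(t\,d_{X}(x,y))$ and $\beta\in C((1-t)\,d_{X}(x,y))$, whence
$$
d_{X}(x,\gamma(t))\leq t\,d_{X}(x,y), \qquad
d_{X}(\gamma(t),y)\leq(1-t)\,d_{X}(x,y).
$$
Summing these two inequalities and comparing with the triangle inequality $d_{X}(x,y)\leq d_{X}(x,\gamma(t))+d_{X}(\gamma(t),y)$ forces equality throughout, which is exactly the segment property. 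The only genuinely delicate step is the existence of the minimizer $\gamma$, and the essential ingredient there is properness, which in turn rests on the $\delta_{\lambda}$-homogeneity of the system $X$.
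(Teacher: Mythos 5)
Your proof is correct and follows the same overall strategy as the paper: reduce the segment property to properness of $(\mathbb{R}^n,d_X)$ (equivalently, Euclidean-compactness of closed $d_X$-balls), then invoke the standard Hopf--Rinow machinery in the CC setting. The two proofs differ in how they handle each half. For the properness step, you derive a coercivity estimate $d_X(x,0)\ge c_0\,\mathcal{N}(x)$ directly from $\delta_\lambda$-homogeneity and the compactness of the unit sphere $\{\mathcal{N}=1\}$; the paper instead picks a small $r$ with $B_X(0,r)\subseteq B_E(0,1)$ (via topological equivalence) and then dilates, writing $\delta_{r/R}(B_X(0,R))=B_X(0,r)$, which bounds $B_X(0,R)$ and hence every $B_X(x,\rho)\subseteq B_X(0,\rho+d_X(x,0))$. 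These are essentially interchangeable; your coercivity route is slightly more quantitative. For the geodesic-existence step, the paper simply observes that the proof of \cite[Corollary 5.15.6]{BLUlibro} (given there for Carnot groups) transfers verbatim once boundedness of balls is known, whereas you spell out the Arzel\`a--Ascoli plus weak-$\ast$ compactness argument explicitly. Your version is more self-contained; the paper's is shorter because it delegates the standard part to the reference. Both are correct.
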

 \begin{proof}
 This fact has been proved in \cite[Corollary 5.15.6]{BLUlibro} in
 the context of Carnot groups. Actually, the same proof can be repeated in our
 setting; the only nontrivial point that must be checked is that 
 the $d_X$-balls $B_{X}(x,\rho)$ are bounded in the
 Euclidean sense (for all $x\in\R^n$ and $\rho > 0$). \vspace*{0.07cm}
 
 To prove this fact, we argue as follows. First of all, since the distance
 $d_{X}$ is topologically equivalent to the Euclidean distance, there exists
 some $r > 0$ such that the Euclidean
 ball $B_{E}(0,1)$ contains the $d_X$-ball $B_{X}(0,r)$. 
 On the other hand, for every $R>0$ we have
 $$
 \delta_{r/R}\big(B_{X}(0,R)\big) = 
 B_{X}\left(
 0,r\right) \subseteq B_{E}(0,1);
 $$
 hence, $\delta_{r/R}(B_{X}(0,R))$ is
 bounded in the Euclidean sense and, by the explicit form of $\delta_{r/R}$, the
 same is true for $B_{X}(0,R)$. From this, since for any $x\in\R^n$ and
 $\rho > 0$ we have
 $B_{X}(x,\rho)  \subseteq
 B_{X}(0,R)$, with $R=\rho+d_{X}(x,0)$, we conclude that
 every $d_X$-ball is bounded in the Euclidean sense.
\end{proof}
 Using the segment property of $d_X$, jointly with the properties
 of $\Gamma$ listed in Theorem \ref{ThmC} and the global Gaussian estimates
 \eqref{eq.GuassianGamma}
 in Theorem \ref{thm.mainGaussianGamma}, we can apply
 the axiomatic approach developed in \cite{LancoUguz}: denoting by
 $H$ the sheaf of functions defined as
 $$\Omega\mapsto H(\Omega) := \big\{u\in C^\infty(\Omega):\,\text{$\mathcal{H}u = 0$
 in $\Omega$}\big\},
 $$ 
 we have
 that $(\R^n,H)$ is a $\beta$-harmonic space satisfying the Doob convergence property.
 In this con\-text, given a fixed open set $\Omega\subseteq\R^{1+n}$, we say that
 \begin{itemize}
  \item a function $u:\Omega\to\R$ is \emph{$\mathcal{H}$-harmonic}
  in $\Omega$ if $u\in H(\Omega)$;
  \item a function $u:\Omega\to (-\infty,+\infty]$ is 
  \emph{$\mathcal{H}$-superharmonic in $\Omega$} if
  \begin{itemize} 
   \item[(a)] $u$ is lower semi-continuous (l.s.c., for short) in $\Omega$;
   \item[(b)] the set $\{x\in\Omega:\,u(x)<+\infty\}$ is dense in $\Omega$;
   \item[(c)] for every $v\in C(\overline{\Omega})$ such that $v\big|_{\Omega}\in H(\Omega)$
   and $v\leq u$ on $\de\Omega$ one has $v\leq u$ on $\Omega$.
 \end{itemize}
  \item a function $u:\Omega\to [-\infty,+\infty)$ is 
  \emph{$\mathcal{H}$-subharmonic in $\Omega$} if
  $-u$ is $\mathcal{H}$-superharmonic in $\Omega$.
 \end{itemize}
 We denote by $\overline{H}(\Omega)$ (resp.\,$\underline{H}(\Omega)$) 
 the (convex) cone of the $\mathcal{H}$-superharmonic 
 (resp.\,$\Ht$-subharmonic) functions
 in $\Omega$. Obviously, we have $\underline{H}(\Omega) = -\overline{H}(\Omega)$ and
 $\overline{H}(\Omega)\cap\underline{H}(\Omega) = H(\Omega)$. \medskip
 
 Let now $\Omega\subseteq\R^{1+n}$ be a fixed open set, and let
 $\varphi\in C(\de\Omega)$. We say that a function $u:{\Omega}\to\R$
 is a \emph{classical solution} of the $\Ht$-Dirichlet problem
 \begin{equation} \label{eq.DirPbHt}
  \begin{cases}
 \Ht u = 0 & \text{in $\Omega$}, \\
 u\big|_{\de\Omega} = \varphi
 \end{cases}
 \end{equation}
 if it satisfies the following properties:
 \begin{itemize}
  \item $u\in C(\overline{\Omega})$ and $u\big|_{\Omega} \in C^2(\Omega)$;
  \item $\Ht u = 0$ in $\Omega$ and $u\big|_{\de\Omega} = \varphi$.
 \end{itemize}
 Since $\Ht = \LL - \de_t$ satisfies the Weak Maximum Principle
 on every open subset of $\R^{1+n}$ (see, e.g., \cite[Example 8.20]{BBBook}),
 there exists at most one classical solution of the Dirichlet problem \eqref{eq.DirPbHt};
 however, the \emph{existence} of such a solution for a general $\varphi\in C(\de\Omega)$
 is not guaranteed.
 For this reason, we in\-tro\-du\-ce the so-called 
 \emph{Perron–Wiener–Brelot–Bauer} (PWBB, in short) solution
 of \eqref{eq.DirPbHt}. \vspace*{0.07cm}
 
 Following \cite{LancoUguz}, we first consider the functions
 \begin{align*}
  &  \overline{H}^\Omega_\varphi(x) :=
   \inf\big\{u(x):\,\text{$u\in \overline{H}(\Omega)$ and $\liminf_{\omega\to\omega_0}u(\omega)
   \geq\varphi(\omega_0)$ for all $\omega_0\in\de\Omega$}\big\} \qquad\text{and} \\
   & 
   \underline{H}^\Omega_\varphi(x) :=
   \sup\big\{u(x):\,\text{$u\in \underline{H}(\Omega)$ and $\limsup_{\omega\to\omega_0}u(\omega)
   \leq\varphi(\omega_0)$ for all $\omega_0\in\de\Omega$}\big\}.
 \end{align*}
 Then, since $(\R^{1+n},H)$ satisfies Doob'sconvergence property, it can be proved that
 $$\overline{H}^\Omega_\varphi \equiv \underline{H}^\Omega_\varphi =:
 H^\Omega_\varphi\in H(\Omega).$$
 We shall call this function
 the PWBB solution of \eqref{eq.DirPbHt}.
 Obviously, if $u$ is the classical solution of \eqref{eq.DirPbHt}, one has
 $u\equiv H_\varphi^\Omega$ on $\Omega$; on the other hand, even if
 $H_\varphi^\Omega$ can be constructed for an arbitrary $\varphi\in C(\de\Omega)$
 and it is always $\Ht$-harmonic in $\Omega$, one cannot expect (in general) that
 $$\lim_{\omega\to \omega_0}H^\Omega_\varphi(\omega) = \varphi(\omega_0) \qquad
 \text{for $\omega_0\in\de\Omega$}.$$
 The following definition is thus plainly justified.
 \begin{definition} \label{def.regularPoint}
  A point $\omega_0\in\de\Omega$ is called \emph{$\Ht$-regular} if
  \begin{equation} \label{eq.regularPoint}
   \lim_{\omega\to \omega_0}H^\Omega_\varphi(\omega) = \varphi(\omega_0) \qquad
 \text{for all $\varphi\in C(\de\Omega)$}.
 \end{equation}
 \end{definition}
 Due to the validity of the segment property for $d_X$, the \alt96 good' behavior 
 of
 $\Gamma$ in Theorem \ref{ThmC}, and the validity
 of \emph{global} Gaussian estimates for $\Gamma$, we are entitled
 to apply to our context all the abstract results established
 in \cite{Kogoj, LancoUguz, LancTralUguz, TralliUguz}. 
 As a consequence, we obtain several
 necessary/sufficient conditions for a point $\omega_0\in\de\Omega$ to be
 $\mathcal{H}$-regular (in the sense of Definition \ref{def.regularPoint}). \medskip
 
 Throughout the sequel,
 given any compact set $K\subseteq\R^{1+n}$, we define
 \begin{equation} \label{eq.defHBalayageGeneralK}
 \begin{split}
  & V_K(\omega) = \liminf_{z\to\omega}\big(W_K(z)\big), \qquad \text{where} \\
  & \text{$W_K(z):= \inf\big\{v(z):\,v\in\overline{H}(\R^n)$,\,$v\geq 0$ on $\R^{1+n}$
  and $v\geq 1$ on $K$}\big\}.
  \end{split}
  \end{equation}
  The function $V_K$ is usually referred to as
  the $\Ht$-balayage of
 $u_0\equiv 1$ on $K$.
 \begin{theorem}\protect{\cite[Thm.s 4.6 and 4.11]{LancoUguz}} \label{thm.ExtConeLancoUguz}
  Let $\Omega\subseteq\R^{1+n}$ be \emph{a bounded open set}, and
  let $\omega_0 = (t_0,x_0)$ be a fixed point of $\de\Omega$.
  For any $r > 0$, we define 
  $$\Omega'_r(\omega_0) :=
  \big\{\omega = (t,x)\in\R^{1+n}\setminus\Omega:\,t\leq t_0,\,\,
  \big(d_X(x,x_0)^4+|t-t_0|^2\big)^{1/4}\leq r\big\},$$
  and we denote by $V_r$ the so-called 
  $\Ht$-balayage of $u_0\equiv 1$ on $\Omega'_r(\omega_0)$, that is,
  \begin{equation} \label{eq.defVrBalayage}
  V_r := V_{\Omega'_r(\omega_0)}.
  \end{equation}
  Then, following assertions are equivalent:
  \begin{itemize}
   \item $\omega_0$ \emph{is not} $\Ht$-regular;
   \item there exists $r > 0$ such that
	$V_r(\omega_0) < 1$;
   \item $V_r(\omega) \to 0$ as $r\to 0^+$.
  \end{itemize}
  On the other hand, if there exist
  real constants
  $M,\rho,\theta > 0$ such that
  $$\big|\big\{x\in \overline{B_X(x_0,M\rho)}:\,(t_0-\rho^2,x)\notin\Omega\big\}\big|
  \geq \theta\,|B_X(x_0,M\rho)|,$$
  then $\omega_0$ is $\Ht$-regular. 
 \end{theorem}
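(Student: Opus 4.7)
The plan is to deduce Theorem \ref{thm.ExtConeLancoUguz} from the abstract axiomatic results \cite[Thms.\,4.6 and 4.11]{LancoUguz}, whose hypotheses are verified for $(\mathcal{H},\Gamma)$ in our setting. The structural ingredients required by that framework are: (a) $(\R^{1+n},H)$ is a $\beta$-harmonic space with the Doob convergence property, as already discussed before the statement; (b) $\mathcal{H}$ admits a global heat kernel $\Gamma$ with the qualitative properties (i)--(viii) of Theorem \ref{ThmC}; (c) $\Gamma$ obeys the two-sided global Gaussian estimates \eqref{eq.GuassianGamma}; (d) the CC distance $d_X$ is globally doubling (Remark \ref{rem.doublingglobale}) and satisfies the segment property (Proposition \ref{prop.SegmentProperty}). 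These four items are exactly the hypotheses used in \cite{LancoUguz}, so the main job is to check each of them against our results, all four being already in place.

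For the equivalence I would follow the argument of \cite[Thm.\,4.6]{LancoUguz}. One first notes that $r\mapsto V_r(\omega_0)$ is monotone non-decreasing (since $\Omega'_r(\omega_0)$ grows with $r$) and bounded above by $1$ by the Weak Maximum Principle. The general theory of $\beta$-harmonic spaces then yields that $\omega_0$ is $\mathcal{H}$-regular if and only if $V_r(\omega_0)=1$ for some (equivalently, every) small $r>0$; this gives the equivalence between the first two assertions. For the equivalence with the third, the forward direction is clear by monotonicity, while the reverse (from ``$V_r(\omega_0)<1$ for some $r$'' to $V_r(\omega_0)\to 0$ as $r\to 0^+$) is a zero--one statement obtained by a rescaling argument that exploits the $\delta_\lambda$-homogeneity of $d_X$, the forward-in-time structure of $\Gamma$ (Theorem \ref{ThmC}(iii)), and the global doubling \eqref{eq.doublingesplicita}.

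For the sufficient condition the idea is to bound $V_r(\omega_0)$ below by a positive structural constant. Setting $E_\rho:=\{x\in\overline{B_X(x_0,M\rho)}:(t_0-\rho^2,x)\notin\Omega\}$, the hypothesis reads $|E_\rho|\geq \theta\,|B_X(x_0,M\rho)|$. Evaluating an appropriate $\mathcal{H}$-potential of the indicator of $E_\rho\times\{t_0-\rho^2\}$ at $(t_0,x_0)$, and using the lower Gaussian bound in \eqref{eq.GuassianGamma} (with $t=\rho^2$) together with global doubling, one obtains
\begin{equation*}
V_{c_0\rho}(\omega_0)\,\geq\, c\int_{E_\rho}\!\gamma(\rho^2,x_0,y)\,\d y\,\geq\,\frac{c(M,\varrho)\,|E_\rho|}{|B_X(x_0,\rho)|}\,\geq\, c(M,\theta,\varrho)\,\frac{|B_X(x_0,M\rho)|}{|B_X(x_0,\rho)|}\,\geq\,\alpha>0,
\end{equation*}
for constants $c_0,\alpha$ independent of $\rho$. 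By the equivalence just proved, this excludes the condition $V_r(\omega_0)\to 0$ as $r\to 0^+$, so $V_r(\omega_0)=1$ for every $r>0$ and hence $\omega_0$ is $\mathcal{H}$-regular.

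The main obstacle is not the introduction of new ideas but the careful verification that the abstract framework of \cite{LancoUguz} applies verbatim: one must translate their parabolic cylinders and balayages into the geometry of $(\R^n,d_X)$, and one must ensure that the \emph{global} character of the Gaussian bounds of Theorem \ref{thm.mainGaussianGamma} is exactly what allows the conclusion to hold on an \emph{arbitrary} bounded $\Omega$, rather than only on sets contained in a fixed compact region as would follow from merely local bounds.
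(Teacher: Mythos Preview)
Your proposal is correct and matches the paper's approach: the paper does not give an independent proof of this theorem but simply states it as a direct instance of \cite[Thms.\,4.6 and 4.11]{LancoUguz}, after noting (in the paragraphs preceding the statement) that the abstract hypotheses of \cite{LancoUguz} are satisfied thanks to Theorem~\ref{ThmC}, the global Gaussian bounds \eqref{eq.GuassianGamma}, the doubling property, and the segment property of Proposition~\ref{prop.SegmentProperty}. Your additional sketch of how the arguments in \cite{LancoUguz} run (monotonicity of $r\mapsto V_r$, the zero--one dichotomy, and the lower-bound computation via the Gaussian estimate) goes beyond what the paper writes, but is consistent with the cited reference and not needed for the paper's purposes.
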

 Another \emph{sufficient} condition for $\Ht$-regularity
 is the following.
 \begin{theorem}\protect{\cite[Theorem 5.1]{Kogoj}}
  Let $\Omega\subseteq\R^{1+n}$ be an open set, and let
  $\omega_0 = (t_0,x_0)\in\de\Omega$ be fixed.
  Moreover, let 
  $\{B_\lambda\}_{0<\lambda<1}$ 
  be a basis of closed neighborhoods of $x_0$ in $\R^n$
  such that 
  $$\text{$B_\lambda\subseteq B_\mu$ if $0<\lambda < \mu\leq 1$}.$$ 
  For every
  $\lambda\in (0,1)$, we define
  $$\Omega_\lambda^c(\omega_0) := \big([t_0-\lambda,t_0]
  \times B_\lambda\big)\setminus\Omega\quad\text{and}\quad
  T_\lambda(\omega_0) := \big\{x\in\R^n:\,(t_0-\lambda,x)\in\Omega_\lambda^c(\omega_0)\big\}.$$
  Then the point $\omega_0$ is $\Ht$-regular if
  $$
  \limsup_{\lambda\,\searrow\,0^+}\int_{T_\lambda(\omega_0)}
  \gamma(\lambda,x_0,\xi)\,\d\xi > 0.$$
 \end{theorem}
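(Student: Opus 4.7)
The plan is to derive the $\mathcal{H}$-regularity of $\omega_0$ by checking, via Theorem~\ref{thm.ExtConeLancoUguz}, that the balayage $V_r(\omega_0)$ from \eqref{eq.defVrBalayage} stays bounded away from $0$ as $r\searrow 0^+$: by the equivalence of the three conditions there, this rules out non-regularity. By hypothesis I fix $c>0$ and a sequence $\lambda_k\searrow 0^+$ with $I_k:=\int_{T_{\lambda_k}(\omega_0)}\gamma(\lambda_k,x_0,\xi)\,\d\xi\geq c$.

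The heart of the argument is the lower balayage estimate on the complement bumper:
$$V_{\Omega_\lambda^c(\omega_0)}(\omega_0)\;\geq\;I_\lambda\qquad\text{for every $\lambda\in(0,1)$.}$$
To prove this, let $v\in\overline{H}(\R^{1+n})$ be admissible, i.e.\ $v\geq 0$ on $\R^{1+n}$ and $v\geq 1$ on $\Omega_\lambda^c(\omega_0)$; replacing $v$ by $\min(v,1)\in\overline{H}(\R^{1+n})$ (which is still admissible, since the minimum of two $\mathcal{H}$-superharmonic functions is $\mathcal{H}$-superharmonic and constants are $\mathcal{H}$-harmonic) I may assume $0\leq v\leq 1$. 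Since $\{t_0-\lambda\}\times T_\lambda(\omega_0)\subseteq\Omega_\lambda^c(\omega_0)$, one has $v(t_0-\lambda,\,\cdot\,)\geq\mathbf{1}_{T_\lambda(\omega_0)}$ pointwise. For each $\varphi\in C_b(\R^n;[0,1])$ with $\varphi\leq\mathbf{1}_{T_\lambda(\omega_0)}$, Theorem~\ref{ThmC}-(vii) gives that
$$h_\varphi(t,x):=\int_{\R^n}\gamma\big(t-(t_0-\lambda),x,\xi\big)\,\varphi(\xi)\,\d\xi$$
is the unique bounded classical solution of the Cauchy problem with initial datum $\varphi$ in $(t_0-\lambda,+\infty)\times\R^n$. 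A minimum principle applied to $v-h_\varphi$ (bounded, $\mathcal{H}$-superharmonic in the slab $(t_0-\lambda,t_0]\times\R^n$, with $\liminf_{t\searrow t_0-\lambda}(v-h_\varphi)(t,x)\geq v(t_0-\lambda,x)-\varphi(x)\geq 0$ thanks to the lower semicontinuity of $v$ and the continuity of $h_\varphi$ up to $t=t_0-\lambda$) yields $v\geq h_\varphi$ in the slab. Hence $v(t_0,x_0)\geq\int_{\R^n}\gamma(\lambda,x_0,\xi)\varphi(\xi)\,\d\xi$; taking the supremum over such $\varphi$ (using inner regularity of Lebesgue measure to approximate $\mathbf{1}_{T_\lambda(\omega_0)}$ from below by continuous functions) gives $v(t_0,x_0)\geq I_\lambda$, and passing to the infimum over $v$ proves the claim.

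The second step is the geometric comparison between $\Omega_\lambda^c(\omega_0)$ and $\Omega'_r(\omega_0)$. Fix $r>0$; since $\{B_\lambda\}$ is a closed neighborhood basis at $x_0$ and $d_X$ is topologically equivalent to the Euclidean distance (Proposition~\ref{prop.SegmentProperty} for the non-trivial direction), for all sufficiently small $\lambda$ one has both $d_X\text{-}\mathrm{diam}(B_\lambda)\leq 2^{-1/4}r$ and $\lambda\leq 2^{-1/2}r^{2}$. Any $(t,x)\in\Omega_\lambda^c(\omega_0)$ then satisfies $d_X(x,x_0)^{4}+|t-t_0|^{2}\leq r^{4}$, giving $\Omega_\lambda^c(\omega_0)\subseteq\Omega'_r(\omega_0)$. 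Monotonicity of the balayage in its target set ($K_1\subseteq K_2\Rightarrow V_{K_1}\leq V_{K_2}$) yields $V_r(\omega_0)\geq V_{\Omega_\lambda^c(\omega_0)}(\omega_0)\geq I_\lambda$; letting $\lambda\to 0^+$ along the sequence from Step~1 gives $V_r(\omega_0)\geq c>0$ for every $r>0$, so $V_r(\omega_0)\not\to 0$ as $r\to 0^+$, and Theorem~\ref{thm.ExtConeLancoUguz} concludes that $\omega_0$ is $\mathcal{H}$-regular.

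\emph{The main obstacle} is the Mehler-type comparison in the balayage bound: showing that a bounded $\mathcal{H}$-superharmonic function $v$ on $\R^{1+n}$ dominates the heat potential of a bounded continuous initial datum in the unbounded slab $(t_0-\lambda,t_0]\times\R^n$. This ultimately rests on a minimum principle for bounded superharmonic functions on unbounded strips, which in our setting (no group structure, possibly unbounded coefficients) is not elementary; it follows from the axiomatic framework of \cite{Kogoj, LancoUguz, LancTralUguz, TralliUguz}, whose hypotheses are fulfilled thanks to the global heat-kernel structure of Theorem~\ref{ThmC} and the global Gaussian estimates of Theorem~\ref{thm.mainGaussianGamma} proved in this paper — this is precisely the input that makes the cited axiomatic results applicable in our non-group context.
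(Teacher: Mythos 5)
The paper does not actually prove this statement: it is quoted verbatim as \cite[Theorem 5.1]{Kogoj}, and the only justification given is the verification (in the lines preceding the theorem) that the global heat kernel $\Gamma$, the global Gaussian bounds, and the segment property for $d_X$ make the axiomatic framework of \cite{Kogoj, LancoUguz, LancTralUguz, TralliUguz} applicable. You, in contrast, attempt a self-contained proof reducing the criterion to the balayage characterization of Theorem \ref{thm.ExtConeLancoUguz}. This is a legitimate and genuinely different route: it buys a concrete, lower-level argument at the price of having to re-derive one of the inputs to Kogoj's theorem, and it clarifies exactly where a parabolic minimum principle on unbounded slabs is needed.

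There is one real gap in your key estimate $V_{\Omega_\lambda^c(\omega_0)}(\omega_0)\geq I_\lambda$. You choose $\varphi\in C_b(\R^n;[0,1])$ with $\varphi\leq\mathbf{1}_{T_\lambda(\omega_0)}$, and then claim that taking the supremum over such $\varphi$ recovers $I_\lambda=\int_{T_\lambda}\gamma(\lambda,x_0,\xi)\,\d\xi$. But $T_\lambda(\omega_0)=\{x\in B_\lambda:(t_0-\lambda,x)\notin\Omega\}$ is a \emph{closed} set (intersection of the closed $B_\lambda$ with the closed complement of $\Omega$), so any continuous $\varphi\leq\mathbf{1}_{T_\lambda}$ must vanish on $\R^n\setminus\mathrm{int}(T_\lambda)$; the supremum of $\int\gamma\varphi$ over such $\varphi$ is $\int_{\mathrm{int}(T_\lambda)}\gamma\,\d\xi$, which can be strictly smaller than $I_\lambda$ — indeed $\Omega$ is an arbitrary bounded open set, so $T_\lambda$ can have empty interior yet positive measure (e.g.\ a fat-Cantor-type complement), making this supremum $0$ while $I_\lambda>0$. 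Inner regularity of Lebesgue measure does not repair this, since compact subsets of $T_\lambda$ can still have empty interior. The fix is to exploit the lower semicontinuity of the truncated $v$: the restriction $v(t_0-\lambda,\cdot)$ is l.s.c., takes values in $[0,1]$, and dominates $\mathbf{1}_{T_\lambda}$; hence it is the increasing pointwise limit of a sequence $\varphi_k\in C_b(\R^n;[0,1])$. Applying your minimum-principle argument to $v-h_{\varphi_k}$ and letting $k\to\infty$ via monotone convergence gives $v(\omega_0)\geq\int\gamma(\lambda,x_0,\xi)\,v(t_0-\lambda,\xi)\,\d\xi\geq I_\lambda$, closing the gap. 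Your acknowledgement that the slab minimum principle itself is nontrivial and must be imported from the axiomatic theory is fair and does not make the argument circular, but it does mean your proof is ultimately a reorganisation of inputs already present in the cited abstract framework rather than a fully elementary alternative.
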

 By making use of the so-called $\Ht$-Wiener function
 (associated with the open set $\Omega$ and the point
 $\omega_0\in\de\Omega$), it is possible to derive
 a \emph{necessary and sufficient} condition
 for $\omega_0$ to be regular.
 \begin{theorem}\protect{\cite[Theorem 5.4]{LancoUguz}} \label{thm.WienerFunct}
  Let $\Omega\subseteq\R^{1+n}$ be a bounded open set, and let
  $\omega_0 = (t_0,x_0)\in\de\Omega$ be fixed. 
  Moreover, given a number $p > 0$ and a sequence $\{r_k\}_{k\in\N}$ converging to 
  $0$ as $k\to+\infty$, we define
  the \emph{$\Ht$-Wiener function} \emph{(}associated with $\Omega$ and $\omega_0$\emph{)}
  as
  \begin{equation} \label{eq.defiFunctionW}
   \mathcal{W}(\omega) := \sum_{k = 1}^{+\infty}\frac{1-V_k(\omega)}{p^k},
   \end{equation}
  where $V_k = V_{r_k}$ and, for every $r > 0$,
  the function $V_r$ is as in \eqref{eq.defVrBalayage}. Then 
  $$\text{$\omega_0$ is $\Ht$-regular if and only if $
  \mathcal{W}(\omega)\to 0$ as $\omega\to\omega_0$}.$$
 \end{theorem}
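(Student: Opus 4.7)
The plan is to deduce this Wiener-type criterion from the axiomatic potential-theoretic framework of \cite{LancoUguz} together with the cone-criterion already established in Theorem \ref{thm.ExtConeLancoUguz}. The key preliminary observation is that, by the properties of $\Gamma$ collected in Theorem \ref{ThmC}, the global Gaussian bounds of Theorem \ref{thm.mainGaussianGamma}, and the segment property of $d_X$ from Proposition \ref{prop.SegmentProperty}, the pair $(\R^{1+n},H)$ is a $\beta$-harmonic space satisfying Doob's convergence property. In particular, every balayage is lower semi-continuous and itself $\Ht$-superharmonic, and the monotonicity $V_{K_1}\le V_{K_2}$ holds whenever $K_1\subseteq K_2$ (since enlarging the target set shrinks the class of admissible $\Ht$-superharmonic majorants and hence increases their infimum). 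Since each $V_k$ satisfies $0\le V_k\le 1$, the series defining $\mathcal{W}$ converges normally (for $p>1$), is bounded by $1/(p-1)$, and $\mathcal{W}$ is upper semi-continuous on $\R^{1+n}$.

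For the \emph{necessity} direction, suppose $\omega_0$ is $\Ht$-regular. By Theorem \ref{thm.ExtConeLancoUguz}, the negation \virg there exists $r>0$ with $V_r(\omega_0)<1$'' cannot hold, so $V_k(\omega_0)=1$ for every $k$. The lower-semicontinuity bound $\liminf_{\omega\to\omega_0}V_k(\omega)\ge V_k(\omega_0)=1$, combined with $V_k\le 1$, forces $V_k(\omega)\to 1$, i.e.\ $1-V_k(\omega)\to 0$, for each fixed $k$. Exchanging limit and sum, which is justified by the uniform domination $|1-V_k|/p^k\le p^{-k}$ (dominated convergence for series), yields $\mathcal{W}(\omega)\to 0$ as $\omega\to\omega_0$.

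Conversely, assume $\omega_0$ is \emph{not} $\Ht$-regular. Theorem \ref{thm.ExtConeLancoUguz} produces $\bar r>0$ with $V_{\bar r}(\omega_0)<1$, and the monotonicity of balayages gives $V_k(\omega_0)\le V_{\bar r}(\omega_0)<1$ for every index $k$ with $r_k\le\bar r$ (there are infinitely many such $k$, since $r_k\to 0$). Hence $\mathcal{W}(\omega_0)\ge (1-V_k(\omega_0))/p^k>0$. The delicate step — and the main obstacle — is to upgrade the strict positivity $\mathcal{W}(\omega_0)>0$ to the statement that $\mathcal{W}(\omega)$ does not tend to $0$ as $\omega\to\omega_0$, since upper semi-continuity of $\mathcal{W}$ by itself only bounds $\limsup$ from above and does not rule out $\liminf\mathcal{W}=0$. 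The standard device, borrowed from \cite{LancoUguz}, is to exploit the $\Ht$-superharmonicity of each $V_k$ via the minimum principle: at an irregular boundary point the balayage $V_k$ stays uniformly below $1$ on a whole approach set (modulo a polar exceptional set), so a definite positive lower bound on $(1-V_k)/p^k$ survives along any sequence $\omega_n\to\omega_0$ avoiding that exceptional set. This is precisely the point at which one invokes the full strength of the axiomatic framework, which in turn is legitimized in our context by the global two-sided Gaussian estimates of Theorem \ref{thm.mainGaussianGamma}.
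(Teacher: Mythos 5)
The paper does not actually \emph{prove} Theorem~\ref{thm.WienerFunct}: it is cited verbatim as \cite[Theorem 5.4]{LancoUguz}. The only thing the paper does here is verify, in the two paragraphs preceding Theorem~\ref{thm.ExtConeLancoUguz}, that the axiomatic hypotheses of \cite{LancoUguz} are met in the present setting (segment property of $d_X$, the structural properties of $\Gamma$ from Theorem~\ref{ThmC}, and the two-sided global Gaussian bounds of Theorem~\ref{thm.mainGaussianGamma}, which make $(\R^{1+n},H)$ a $\beta$-harmonic space with Doob's convergence property). Your opening paragraph correctly reproduces that verification, so far you and the paper are doing the same thing. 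But where the paper then simply invokes the abstract theorem, you attempt to re-derive it from Theorem~\ref{thm.ExtConeLancoUguz}, and that reconstruction is incomplete.

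Your direction ``regular $\Rightarrow\mathcal{W}\to 0$'' is essentially fine as a sketch, given the lower semi-continuity and monotonicity of balayages and $0\le V_k\le 1$: the criterion of Theorem~\ref{thm.ExtConeLancoUguz} forces $V_k(\omega_0)=1$ for all $k$, lower semi-continuity then gives $V_k(\omega)\to 1$ pointwise, and dominated convergence (for $p>1$; the case $p\le 1$ in the paper's statement is never handled, and the domination argument breaks down there) yields $\mathcal{W}(\omega)\to 0$.

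The genuine gap is in the converse, and you identify it yourself. Strict positivity $\mathcal{W}(\omega_0)>0$ does not pass to a lower bound for $\liminf_{\omega\to\omega_0}\mathcal{W}(\omega)$, because $\mathcal{W}$ is only upper semi-continuous. Your proposed repair --- ``$V_k$ stays uniformly below $1$ on a whole approach set modulo a polar exceptional set'' --- is not an argument: polar sets can be topologically dense, so this would at best exclude some sequences $\omega_n\to\omega_0$ rather than all of them, and in any case one must produce a bound that is uniform over $\omega$ near $\omega_0$ and summable in $k$ simultaneously. Your closing remark that one must invoke ``the full strength of the axiomatic framework'' concedes the point: to close this gap you would have to reproduce the proof of \cite[Theorem 5.4]{LancoUguz}, which is exactly what the paper avoids by verifying the hypotheses once and then citing the abstract result. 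As submitted, the proposal establishes one implication and leaves the other open.
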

 Finally, by making explicit use of our global Gaussian estimates
 for $\Gamma$, we can obtain criteria for $\Ht$-regularity which are resemblant to the
 classical results proved by Wiener and Landis for the heat operator
 $\Delta-\de_t$. 
 In order to clearly state these criteria, we first fix some
 notation. \medskip
 
 Given a compact set $K\subseteq\R^{1+n}$, let $V_K$ be the
 $\Ht$-balayage of $u_0\equiv 1$ on $K$ defined in
 \eqref{eq.defHBalayageGeneralK}. By classical results
 of Potential Theory, it is known that $V_K$
 is $\Ht$-superharmonic on $\R^{1+n}$; as a consequence,
 there exists
 a unique positive Radon measure $\mu = \mu_K$ on $\R^{1+n}$ such that
 $$\text{$\Ht V_k = -\mu_K$\,\,in $\mathcal{D}'(\R^{1+n})$\qquad and \qquad
 $\mathrm{supp}(\mu_K) = K$}.$$
 (see, e.g., \cite{NegScor}). We then define
 the \emph{$\Ht$-capacity} of $K$ as follows
 $$\mathcal{C}_{\Ht}(K) := \mu_K(K).$$
 Moreover, if $\mathcal{M}^+(K)$ denotes
 the set of non-negative Radon measures on $\R^{1+n}$ with support
 contained in $K$, we also define
 the \emph{$a$-Gaussian capacity} of $K$ as follows
 $$\mathcal{C}_a(K) := \sup\bigg\{\nu(K):\,
 \nu\in\mathcal{M}^+(K)\,\,\text{and}\,\,
 \int_{K}G_a(t,x;s,y)\,d\mu(s,y)\leq 1\,\,\text{for all $(t,x)\in R^{1+n}$}\bigg\},$$
 where for every $a > 0$ we have used the notation
 \begin{equation} \label{eq.aGaussianFun}
 G_a(t,x;s,y) := \begin{cases}
 0, & \text{if $t\leq s$}, \\[0.1cm]
 \displaystyle
 \frac{1}{|B_X(x,\sqrt{t-s})|}\,\exp\bigg(-a\,
 \frac{d_X^2(x,y)}{t-s}\bigg), & \text{if $t > s$}.
 \end{cases}
 \end{equation}
 Notice that, using \eqref{eq.aGaussianFun}, our Gaussian estimates 
 \eqref{eq.GuassianGamma} reads as
 $$\frac{1}{\varrho}\,G_{\varrho}(t,x;s,y)\leq
 \Gamma(t,x;s,y)\leq \varrho\,G_{1/\varrho}(t,x;s,y) \qquad
 (\text{for all $(t,x),(s,y)\in\R^{1+n}$}).$$
 Here is a \alt96 Wiener-type' test for $\Ht$-regularity.
 \begin{theorem}\protect{\cite[Theorem 1.1]{LancTralUguz}} \label{thm.WienerTypeLancoTralli}
   Let $\Omega\subseteq\R^{1+n}$ be a bounded open set, and let
   $\omega_0 = (t_0,x_0)\in\de\Omega$.
   For every fixed $\lambda\in(0,1)$ and
   every $h,k\in\N$, we define
   \begin{align*}
   \Omega_k^h(\omega_0,\lambda) & :=
   \bigg\{\omega = (t,x)\in\R^{1+n}\setminus\Omega:\,\lambda^{k+1}\leq t_0-t\leq\lambda^k, \\[0.1cm]
   & \qquad\quad \frac{1}{\lambda^{h-1}}\leq\exp\bigg(\frac{d_X^2(x_0,x)}{t_0-t}\bigg)
   \leq \frac{1}{\lambda^h},\,\,
   \big(d_X(x,x_0)^4+|t-t_0|^2\big)^{1/4}\leq \sqrt{\lambda}\bigg\}.
   \end{align*}
   Then, if $\varrho > 0$ is as in \eqref{eq.GuassianGamma},
   the following facts hold.
   \begin{itemize}
    \item if there exist $0<a\leq 1/\varrho$ and $b > \varrho$ such that
    $$\sum_{h,k = 1}^{+\infty}\frac{\mathcal{C}_{a}\big(
    \Omega_k^h(\omega_0,\lambda)\big)}{\big|
    B_X(x_0,\lambda^{k/2})\big|}\,\lambda^{bh} = +\infty,$$
    then the point $\omega_0$ is $\Ht$-regular.
    \item If the point $\omega_0$ is $\Ht$-regular, then
    $$\sum_{h,k = 1}^{+\infty}\frac{\mathcal{C}_{b}\big(
    \Omega_k^h(\omega_0,\lambda)\big)}{\big|
    B_X(x_0,\lambda^{k/2})\big|}\,\lambda^{ah} = +\infty,$$
    for every $0<a\leq1/\varrho$ and $b \geq \varrho$.
   \end{itemize}
 \end{theorem}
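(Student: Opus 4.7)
The plan is to deduce Theorem \ref{thm.WienerTypeLancoTralli} from the abstract Wiener-type test of \cite[Theorem 1.1]{LancTralUguz}, by verifying that our operator $\Ht$ fits into the axiomatic framework developed there. The structural ingredients required by that abstract result are: (a) a \emph{global} two-sided Gaussian estimate of the form $c_1\,G_{a_1}(t,x;s,y)\leq\Gamma(t,x;s,y)\leq c_2\,G_{a_2}(t,x;s,y)$ with $G_a$ as in \eqref{eq.aGaussianFun}; (b) the global doubling property for $x\mapsto|B_X(x,r)|$; (c) the segment property for the control distance $d_X$; and (d) the fact that $(\R^{1+n},H)$ is a $\beta$-harmonic space satisfying Doob's convergence property. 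All four ingredients have already been established in our setting: (a) is precisely Theorem \ref{thm.mainGaussianGamma}-(i), which reads $\varrho^{-1}G_{\varrho}\leq\Gamma\leq\varrho\,G_{1/\varrho}$ in the notation of \eqref{eq.aGaussianFun}; (b) is Remark \ref{rem.doublingglobale}; (c) is Proposition \ref{prop.SegmentProperty}; and (d) follows from the properties of $\Gamma$ listed in Theorem \ref{ThmC} together with the axiomatic setup recalled before Theorem \ref{thm.ExtConeLancoUguz}.

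First I would identify, in our concrete setting, the role played by each of the parameters and sets appearing in the abstract statement. In particular, the sets $\Omega_k^h(\omega_0,\lambda)$ can be defined exactly as in the statement, since their definition only depends on $d_X$ and on the Euclidean distance in the time variable; the notions of $a$-Gaussian capacity $\mathcal{C}_a$ and $\Ht$-capacity $\mathcal{C}_{\Ht}$ carry over verbatim using $\Gamma$ and $G_a$ built from our own $d_X$ and $|B_X(\cdot,\cdot)|$.

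The core mechanism of the argument is the divergence-of-series characterization of $\Ht$-regularity. For the sufficient direction, one uses the upper Gaussian bound on $\Gamma$ to compare the $\Ht$-balayage $V_r$ of $\Omega_r'(\omega_0)$ near $\omega_0$ with a superposition of contributions supported on the annular pieces $\Omega_k^h(\omega_0,\lambda)$; through the Riesz representation relating $V_K$ to $\mathcal{C}_{\Ht}(K)$ and $\Gamma$-potentials to $\mathcal{C}_a$-capacities, the assumed divergence of the weighted series forces $V_r(\omega_0)=1$ for all small $r$, and the equivalence in Theorem \ref{thm.ExtConeLancoUguz} (or \ref{thm.WienerFunct}) then yields $\Ht$-regularity of $\omega_0$. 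For the necessary direction, one reverses roles: from $\Ht$-regularity and the lower Gaussian bound one extracts, on each $\Omega_k^h(\omega_0,\lambda)$, Radon measures whose $\Gamma$-potentials are uniformly bounded, and infers divergence of the dual series with the opposite Gaussian parameter.

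The main technical obstacle is the careful bookkeeping of the Gaussian constants, which explains the asymmetry between the ranges $0<a\leq 1/\varrho$, $b>\varrho$ in the sufficient part and $0<a\leq 1/\varrho$, $b\geq\varrho$ in the necessary part: this is a direct shadow of the two-sided bound $\varrho^{-1}G_\varrho\leq\Gamma\leq\varrho\,G_{1/\varrho}$, where the upper estimate (with parameter $1/\varrho$) is what controls $\Gamma$-potentials from above and the lower one (with parameter $\varrho$) is what controls them from below. Once this matching is set up, the remainder of the proof is a mechanical transcription of \cite[Theorem 1.1]{LancTralUguz} into our setting, requiring no further substantive input beyond the verified axioms (a)--(d).
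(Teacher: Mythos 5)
Your proposal is correct and takes essentially the same route as the paper: the paper's ``proof'' of Theorem \ref{thm.WienerTypeLancoTralli} consists precisely of observing that the global two-sided Gaussian bound \eqref{eq.GuassianGamma}, the properties of $\Gamma$ from Theorem \ref{ThmC}, the segment property of Proposition \ref{prop.SegmentProperty}, and the $\beta$-harmonic space structure with Doob's convergence property together verify the axiomatic hypotheses of \cite[Theorem~1.1]{LancTralUguz}, and then citing that abstract result. Your extra paragraph sketching the internal mechanism of the abstract Wiener test (balayage--capacity comparison via the two Gaussian parameters) is accurate commentary but is not required, since the paper (like your proposal) treats \cite{LancTralUguz} as a black box once the axioms are checked.
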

 Finally, a `Landis-type' condition for $\Ht$-regularity
 is given by the following theorem.
 \begin{theorem}\protect{\cite[Theorem 1.3]{TralliUguz}} \label{thm.LandisTralliUguz}
  Let $\Omega\subseteq\R^{1+n}$ be a bounded open set, and let
   $\omega_0 = (t_0,x_0)\in\de\Omega$.
   For every fixed $\lambda\in (0,1)$ and every $k\in\N$, we consider the set
   $$\Omega_{k}^c(\omega_0) :=
   \bigg\{\omega = (t,x)\in\R^{1+n}\setminus\Omega:\,
   \frac{1}{\lambda^{k\log(k)}}\leq \Gamma(t_0,x_0;t,x)\leq 
   \frac{1}{\lambda^{(k+1)\log(k+1)}}\bigg\}\cup\{(t_0,x_0)\},$$
   where $\Gamma$ is the global heat kernel of $\Ht$. Then 
   $\omega_0$ is $\Ht$-regular if and only if
   $$\sum_{k = 1}^{+\infty}V_{\Omega_{k}^c(\omega_0)}(\omega_0) = +\infty,$$
   where $V_{\Omega_{k}^c(\omega_0)}$ is the $\Ht$-balayage of
   $u_0\equiv 1$ on $\Omega_{k}^c(\omega_0)$, see \eqref{eq.defHBalayageGeneralK}.
 \end{theorem}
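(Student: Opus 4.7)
The plan is to derive this theorem as a direct application of the abstract Landis-type test proved in \cite[Theorem 1.3]{TralliUguz}, by verifying that the setting $(\R^{1+n},H,\Gamma,d_X)$ of our paper satisfies all of the axiomatic hypotheses required there. No new analytic work on the operator $\Ht$ is needed beyond collecting results already established.

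First, I would recall the abstract framework of \cite{TralliUguz}: one needs a Bauer $\beta$-harmonic space satisfying the Doob convergence property, endowed with a doubling quasi-metric enjoying the segment property, and admitting a global heat kernel that satisfies two-sided Gaussian estimates with respect to the metric balls. Then I would verify these ingredients in our setting, one by one:
\begin{itemize}
 \item[(a)] $(\R^{1+n},H)$ is a $\beta$-harmonic space with the Doob convergence property; this has already been observed at the beginning of Section \ref{sec.appDirichlet} as a consequence of the $C^\infty$-hypoellipticity of $\Ht$, the properties of $\Gamma$ listed in Theorem \ref{ThmC}, and the abstract results of \cite{LancoUguz}.
 \item[(b)] The CC distance $d_X$ satisfies the segment property by Proposition \ref{prop.SegmentProperty}, and the $d_X$-balls enjoy the global doubling property \eqref{eq.doublingesplicita}.
 \item[(c)] The kernel $\Gamma(t,x;s,y) = \gamma(t-s,x,y)$ is the global heat kernel of $\Ht$ in the sense of Theorem \ref{ThmC}, and satisfies the two-sided Gaussian bounds \eqref{eq.GuassianGamma} of Theorem \ref{thm.mainGaussianGamma}, which are exactly in the form used in \cite{TralliUguz} (with the functions $G_a$ of \eqref{eq.aGaussianFun}).
\end{itemize}

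Once these hypotheses are verified, the set $\Omega_k^c(\omega_0)$ in our statement coincides with the corresponding level set constructed in \cite[Theorem 1.3]{TralliUguz}, because it is defined purely in terms of $\Gamma$ and the complement of $\Omega$. Moreover, the balayage $V_{\Omega_k^c(\omega_0)}$ in \eqref{eq.defHBalayageGeneralK} is precisely the one used in that abstract statement. Therefore the conclusion — that $\omega_0$ is $\Ht$-regular if and only if $\sum_k V_{\Omega_k^c(\omega_0)}(\omega_0) = +\infty$ — is obtained by a direct invocation of \cite[Theorem 1.3]{TralliUguz}.

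The main (in fact the only) delicate point is the matching of the technical assumptions: one must check that the specific form of the Gaussian estimates used in \cite{TralliUguz} is indeed satisfied, in particular the role played by the doubling constants and the constant $\varrho$ in \eqref{eq.GuassianGamma}. Since all the constants that appear in \cite{TralliUguz} are absorbed into the abstract parameters of the theory, and since the bounds \eqref{eq.GuassianGamma} together with Proposition \ref{Remark equivalent gaussian} provide the symmetric two-sided Gaussian control needed, this verification is routine. Hence the theorem follows.
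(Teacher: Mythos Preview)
Your proposal is correct and matches the paper's own treatment: the paper does not give a separate proof of this theorem but simply states it as a direct application of \cite[Theorem 1.3]{TralliUguz}, after having verified (via Proposition \ref{prop.SegmentProperty}, Theorem \ref{ThmC}, and the global Gaussian bounds \eqref{eq.GuassianGamma}) that the abstract hypotheses of \cite{Kogoj, LancoUguz, LancTralUguz, TralliUguz} are satisfied in this setting. Your checklist of ingredients (a)--(c) is exactly the content of the paragraph in Section \ref{sec.appDirichlet} preceding the statement of the theorem.
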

\section{Scale-invariant Harnack inequality for $\mathcal{H}$\label{sec.Harnack}}
In this last section we prove a scale-invariant Harnack inequality for
non-negative solutions of $\mathcal{H}u=0$. This fact easily follows, via the
lifting procedure, from the analogous result proved on Carnot groups in
\cite[Corollary 4.5]{BLUpaper}. It is however a result which is worthwhile to
be pointed out. \medskip

Given any point $\omega_{0}=(t_{0},x_{0})\in(0,+\infty)\times\mathbb{R}^{n}$,
any number $r>0$, we define
$$
C(\omega_{0},r):=
 \big\{(t,x)\in\mathbb{R}^{1+n}:\,d_{X}(x,x_{0})<r,\,\,|t-t_{0}|<r^{2}\big\}.
$$
Furthermore, for every $\lambda\in(0,1/2)$, we set
$$
S_{\lambda}(\omega_{0},r) := 
 \big\{(t,x)\in\mathbb{R}^{1+n}:\,d_{X}(x,x_{0})<(1-\lambda)r,
 \,\,\lambda r^{2}<t_{0}-t<(1-\lambda)r^{2}\big\}.
$$
We are ready to state our result.
\begin{theorem} \label{thm.Harnack}
For every $h,\,k=0,1,2,...$ and every fixed $\lambda
\in(0,1/2)$, it is possible to find a positive constant 
$\mathbf{\nu}=\mathbf{\nu}_{h,k,\lambda}>0$ such that, for every 
$\omega_{0}=(t_{0},x_{0})\in(0,+\infty)\times\mathbb{R}^{n}$, every $r>0$, and every nonnegative
function $u\in C^{2}(C(\omega_{0},r))$ satisfying $\mathcal{H}u=0$ on
$C(\omega_{0},r),$
\begin{equation} \label{eq.Harnack}%
\sup_{S_{\lambda}(\omega_{0},r)}\left\vert X_{i_{1}}\cdots X_{i_{h}}%
(\partial_{t})^{k}u\right\vert \leq\mathbf{\nu}\,r^{-(h+2k)}\,u(\omega_{0}),
\end{equation}
for every $i_{1},\ldots,i_{h}\in\{1,\ldots,m\}$.
\end{theorem}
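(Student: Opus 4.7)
The plan is to reduce Theorem \ref{thm.Harnack} to the corresponding scale-invariant parabolic Harnack inequality already available on the lifted Carnot group $\mathbb{G}$ from \cite[Corollary 4.5]{BLUpaper}, by lifting a non-negative solution of $\mathcal{H}u = 0$ to a non-negative solution of $\mathcal{H}_{\mathbb{G}}\hat u = 0$ and then projecting the resulting inequality back to $\mathbb{R}^{1+n}$ through the properties of the CC distance collected in Proposition \ref{Prop properties distances}. If assumption (H3) fails we are already in a Carnot-group framework and the result follows verbatim from the cited corollary; hence I would assume (H3) throughout.

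Given $u\ge 0$ solving $\mathcal{H}u=0$ on $C(\omega_0,r)$, I would set $\hat{u}(t,x,\xi):=u(t,x)$ and $\hat{\omega}_0:=(t_0,x_0,0)\in\mathbb{R}^{1+N}$, and consider the $\mathcal{Z}$-cylinder
$$
\hat C(\hat\omega_0,r):=\big\{(t,x,\xi):\,d_{\mathcal{Z}}((x,\xi),(x_0,0))<r,\;|t-t_0|<r^2\big\}.
$$
The inequality $d_X(x,x_0)\le d_{\mathcal{Z}}((x,\xi),(x_0,0))$ from Proposition \ref{Prop properties distances}(iii) shows that $\hat C(\hat\omega_0,r)$ projects onto a subset of $C(\omega_0,r)$, so $\hat u\ge 0$ is well defined on $\hat C(\hat\omega_0,r)$, and by the lifting identity \eqref{eq.liftinHHG} it satisfies $\mathcal{H}_{\mathbb{G}}\hat u=0$ there. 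Moreover, since $\hat u$ does not depend on $\xi$ and the vector fields $R_j$ in \eqref{eq.liftinZjXj} act only on the $\xi$-variables, one obtains inductively
$$
Z_{i_1}\cdots Z_{i_h}(\partial_t)^k\hat u\,(t,x,\xi)
=\big(X_{i_1}\cdots X_{i_h}(\partial_t)^k u\big)(t,x),
\qquad \hat u(\hat\omega_0)=u(\omega_0).
$$

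At this point I would invoke \cite[Corollary 4.5]{BLUpaper} for $\mathcal{H}_{\mathbb{G}}$ on the cylinder $\hat C(\hat\omega_0,r)$: for every $h,k$ and $\lambda\in(0,1/2)$ there is $\nu=\nu_{h,k,\lambda}$ such that, on the inner $\mathcal{Z}$-cylinder
$$
\hat S_\lambda(\hat\omega_0,r):=\big\{(t,x,\xi):\,d_{\mathcal{Z}}((x,\xi),(x_0,0))<(1-\lambda)r,\;\lambda r^2<t_0-t<(1-\lambda)r^2\big\},
$$
one has $\sup_{\hat S_\lambda}\big|Z_{i_1}\cdots Z_{i_h}(\partial_t)^k\hat u\big|\le \nu\,r^{-(h+2k)}\hat u(\hat\omega_0)$. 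The matching of inner cylinders is the last ingredient: given any $(t,x)\in S_\lambda(\omega_0,r)$, by the surjectivity statement \eqref{item.pjectionBY} (applied with $\rho=(1-\lambda)r$) I can select $\xi\in\mathbb{R}^p$ with $(x,\xi)\in B_{\mathcal{Z}}((x_0,0),(1-\lambda)r)$, so $(t,x,\xi)\in \hat S_\lambda(\hat\omega_0,r)$ and the group inequality at $(t,x,\xi)$ translates, via the identities displayed above, into
$$
\big|X_{i_1}\cdots X_{i_h}(\partial_t)^k u(t,x)\big|\le \nu\,r^{-(h+2k)}\,u(\omega_0).
$$
Taking the supremum over $(t,x)\in S_\lambda(\omega_0,r)$ gives \eqref{eq.Harnack}. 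The only delicate point is to verify that the version of the Carnot-group Harnack inequality that I quote really controls mixed horizontal-and-time derivatives of arbitrary order; if the bare Harnack inequality is preferred, the derivative bound can be recovered by first applying Harnack at a slightly larger scale and then differentiating the Poisson-type representation of $\hat u$ using the Gaussian estimates \eqref{eq.Gaussestder} for $\gamma_{\mathbb{G}}$, which is the main (but routine) obstacle of the argument.
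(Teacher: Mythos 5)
Your proposal is correct and follows essentially the same route as the paper: lift $u$ to $\widehat{u}(t,x,\xi):=u(t,x)$ on the Carnot group $\mathbb{G}$, observe via \eqref{eq.liftinHHG} and \eqref{eq.liftinZjXj} that $\widehat{u}$ is non-negative and $\mathcal{H}_{\mathbb{G}}$-caloric on the lifted cylinder and that $Z$-derivatives of $\widehat{u}$ project to $X$-derivatives of $u$, invoke the scale-invariant Harnack inequality of \cite[Corollary 4.5]{BLUpaper}, and transfer the result back using the surjectivity of $\pi_n$ on $\mathcal{Z}$-balls (Proposition \ref{Prop properties distances}(iii)). Your closing caveat is unnecessary: \cite[Corollary 4.5]{BLUpaper} already contains the bound on mixed $Z$- and $\partial_t$-derivatives of arbitrary order, which is exactly what the paper invokes, so no detour through the Poisson representation and \eqref{eq.Gaussestder} is needed.
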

\begin{proof}
Letting $v_{0}:=(x_{0},0)\in\mathbb{R}^{N}$ and $\widehat{\omega}_{0}%
:=(t_{0},v_{0})\in\mathbb{R}^{1+N}$, we define
\begin{align*}
&  \widehat{C}(\widehat{\omega}_{0},r):=\big\{(t,v)\in\mathbb{R}^{1+N}:
\,d_{\mathcal{Z}}(v,v_{0})<r,\,\,|t-t_{0}|<r^{2}\big\}
\qquad\text{and}\\[0.2cm]
&  \qquad\widehat{S}_{\lambda}(\widehat{\omega}_{0},r):=\big\{
(t,v)\in\mathbb{R}^{1+N}:\,{d}_{\mathcal{Z}}(v,v_{0})<(1-\lambda)r,\,\,\lambda
r^{2}<t_{0}-t<(1-\lambda)r^{2}\big\}  .
\end{align*}
Let then $u\in C^{2}(C(\omega_{0},r))$ be any non-negative function satisfying
of $\mathcal{H}u=0$ on $C(\omega_{0},r)$. Denoting by 
$\pi_{n}:\mathbb{R}^{N}\rightarrow\mathbb{R}^{n}$ the canonical 
projection of $\mathbb{R}^{N}$
onto $\mathbb{R}^{n}$, we set
$$
\widehat{u}(t,v):=u\big(t,\pi_{n}(v)\big)  \qquad(v\in\mathbb{R}^{N}).
$$
Since $B_{\mathcal{Z}}(v_{0},r)\subseteq\pi_{n}^{-1}\big(B_{X}(x_{0},r)\big)$ 
(see Proposition \ref{Prop properties distances}-(iii)), we have
$$
\widehat{u}\in C^{2}\big(\widehat{C}(\widehat{\omega}_{0},r)\big)  .
$$
Moreover, since $u\geq0$ and $\mathcal{H}u=0$ on $C(\omega_{0},r)$, from the
lifting property \eqref{eq.liftinHHG} we derive that
$$
\text{$\widehat{u}\geq0$\quad and \quad$\mathcal{H}_{\mathbb{G}}\widehat{u}=0$
\qquad on $\widehat{C}(\widehat{\omega}_{0},r)$}.
$$
Putting together these facts, we are entitled to 
apply \cite[Corollary 4.5]{BLUpaper}, obtaining
\begin{equation} \label{eq.HarnaksuG}%
\sup_{\widehat{S}_{\lambda}(\widehat{\omega}_{0},r)}\big\vert Z_{i_{1}}\cdots
Z_{i_{h}}(\partial_{t})^{k}\widehat{u}\big\vert \leq
\nu\,r^{-(h+2k)}\,\widehat{u}(t_{0},v_{0}), 
\end{equation}
where $\mathbf{\nu}>0$ is an absolute constant only depending on $h,k$ and
$\lambda$. We now claim that the above \eqref{eq.HarnaksuG} is precisely the
desired \eqref{eq.Harnack}. In fact, by the very definition of $\widehat{u}$,
we have
\begin{equation} \label{eq.vut0x0}%
\widehat{u}(t_{0},v_{0})=u(t_{0},x_{0})=u(\omega_{0}); 
\end{equation}
moreover, by repeatedly exploiting \eqref{eq.liftinZjXj}, we get
\begin{align*}
&  Z_{i_{1}}\cdots Z_{i_{h}}(\partial_{t})^{k}\widehat{u}(t,v)=
(\partial_{t})^{k}\Big(Z_{i_{1}}\cdots Z_{i_{h}}\big(v\mapsto u(t,\pi_{n}(v))\big)\Big) \\[0.2cm]
&  \quad=(\partial_{t})^{k}
  \Big(Z_{i_{1}}\cdots Z_{i_{h-1}}\big(v\mapsto
(X_{i_{h}}u)(t,\pi_{n}(v))\big)\Big) \\[0.2cm]
&  \quad=\ldots=\big(  (\partial_{t})^{k}X_{i_{1}}\cdots X_{i_{h}}u\big)
(t,\pi_{n}(v))\qquad\text{for all $(t,v)\in\widehat{C}(\widehat{\omega}_{0},r)$}.
\end{align*}
From this, taking into account that $\pi_{n}\left(  B_{\mathcal{Z}}%
(v_{0},(1-\lambda)r)\right)  =B_{X}(x_{0},(1-\lambda)r)$, we readily obtain
\begin{equation} \label{eq.supvsupuequal}%
\sup_{\widehat{S}_{\lambda}(\widehat{\omega}_{0},r)}\big\vert Z_{i_{1}}\cdots
Z_{i_{h}}(\partial_{t})^{k}\widehat{u}\big\vert = 
\sup_{S_{\lambda}(\omega_{0},r)}\big\vert X_{i_{1}}\cdots X_{i_{h}}
(\partial_{t})^{k}u\big\vert . 
\end{equation}
By combining \eqref{eq.HarnaksuG}, \eqref{eq.vut0x0} and
\eqref{eq.supvsupuequal}, we finally derive \eqref{eq.Harnack}, with an
absolute constant $\mathbf{\nu}>0$ which de\-pends on the chosen $h,\,k$ and
$\lambda$ (but not on $\omega_{0},\,r$ nor $u$). This ends the proof.
\end{proof}

\end{document}